\newcommand{\CC}{{\rm\bf C}}
\newcommand{\RR}{{\rm\bf R}}
\newcommand{\QQ}{{\rm\bf Q}}
\newcommand{\ZZ}{{\rm\bf Z}}
\newcommand{\FF}{{\rm\bf F}}
\newcommand{\Adeles}{{\rm\bf A}}
\newcommand{\OO}{\mathcal {O}}
\DeclareMathOperator{\absNorm}{\mathfrak{N}}
\DeclareMathOperator{\diag}{\mathrm{diag}}
\DeclareMathOperator{\Gm}{\mathrm {{\bf G}_m}}
\DeclareMathOperator{\GL}{\mathrm{GL}}
\DeclareMathOperator{\End}{\mathrm {End}}
\DeclareMathOperator{\Hom}{\mathrm {Hom}}
\DeclareMathOperator{\kernel}{\mathrm {ker}}
\DeclareMathOperator{\rang}{\mathrm {rank}}
\DeclareMathOperator{\res}{\mathrm{Res}}
\DeclareMathOperator{\Ad}{\mathrm{Ad}}
\DeclareMathOperator{\ad}{\mathrm{ad}}
\DeclareMathOperator{\der}{\mathrm{der}}
\DeclareMathOperator{\Lie}{\mathrm{Lie}}
\newcommand{\zentrum}{\mathscr{C}}
\newcommand{\lieg}{{\mathfrak {g}}}
\newcommand{\liek}{{\mathfrak {k}}}
\newcommand{\lies}{{\mathfrak {s}}}
\newcommand{\liep}{{\mathfrak {p}}}
\newcommand{\liegl}{{\mathfrak {gl}}}
\DeclareMathOperator*{\Otimes}{\ensuremath{\otimes}}
\newcommand{\absnorm}[1]{\ensuremath{\left|\!\left|{#1}\right|\!\right|}}
\newcommand{\adots}{\ensuremath{
\put(-3,-6){\normalsize$\,$}
\put(-2.2,-3.5){\normalsize$\cdot$}
\cdot
\put(-1.0,3.5){\normalsize$\cdot$}
\put(0.0,6){\normalsize$\,$}
}}
\theoremstyle{plain}
\newtheorem{theorem}{Theorem}[section]
\newtheorem{lemma}[theorem]{Lemma}
\newtheorem{corollary}[theorem]{Corollary}
\newtheorem{proposition}[theorem]{Proposition}
\begin{document}

\title{On $p$-adic $L$-functions for $\GL(n)\times\GL(n-1)$ over totally real fields\footnote{2010 MSC: 11F67, 11F66, 11R23}}
\author{Fabian Januszewski\footnote{Part of this research was conducted while the author was a guest at the Institut Poincar\'e Centre Emile Borel in Paris and the author also acknowledges financial support from the German Academy of Sciences Leopoldina grant no. LDPS 2009-23 during his stay at the University of California at Los Angeles.}}

\maketitle

\begin{abstract}
We refine and extend previous constructions of $p$-adic $L$-functions for Rankin-Selberg convolutions on $\GL(n)\times\GL(n-1)$ to the case of cuspidal regular algebraic representations of finite slope over totally real fields without any restrictions on class numbers or the residual characteristic. We also prove an intrinsic functional equation for these $p$-adic $L$-functions, which will be of interest in further study of their arithmetic properties.
\end{abstract}

{\small\tableofcontents}

\markboth{Fabian Januszewski}{Modular symbols for reductive groups and $p$-adic Rankin-Selberg convolutions}

\section*{Introduction}\label{sec:introduction}

In this paper we study the problem of $\mathfrak{p}$-adic interpolation of the special values of twisted Rankin-Selberg $L$-functions of irreducible cuspidal automorphic representations $\pi$ and $\sigma$ on $\GL_n$ and $\GL_{n-1}$ for $n\geq 2$ in the sense of \citep{jpss1983,cogdellpiatetskishapiro2004}. This is in some sense a continuation of the previous works \citep{birch1971,manin1972,mazur1972,mazurswinnertondyer1974,schmidt1993,kazhdanmazurschmidt2000,schmidt2001,kastenschmidt2008,januszewski2009}.

We review the general problem of $p$-adic interpolation and extend the results of \citep{januszewski2009}. To be more precise, let $\pi$ and $\sigma$ be irreducible cuspidal automorphic representations of $\GL_n(\Adeles_k)$ and $\GL_{n-1}(\Adeles_k)$, where $\Adeles_k$ denotes the ad\`ele ring of a totally real number field $k$. We fix a finite place $\mathfrak{p}$ of $k$.

Assuming that $\pi$ and $\sigma$ are cuspidal irreducible regular algebraic, i.e.\ occur in the cohomology of arithmetic groups, and furthermore that the pair of representations $(\pi,\sigma)$ is of finite slope at $\mathfrak{p}$, we show the existence of a $\mathfrak{p}$-adic vector-valued distribution $\mu=(\mu^\nu)_\nu$ on the ray class group $\mathcal{C}l(\mathfrak{p}^{\infty})$, which is tempered and whose order is directly related to the slope of the pair $(\pi,\sigma)$. In the slope $0$ case, i.e.\ when the pair $(\pi,\sigma)$ is ordinary at $\mathfrak{p}$, then each $\mu^\nu$ is indeed a $\mathfrak{p}$-adic measure (cf.\ Theorems \ref{thm:distribution} and \ref{thm:boundedness}). In any case $\mu$ has the property that for certain \lq\lq{}periods\rq\rq{} $\Omega^{{\rm sign}(\chi)(-1)^{\nu}}(\frac{1}{2}+\nu)\in\CC$ we have
$$
\int\limits_{\mathcal{C}l(\mathfrak{p}^{\infty})}\chi d\mu^\nu=\Omega^{{\rm sign}(\chi)(-1)^{\nu}}(\frac{1}{2}+\nu)\cdot\hat{\kappa}(\mathfrak{f})\cdot G(\chi)^{\frac{n(n-1)}{2}}\cdot L^{(\mathfrak{p})}(\frac{1}{2}+\nu,(\pi\times\sigma)\otimes\chi)
$$
for any Hecke character $\chi$ of finite order with non-trivial conductor $\mathfrak{f}\mid \mathfrak{p}^{\infty}$, and any integer $\nu$ such that $\frac{1}{2}+\nu$ is critical for $L(s,\pi\times\sigma)$ in the sense of \citep{deligne1979}. Strictly speaking $\mu$ and the above interpolation formula depend on an (ordered) choice of Hecke roots for $\pi$ and $\sigma$ at $\mathfrak{p}$. See Theorem \ref{thm:interpolation} below for the precise statement and the definition of the quantities involved.

The periods $\Omega^{{\rm sign}(\chi)}(\frac{1}{2}+\nu)$ were conjectured to be non-zero for a long time, and recently Sun gave a proof of the non-vanishing for general $n\geq 2$ (cf.\ \cite{januszewskisunpre}). Previous results were \cite{kastenschmidt2008} for $n=3$ and Mazur \cite{schmidt1993} for $n=2$. Thanks to Sun's breakthrough our results are unconditional.

Even over $\QQ$ and $n=3$ our result is new, as only the construction of $p$-adic $L$-functions for representations supporting cohomology with trivial coefficients had been carried out so far, cf.\ \cite{schmidt1993,januszewski2009}.

In the general case our construction overcomes the restrictions on class numbers faced in \citep{januszewski2009}. We point out that we have no restriction on the residue characteristic of $\mathfrak{p}$. In particular our results cover characteristic $2$ as well.

We give a purely cohomological construction of the distribution and proof a functional equation for the resulting multi-valued $\mathfrak{p}$-adic $L$-function (cf.\ Theorem \ref{thm:functionaleq} below). The proof is however more involved than in the classical case $n=2$, which was the only previously known case in our setting.

The different components of this $\mathfrak{p}$-adic $L$-function should be mutually related by a Manin's trick type argument. In our case however the involved finite-dimensional representations (i.e.\ the coefficients of cohomology) are much more involved, and up to present we were unable to make Manin's argument work in this setting.

Our method readily generalizes to $p$-adic instead of merely $\mathfrak{p}$-adic interpolation, once the evaluation of the modified local zeta-integral at $\mathfrak{p}$ at the trivial character is known. This is the case for $n=2$ classically and for $n=3$ due to yet unpublished work by Denis Ungemach. However we prefer to fix a single place, as this avoids leaving out infinitely many interpolation values where the precise interpolation formula is still unknown, and also simplifies notation.

Although we don't use automorphic symbols here explicitly, this work was influenced by \citep{dimitrov2011}, where the case $n=2$ is treated with respect to additional non-abelian variables and thereby and a strong connection between $\mathfrak{p}$-adic $L$-functions and the corresponding Galois deformations is established.

In a future paper we will apply the results obtained here to the construction of $p$-adic $L$-functions for families of $p$-ordinary automorphic representations on $\GL(n)\times \GL(n-1)$ in the sense of Hida (cf.\ \cite{hida1995}).

\paragraph{{\bf Acknowledgements.}}
The author thanks Binyong Sun for helpful remarks, and Mladen Dimitrov for having provided him with a preliminary version of his article \citep{dimitrov2011}. The author is also grateful to the Institut Poincar\'e in Paris and the number theory group at UCLA for their hospitality and good working conditions. Finally the author thanks Haruzo Hida for his continuous interest in this work.

\section*{{\rm\em Notation}}

Let $G$ be a topological group. Then $G^0$ denotes the connected component of the unit $1\in G$, the same notation applies to algebraic groups with respect to the Zariski topology.

Denote by $G^{\der}$ the commutator group of a linear algebraic group $G$. Here and in the sequel $\Lie(G)$ is the Lie algebra of $G$. The differential of a morphism $f:G\to H$ of linear algebraic (or of Lie) groups is denoted by $L(f)$. $\mathscr R(G)$ (resp.\ $\mathscr R_{\rm u}(G)$) is the (unipotent) radical of $G^0$. If $G$ is defined over a number field $k$, we usually $\mathscr X$ denotes a (connected) symmetric space for $G(k_\RR)$, $k_\RR:=k\otimes_\QQ\RR$, and reserve supscript \lq$\ad$\rq{} to denote $\mathscr X^{\ad}:=\mathscr X/S(\RR)^0$ for the maximal $\QQ$-split torus $S$ in the radical of $G$.

For a global field $k$, we write $\OO_k$ for its ring of integers. We write $\absNorm(\mathfrak{a})=[\OO_k:\mathfrak{a}]$ for any fractional ideal $0\neq\mathfrak{a}\subseteq k$. We denote by $k_\mathfrak{p}$ the completion of $k$ at the place $\mathfrak{p}$ and by $\OO_{k,\mathfrak{p}}$ its valuation ring. Usually $p$ is its residual characteristic. We write $\Adeles_k$ resp.\ $\Adeles_k^{(\infty)}$ for the ring of (finite) ad\`eles over $k$. For a place $v$ of $k$ we let $\Adeles_k^{(\infty v)}$ denote the ring of finite ad\`eles with the $\infty$- and $v$-component removed.

For a quasi-character $\chi:k_{\mathfrak{p}}^\times\to\CC^\times$ of conductor $\mathfrak{f}=\OO_{k,\mathfrak{p}}f$ and an additive unramified character $\psi:k\to\CC^\times$ we fix the $\psi$-Gau\ss{} sum as
$$
G(\chi):=\sum_{a\!\!\pmod{\mathfrak{f}}}\chi\left(\frac{a}{f}\right)\psi\left(\frac{a}{f}\right).
$$
This is independent of $f$ and differs from the notion in \citep{kazhdanmazurschmidt2000,schmidt2001,januszewski2009} by the factor $\chi(f)$. This notion of Gau\ss{} sum naturally globalizes. We define
$$
t_{(f)}:=\diag(f^{n-1},f^{n-2},\dots,1)\in\GL_n(k_\mathfrak{p}).
$$
Write $w_{n}$ for the longest element of the Weyl group in $\GL_n$ (realized as permutation matrices). Define
$$
h^{(1)}:=
\begin{pmatrix}
&&&1\\
&w_{n-1}&&\vdots\\
&&&\vdots\\
0&\hdots&0&1
\end{pmatrix}\in\GL_{n}(\ZZ),
$$
and for any $f\in k_\mathfrak{p}^\times$ set
$$
h^{(f)}:=t_{(f)}^{-1}\cdot h^{(1)}\cdot t_{(f)}\in\GL_n(k_{\mathfrak{p}}).
$$
Throughout the paper we fix the diagonal embedding $j:\GL_{n-1}\to\GL_n$ by
$$
g\mapsto\begin{pmatrix}g&0\\0&1\end{pmatrix}.
$$

\section{Hecke algebras and Hecke relations}\label{sec:hecke}

In this section we review the interrelation of the Hecke algebra of full level and the Hecke algebra of Iwahori level. We need to extend the previous study in \citep{schmidt1993,kazhdanmazurschmidt2000,schmidt2001,januszewski2009} slightly, as due to our possibly non-trivial central characters the action of the center matters.

\subsection{Hecke algebras of finite level}

For the theory of parabolic Hecke algebras we refer to \citep{gritsenko1992} and for an overview of what we use see \citep{kazhdanmazurschmidt2000,januszewski2009}.

For any Hecke pair $(R,S)$ we define the free $\ZZ$-module $\mathcal H_\ZZ(R,S)$ over the set of all double cosets $RsR$, which naturally embeds into the free $\ZZ$-module $\mathscr R_\ZZ(R,S)$ over the set of the right cosets $sR$, $s\in S$, by the coset decomposition
$$
RsR=\bigsqcup_i s_iR\mapsto\sum_i s_iR.
$$
We identify $\mathcal H_\ZZ(R,S)$ with its image under this embedding, i.e.\ $\mathcal H_\ZZ(R,S)$ is the $\ZZ$-module of $R$-invariants under the action
$$
R\times \mathscr R_\ZZ(R,S)\to \mathscr R_\ZZ(R,S),\;\;\;(r,sR)\mapsto rsR.
$$
Furthermore $\mathcal H_\ZZ(R,S)$ admits a structure of an associative $\ZZ$-algebra with the multiplication
$$
\left(\sum_i s_iR\right)\cdot\left(\sum_jt_jR\right):=\sum_{i,j}s_it_jR.
$$
This algebra is unitary if and only if $R\cap S\neq\emptyset$. For any commutative ring $A$ we set
$$
\mathcal H_A(R,S):=\mathcal H_\ZZ(R,S)\otimes_\ZZ A.
$$
$\mathcal H_A(R,S)$ is an associative algebra over $A$. We define the {\em Hecke algebra} of the pair $(R,S)$ by $\mathcal H(R,S):=\mathcal H_\CC(R,S)$.

For a locally compact topological group $G$ and an compact open subgroup $K\leq G$ the module $\mathscr R_A(K,G)$ may be interpreted as the $A$-module of locally constant right $K$-invariant mappings $f:G\to A$ with compact support and $\mathcal H_A(K,G)$ is just the submodule of left $K$-invariant mappings. In this language multiplication is given by convolution
$$
\alpha*\beta\;:\;
x\mapsto \int_G \alpha(g)\beta(xg^{-1})dg,
$$
where $dg$ is the right invariant Haar measure on $G$ which assigns measure $1$ to $K$. This integral is eventually a finite sum with integer coefficients, hence this interpretations is valid even without assuming $A\subseteq\CC$.

All Hecke algebras we consider arise in this topological context. We have the elementary 
\begin{proposition}\label{prop:heckeinjektion}
Let $G$ denote a locally compact group, $H\leq G$ a closed subgroup and let $K\leq G$ be a compact open subgroup such that $L= H\cap K$ and $HK=G$. Then the restriction
$$
\alpha\mapsto \alpha|_H
$$
defines a monomorphism $\mathcal H_A(K,G)\to\mathcal H_A(L,H)$ of $A$-algebras.
\end{proposition}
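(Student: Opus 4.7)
My plan is to argue combinatorially through coset decompositions rather than via convolution integrals. An element $\alpha\in\mathcal H_A(K,G)$, viewed inside $\mathscr R_A(K,G)$, is a finite formal $A$-linear combination $\alpha = \sum_i a_i\,s_iK$ of distinct right $K$-cosets. The hypothesis $HK=G$ lets me replace each $s_i$ by an element of $H$ without changing its coset, and for $s_i\in H$ one has $s_iK\cap H = s_i(K\cap H) = s_iL$, so the restriction is
$$
\alpha|_H\ =\ \sum_i a_i\,s_iL\ \in\ \mathscr R_A(L,H).
$$
Bi-$L$-invariance is inherited from bi-$K$-invariance since $L\subseteq K$, hence $\alpha|_H\in\mathcal H_A(L,H)$. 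The expression is independent of the choice of $s_i\in H$, because two such representatives of one right $K$-coset differ by an element of $K\cap H = L$.

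Next I would establish injectivity through the bijection $H/L\to G/K$, $hL\mapsto hK$, which is surjective by $HK=G$ and injective because $hK=h'K$ with $h,h'\in H$ forces $h^{-1}h'\in K\cap H = L$. Consequently the cosets $s_iL$ appearing above are pairwise distinct, so $\alpha|_H=0$ forces every $a_i=0$, and therefore $\alpha=0$.

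Multiplicativity then follows formally from the multiplication rule recalled earlier. Writing $\alpha = \sum_i a_i\,s_iK$ and $\beta = \sum_j b_j\,t_jK$ with all $s_i,t_j\in H$, one has $\alpha\cdot\beta = \sum_{i,j} a_ib_j\,(s_it_j)K$, whose coset representatives $s_it_j$ also lie in $H$, and therefore
$$
(\alpha\cdot\beta)|_H\ =\ \sum_{i,j} a_ib_j\,(s_it_j)L\ =\ \alpha|_H\cdot\beta|_H
$$
by applying the same formula inside $\mathcal H_A(L,H)$. The argument is essentially formal; the only real content is that every right $K$-coset admits a representative in $H$ and that two such representatives produce equal $L$-cosets iff they produce equal $K$-cosets, both being immediate from $HK=G$ and $L=H\cap K$. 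I do not foresee a genuine technical obstacle beyond keeping track of this bijection of cosets.
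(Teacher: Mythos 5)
Your argument is correct: the bijection $H/L\to G/K$, $hL\mapsto hK$ (surjective by $HK=G$, injective by $L=H\cap K$) together with the identity $s_iK\cap H=s_iL$ for $s_i\in H$ is exactly what makes restriction well defined, injective, and compatible with the paper's multiplication rule $\sum_i s_iR\cdot\sum_j t_jR=\sum_{i,j}s_it_jR$. The paper states this proposition without proof, calling it elementary, and your coset-theoretic argument is precisely the intended one implicit in its setup.
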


\subsection{The standard Hecke algebra}

Fix a global field $k$ and a finite place $\mathfrak{p}$ of $k$. For the standard Hecke algebra $\mathcal H_\CC(\GL_n(\OO_{k,\mathfrak{p}}),\GL_n(k_\mathfrak{p})))$ at $\mathfrak{p}$, i.e.\ $K=\GL_n(\OO_{k,\mathfrak{p}})$, and $G=\GL_n(k_{\mathfrak{p}})$ we have the Satake isomorphism
$$
\mathcal S:\mathcal H_\CC(K,G)\to \CC[X_1^{\pm1},\dots,X_n^{\pm1}]^{S_n},
$$
$$
T_\nu\mapsto \absNorm(\mathfrak{p})^{\frac{\nu(\nu+1)}{2}}\cdot\sigma_\nu(X_1,\dots,X_n),\;\;\;(0\leq\nu\leq n)
$$
where $S_n$ is the symmetric group, permutating the $X_i$, and
$$
T_\nu:=K\begin{pmatrix}\varpi\cdot{\bf1}_{n}&0\\0&{\bf1}_{n-\nu}\end{pmatrix}K
$$
is independent of the choice of a prime $\varpi$. Furthermore $\sigma_\nu$ is the elementary symmetric polynomial of degree $\nu$ in $X_1,\dots,X_n$, cf. \citep{tamagawa1963,satake1963}.

\subsection{The Hecke algebras of Iwahori level}

For an integer $r>0$ and fix the Iwahori subgroup $K_{I^{(r)}}:=I_n^{(r)}\subseteq K$ of level $\mathfrak{p}^r$ as the subgroup of matrices becoming upper triangular mod $\mathfrak{p}^r$. We let $T^+$ denote the monoid of integral diagonal matrices $\diag(a_1,\dots,a_n)$ satisfying the dominance condition
$$
|a_1|_\mathfrak{p}\leq
|a_2|_\mathfrak{p}\leq\cdots\leq
|a_n|_\mathfrak{p}.
$$
It is easy to see that $\Delta_{I^{(r)}}:=K_{I^{(r)}}T^+K_{I^{(r)}}$ is a closed submonoid of $G$.

We have $G=\Delta_{I^{(r)}}K$ due to the Iwasawa decomposition (see below), and furthermore $I^{(r)}\cap K=K_{I^{(r)}}$. Hence the above Proposition applies and shows that we have a canonical inclusion
$$
\mathcal H_G:=\mathcal H_\QQ(K,G)\to\mathcal H_\QQ(K_{I^{(r)}},\Delta_{I^{(r)}})=:\mathcal H_I
$$
of the standard Hecke algebra into the Hecke algebra of Iwahori level. We will study the latter Hecke algebra by means of the parabolic Hecke algebra.

\subsection{The parabolic Hecke algebra}

We define $K_B:=B\cap K=B_n(\OO_{k,\mathfrak{p}})$ and the {\em parabolic Hecke algebra} as $\mathcal H_B:=\mathcal H_\QQ(K_B,B)$. Then Iwasawa decomposition \citep{iwahorimatsumoto1965}, Proposition 2.33, \citep{satake1963}, section 8.2, guarantees that the hypothesis of Proposition \ref{prop:heckeinjektion} is fulfilled and we see that $\mathcal H_B$ is a ring extension of $\mathcal H_G$, with respect to the explicit embedding $\epsilon:\mathcal H_G\to\mathcal H_B$ given by
$$
\sum_i a_i\cdot g_iK\mapsto \sum_i a_i\cdot g_iK_B,
$$
where $g_i\in B$. This embedding factors over the Hecke algebra of Iwahori level.

In general $\mathcal H_B$ is a huge non-commutative algebra, containing a well behaved commutative subalgebra that was studied by Gritsenko. This will help us to understand (a corresponding commutative subalgebra of) the Hecke algebra of Iwahori level.

\subsection{Decomposition of Hecke polynomials}

We restrict our attention to the finitely generated subalgebra $\mathcal H_B^0$ of $H_B$ generated by
$$
U_i:=K_{B}
\begin{pmatrix}
{\bf1}_{i-1}&0&0\\
0&\varpi&0\\
0&0&{\bf1}_{n-i}
\end{pmatrix}
K_{B},
$$
which commute in $\mathcal H_B$ by \citep[Lemma 2]{gritsenko1992} and hence $\mathcal H_{B}^0$ is commutative. This algebra contains $\mathcal H_I$ and following \citep[Theorem 2]{gritsenko1992} we have over $\mathcal H_B^0$ a decomposition of the Hecke polynomial
$$
H_\mathfrak{p}(X):=\sum_{\nu=0}^n (-1)^\nu \absNorm(\mathfrak{p})^{\frac{(\nu-1)\nu}{2}}T_\nu X^{n-\nu}\in\mathcal H_I(X)
$$
into linear factors
\begin{equation}
H_\mathfrak{p}(X)=\prod_{i=1}^n(X-U_i).
\label{eq:gritsenko}
\end{equation}
The unit element of $\mathcal H_{B}^0$ is
$$
V_{\mathfrak{p},0}:=K_{B}{\bf1}_nK_{B}
$$
We define for $1\leq \nu\leq n$ the operators
$$
V_{\mathfrak{p},\nu}:=\absNorm(\mathfrak{p})^{-\frac{(\nu-1)\nu}{2}}\cdot U_1 U_2\cdots U_\nu\in\mathcal H_{B}
$$
and
$$
V_{\mathfrak{p}}:=\prod_{\nu=1}^{n-1}V_{\mathfrak{p},\nu}\in\mathcal H_{B}.
$$
We also introduce
$$
V_{\mathfrak{p}}':=V_{\mathfrak{p},n}\cdot V_\mathfrak{p}=\prod_{\nu=1}^{n}V_{\mathfrak{p},\nu}
\in\mathcal H_{B}
$$
\begin{lemma}\label{lem:hecke1}
For $0\leq \nu\leq n$ the operators $V_{\mathfrak{p},\nu}$ lie in $H_{I^{(r)}}$ and
$$
V_{\mathfrak{p},\nu}=
K_{I^{(r)}}
\begin{pmatrix}
\varpi\cdot {\bf1}_{\nu}&0\\
0& {\bf1}_{n-\nu}
\end{pmatrix}
K_{I^{(r)}}
=
\bigsqcup_A
\begin{pmatrix}
\varpi\cdot {\bf1}_{\nu}& A\\
0& {\bf1}_{n-\nu}
\end{pmatrix}
K_{I^{(r)}},
$$
where $A\in \OO_{k,\mathfrak{p}}^{\nu\times n-\nu}$ runs through a system of representatives modulo $\mathfrak{p}$. Furthermore the Hecke operators $V_{\mathfrak{p},\nu}$ commute for $0\leq\nu\leq n$ and
$$
V_{\mathfrak{p}}=
K_{I^{(r)}} t_{(\varpi)}K_{I^{(r)}}=
\bigsqcup_u
ut_{(\varpi)}K_{I^{(r)}},
$$
where $u$ runs through a system of representatives of $U_n(\OO_{k,\mathfrak{p}})/t_{(\varpi)}U_n(\OO_{k,\mathfrak{p}})t_{(\varpi)}^{-1}$. Furthermore
$$
V_{\mathfrak{p}}'=
K_{I^{(r)}} \varpi t_{(\varpi)}K_{I^{(r)}}=
\bigsqcup_u
u\varpi t_{(\varpi)}K_{I^{(r)}},
$$
\end{lemma}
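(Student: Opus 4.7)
The plan is to establish the three assertions of the lemma in turn, with the central work concentrated in the identification of $V_{\mathfrak{p},\nu}$ with the claimed double coset.

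I would first establish the right-coset decomposition of $\tilde V_\nu := K_{I^{(r)}} d_\nu K_{I^{(r)}}$ where $d_\nu := \diag(\varpi \mathbf{1}_\nu, \mathbf{1}_{n-\nu})$. The key input is the dominance of $d_\nu$ with respect to the upper Borel: a direct orbit--stabilizer calculation shows that $K_{I^{(r)}} \cap d_\nu K_{I^{(r)}} d_\nu^{-1}$ coincides with the subgroup of $K_{I^{(r)}}$ whose upper-right $\nu\times(n-\nu)$ block lies in $\mathfrak{p}\cdot\OO_{k,\mathfrak{p}}^{\nu\times(n-\nu)}$, giving index $\absNorm(\mathfrak{p})^{\nu(n-\nu)}$ and upper-unipotent coset representatives $\begin{pmatrix}\mathbf{1}_\nu & A \\ 0 & \mathbf{1}_{n-\nu}\end{pmatrix}$ with $A$ running through $\OO_{k,\mathfrak{p}}^{\nu\times(n-\nu)}$ modulo $\mathfrak{p}$; right multiplication by $d_\nu$ yields the stated shape. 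To identify $V_{\mathfrak{p},\nu}$ with $\tilde V_\nu$ I would then induct on $\nu$. The base case $\nu=1$ is the definition $V_{\mathfrak{p},1} = U_1 = \tilde V_1$. For the step, I would convolve the coset decomposition of $\tilde V_\nu$ with that of $U_{\nu+1}$: each product $\begin{pmatrix}\varpi\mathbf{1}_\nu & A \\ 0 & \mathbf{1}_{n-\nu}\end{pmatrix}\cdot(\text{rep of }U_{\nu+1})$ can be rewritten as a representative of $\tilde V_{\nu+1}$ times an element of $K_{I^{(r)}}$, and a coset count shows that each right coset of $\tilde V_{\nu+1}$ occurs with uniform multiplicity $\absNorm(\mathfrak{p})^\nu$. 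This exactly compensates the renormalization $\absNorm(\mathfrak{p})^{-\nu(\nu+1)/2+\nu(\nu-1)/2}=\absNorm(\mathfrak{p})^{-\nu}$ and closes the induction.

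Commutativity of the $V_{\mathfrak{p},\nu}$ is immediate from commutativity of the $U_i$ in $\mathcal H_B$ established by Gritsenko \citep[Lemma~2]{gritsenko1992}. For the formula for $V_{\mathfrak{p}}$, I would observe the matrix identity $\prod_{\nu=1}^{n-1} d_\nu = t_{(\varpi)}$ and apply the orbit--stabilizer argument to $t_{(\varpi)}$: because this element is strictly dominant, the stabilizer of $t_{(\varpi)}K_{I^{(r)}}$ in $K_{I^{(r)}}$ intersects the upper unipotent part precisely in $t_{(\varpi)} U_n(\OO_{k,\mathfrak{p}}) t_{(\varpi)}^{-1}$, giving right coset representatives $u\,t_{(\varpi)}$ with $u$ running through $U_n(\OO_{k,\mathfrak{p}})/t_{(\varpi)}U_n(\OO_{k,\mathfrak{p}})t_{(\varpi)}^{-1}$. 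A bookkeeping check $\prod_{i<j}\absNorm(\mathfrak{p})^{j-i} = \prod_{\nu=1}^{n-1}\absNorm(\mathfrak{p})^{\nu(n-\nu)}$ confirms that the product $\prod_{\nu=1}^{n-1} V_{\mathfrak{p},\nu}$ coincides with this single double coset without further multiplicity. Since $V_{\mathfrak{p},n} = \varpi\mathbf{1}_n K_{I^{(r)}}$ is the class of the central scalar $\varpi\mathbf{1}_n$, $V_{\mathfrak{p}}' = V_{\mathfrak{p},n}\cdot V_{\mathfrak{p}}$ is obtained from $V_{\mathfrak{p}}$ by multiplication by $\varpi\mathbf{1}_n$, yielding the claimed decomposition with representatives $u\varpi t_{(\varpi)}$.

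The main obstacle is the inductive step in the identification of $V_{\mathfrak{p},\nu}$ with $\tilde V_\nu$: precisely verifying that the multiplicities in $\tilde V_\nu\cdot U_{\nu+1}$ are uniformly $\absNorm(\mathfrak{p})^\nu$ on every right coset of $\tilde V_{\nu+1}$ requires careful combinatorial analysis of the matrix convolution and its $K_{I^{(r)}}$-classes. A cleaner alternative is to perform the computation in the parabolic Hecke algebra $\mathcal H_B$, where Gritsenko's structural results directly give the analogous identity $\absNorm(\mathfrak{p})^{-\nu(\nu-1)/2}U_1\cdots U_\nu = K_B d_\nu K_B$, and to transfer the identities back via the embedding discussed in Section~1.4.
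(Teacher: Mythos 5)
Your orbit--stabilizer computations for the \emph{dominant} elements $d_\nu=\diag(\varpi\cdot{\bf1}_\nu,{\bf1}_{n-\nu})$ and $t_{(\varpi)}$ are correct: conjugation by these only strengthens the lower-triangular congruences defining $K_{I^{(r)}}$, so the stabilizer is cut out purely by upper-unipotent conditions and the stated representatives exhaust the double coset. The genuine gap is in the inductive step, where you convolve with ``the coset decomposition of $U_{\nu+1}$''. For $2\leq i\leq n$ the element $\diag({\bf1}_{i-1},\varpi,{\bf1}_{n-i})$ is \emph{not} dominant, and your own orbit--stabilizer method, applied inside $\GL_n(k_\mathfrak{p})$, gives a stabilizer cut out by $g_{il}\equiv 0\pmod{\mathfrak{p}}$ for $l>i$ \emph{and} $g_{il}\equiv 0\pmod{\mathfrak{p}^{r+1}}$ for $l<i$, hence of index $\absNorm(\mathfrak{p})^{n-1}$ rather than the $\absNorm(\mathfrak{p})^{n-i}$ your degree bookkeeping requires: there are $i-1$ extra coset directions, represented by lower-unipotent matrices congruent to ${\bf1}$ mod $\mathfrak{p}^{r}$ but not mod $\mathfrak{p}^{r+1}$. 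Since the degree is multiplicative, $\tilde V_\nu\cdot U_{\nu+1}$ then has $\absNorm(\mathfrak{p})^{\nu}$ too many right cosets to equal $\absNorm(\mathfrak{p})^{\nu}\tilde V_{\nu+1}$, and the product is not even supported on the double coset of $d_{\nu+1}$: already for $n=2$ one has
$$
\begin{pmatrix}\varpi&b\\0&1\end{pmatrix}
\begin{pmatrix}1&0\\c\varpi^{r}&\varpi\end{pmatrix}
=\varpi\begin{pmatrix}1+bc\varpi^{r-1}&b\\c\varpi^{r-1}&1\end{pmatrix}
\notin\varpi K_{I^{(r)}}
\quad\text{for }c\not\equiv 0\pmod{\mathfrak{p}},
$$
so the claim that each product ``can be rewritten as a representative of $\tilde V_{\nu+1}$ times an element of $K_{I^{(r)}}$'' fails for these representatives.

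The identities you are after (Gritsenko's factorization, commutativity, and $V_{\mathfrak{p},\nu}=\tilde V_\nu$) are coherent only when the $U_i$ are read through the parabolic Hecke algebra $\mathcal H_B$, i.e.\ with upper-triangular coset representatives only, where $\deg U_i=\absNorm(\mathfrak{p})^{n-i}$ and your multiplicity count becomes consistent; your closing ``cleaner alternative'' is therefore not an alternative but the only viable route, and the transfer back to Iwahori level is itself a point that needs care rather than a remark. For comparison, the paper proves none of this directly: it quotes \citep[Lemma 4.1]{kazhdanmazurschmidt2000} for $0\leq\nu<n$ and deduces the case $\nu=n$ together with the $V_{\mathfrak{p}}'$-statement from the identity $T_n=V_{\mathfrak{p},n}$, read off from the constant term of Gritsenko's factorization \eqref{eq:gritsenko}.
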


\begin{proof}
The first part of the lemma is the same as \citep[Lemma 4.1]{kazhdanmazurschmidt2000}, at least when $0\leq \nu< n$. The case $\nu=n$ as well as the last part follow from the identity
$$
T_n=
V_{\mathfrak{p},n},
$$
which is an immediate consequence of Gritsenko's factorization \eqref{eq:gritsenko}.
\end{proof}

\subsection{The projection formula}

Let $\underline\lambda=(\lambda_1,\dots,\lambda_{m})\in E^{m}$ for $0\leq m\leq n$. We define the $\mathcal H_{I}$-submodule $\mathcal M^{\underline{\lambda}}$ of $\mathcal M$ consisting of all $\psi\in\mathcal M$ such that
\begin{equation}
\forall \nu=1,2,\dots,m:\;\;\;H_\mathfrak{p}(\lambda_\nu)\cdot \psi=0.
\label{eq:heckeroots}
\end{equation}
Furthermore we set
$$
\eta_\nu:=\absNorm(\mathfrak{p})^{-\frac{\nu(\nu-1)}{2}}\prod_{i=1}^\nu\lambda_i
$$
for $1\leq\nu\leq m$. We denote by $\mathcal M_{\underline{\lambda}}$ the $\mathcal H_{I}$-submodule of $\mathcal M^{\underline{\lambda}}$ consisting of vectors $\psi\in\mathcal M$ that are simultaneous eigen functions for $V_{\mathfrak{p},1},\dots,V_{\mathfrak{p},m}$ with eigen value $\eta_\nu$, i.e.\ the subspace of $\psi$ satisfying
$$
V_{\mathfrak{p},\nu}\cdot\psi=\eta_\nu\cdot\psi
$$
for $1\leq\nu\leq m$.

\begin{proposition}\label{prop:heckemodifikation}
Let $\mathcal M$ be a $\mathcal H_{I}$-module over a field $E$. Let $\underline\lambda=(\lambda_1,\dots,\lambda_{m})\in E^{m}$ for $0\leq m\leq n$. Then the map
$$
\Pi_{\underline{\lambda}}^0:\;\psi\;\mapsto\;
\prod_{i=1}^{m}
\prod_{\begin{subarray}cj=1\\j\neq i\end{subarray}}^{n}
(\lambda_i\absNorm(\mathfrak{p})^{1-j}V_{\mathfrak{p},j-1}-V_{\mathfrak{p},j})
\cdot \psi
$$
is a well defined $\mathcal H_{I}$-module map
$$
\Pi_{\underline{\lambda}}^0:\mathcal M^{\underline{\lambda}}\to\mathcal M_{\underline{\lambda}}.
$$
\end{proposition}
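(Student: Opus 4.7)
The plan is to reduce the proposition to an elementary Lagrange-interpolation identity inside the commutative subalgebra $\mathcal{H}_{I^{(r)}}$, exploiting Gritsenko's factorization \eqref{eq:gritsenko} and the pairwise commutativity of $U_1,\dots,U_n$.

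First I would unpack the definition of each factor. Using $V_{\mathfrak{p},\nu}=\absNorm(\mathfrak{p})^{-\nu(\nu-1)/2}U_1\cdots U_\nu$ (with $V_{\mathfrak{p},0}=1$) and a direct calculation with the exponents of $\absNorm(\mathfrak{p})$, one obtains the identity
$$
\lambda_i\absNorm(\mathfrak{p})^{1-j}V_{\mathfrak{p},j-1}-V_{\mathfrak{p},j}
=\absNorm(\mathfrak{p})^{-j(j-1)/2}\,U_1\cdots U_{j-1}\,(\lambda_i-U_j),
$$
valid for all $1\leq j\leq n$ (with the empty product convention at $j=1$). Pulling out all scalars and all monomial prefactors (which is permitted because the $U_k$ commute in $\mathcal{H}_{I^{(r)}}$), I can write
$$
\Pi_{\underline{\lambda}}^0 \;=\; c\cdot W\cdot P_{\underline{\lambda}},
\qquad
P_{\underline{\lambda}}\;:=\;\prod_{i=1}^{m}\prod_{\substack{j=1\\j\neq i}}^{n}(\lambda_i-U_j),
$$
where $c$ is a power of $\absNorm(\mathfrak{p})$ and $W$ is a monomial in the $U_k$.

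Second, I would verify the eigenvalue conditions $V_{\mathfrak{p},\nu}\,\Pi_{\underline{\lambda}}^0\psi=\eta_\nu\Pi_{\underline{\lambda}}^0\psi$ for $1\le \nu\le m$. Since $V_{\mathfrak{p},\nu}$ is $\absNorm(\mathfrak{p})^{-\nu(\nu-1)/2}U_1\cdots U_\nu$ and $\eta_\nu=\absNorm(\mathfrak{p})^{-\nu(\nu-1)/2}\prod_{i=1}^{\nu}\lambda_i$, it suffices to prove $U_\nu\cdot \Pi_{\underline{\lambda}}^0\psi=\lambda_\nu\cdot \Pi_{\underline{\lambda}}^0\psi$ for $\nu=1,\dots,m$. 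Using commutativity to group the $i=\nu$ part of $P_{\underline{\lambda}}$ with $(U_\nu-\lambda_\nu)$, I obtain
$$
(U_\nu-\lambda_\nu)\prod_{\substack{j=1\\j\neq\nu}}^{n}(\lambda_\nu-U_j)
\;=\;-\prod_{j=1}^{n}(\lambda_\nu-U_j)
\;=\;-H_\mathfrak{p}(\lambda_\nu),
$$
the last equality being Gritsenko's factorization \eqref{eq:gritsenko}. Moving $H_\mathfrak{p}(\lambda_\nu)$ past the remaining commuting factors directly onto $\psi$ gives $0$ by the hypothesis $\psi\in \mathcal{M}^{\underline{\lambda}}$, hence $(U_\nu-\lambda_\nu)\Pi_{\underline{\lambda}}^0\psi=0$.

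Finally, $\Pi_{\underline{\lambda}}^0\psi$ still lies in $\mathcal{M}^{\underline{\lambda}}$ because each $H_\mathfrak{p}(\lambda_i)$ commutes with $\Pi_{\underline{\lambda}}^0$ and annihilates $\psi$, and $\Pi_{\underline{\lambda}}^0$ is automatically $\mathcal{H}_{I^{(r)}}$-linear since it is left-multiplication by an element of the commutative algebra $\mathcal{H}_{I^{(r)}}$. Combined with the previous step this places the image in $\mathcal{M}_{\underline{\lambda}}$, as required. The only genuinely delicate point is the bookkeeping of the $\absNorm(\mathfrak{p})$-exponents in the initial factorization; once that identity is in place, the rest is a Lagrange interpolation argument enabled by Gritsenko's splitting.
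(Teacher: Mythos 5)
Your proof is correct, and it fills in precisely the argument the paper delegates to \citep[Proposition 4.2]{kazhdanmazurschmidt2000}: the normalization identity $\lambda_i\absNorm(\mathfrak{p})^{1-j}V_{\mathfrak{p},j-1}-V_{\mathfrak{p},j}=\absNorm(\mathfrak{p})^{-j(j-1)/2}U_1\cdots U_{j-1}(\lambda_i-U_j)$ checks out, and combining the $i=\nu$ block with $(U_\nu-\lambda_\nu)$ to produce $-H_{\mathfrak{p}}(\lambda_\nu)$ via Gritsenko's factorization \eqref{eq:gritsenko} is exactly the intended mechanism. The commutativity of $\mathcal H_{I^{(r)}}$ justifies both the $\mathcal H_{I^{(r)}}$-linearity and the preservation of $\mathcal M^{\underline{\lambda}}$, so no gaps remain.
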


\begin{proof}
As the Hecke operators $U_1, \dots, U_n$  commute with $V_{\mathfrak{p},0},\dots,V_{\mathfrak{p},m}$, we see that $\Pi_{\underline{\lambda}}^0$ is indeed an endomorphism of $\mathcal M^{\underline{\lambda}}$.

That $\Pi_{\underline{\lambda}}^0$ is a well defined $\mathcal H_{I}$-module homomorphism $\mathcal M^{\underline{\lambda}}\to \mathcal M_{\underline{\lambda}}$ was proven for $k=\QQ$, $m=n-1$ and $r=1$ in \citep[Proposition 4.2]{kazhdanmazurschmidt2000}. The proof given there eventually shows the slightly more general statement for any $k$, $m$ and $r$.
\end{proof}

\begin{proposition}\label{prop:heckeprojection}
Let $\mathcal M$ be a $\mathcal H_{I}$-module over a field $E$. Let $\underline\lambda=(\lambda_1,\dots,\lambda_{m})\in E^{m}$ with pairwise distinct non-zero $\lambda_1,\dots,\lambda_{m})$ for $0\leq m\leq n$. Then the map
$$
\Pi_{\underline{\lambda}}:\;\psi\;\mapsto\;
\prod_{i=1}^{m}
\prod_{\begin{subarray}cj=1\\j\neq i\end{subarray}}^{n}
\frac{\lambda_i\absNorm(\mathfrak{p})^{1-j}V_{\mathfrak{p},j-1}-V_{\mathfrak{p},j}}
{\lambda_i\cdot\absNorm(\mathfrak{p})^{1-j}\cdot\eta_{j-1}-\eta_j}
\cdot \psi
$$
is a well defined projection
$$
\Pi_{\underline{\lambda}}:\mathcal M^{\underline{\lambda}}\to\mathcal M_{\underline{\lambda}}.
$$
\end{proposition}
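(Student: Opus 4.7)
The plan is to deduce Proposition \ref{prop:heckeprojection} directly from Proposition \ref{prop:heckemodifikation}: the operator $\Pi_{\underline{\lambda}}$ is precisely $\Pi_{\underline{\lambda}}^0$ normalized by the scalar
$$
C(\underline{\lambda})\;:=\;\prod_{i=1}^{m}\prod_{\substack{j=1\\ j\neq i}}^{n}\bigl(\lambda_i\absNorm(\mathfrak{p})^{1-j}\eta_{j-1}-\eta_j\bigr).
$$
The proof will therefore break into (i) verifying that $C(\underline{\lambda})$ is non-zero, so that $\Pi_{\underline{\lambda}}$ is well-defined as an $\mathcal H_{I^{(r)}}$-module map into $\mathcal M_{\underline{\lambda}}$, and (ii) showing that after this normalization the resulting endomorphism restricts to the identity on $\mathcal M_{\underline{\lambda}}$, which is precisely the statement that it is a projection.

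For (i), the key observation is the elementary recursion
$$
\eta_j \;=\; \absNorm(\mathfrak{p})^{1-j}\,\lambda_j\,\eta_{j-1},
$$
immediate from the definition of $\eta_\nu$. Substituted into each denominator it yields
$$
\lambda_i\absNorm(\mathfrak{p})^{1-j}\eta_{j-1}-\eta_j\;=\;\absNorm(\mathfrak{p})^{1-j}\,\eta_{j-1}\,(\lambda_i-\lambda_j),
$$
which is non-zero since by hypothesis the $\lambda_\nu$ are pairwise distinct and non-zero (whence in particular $\eta_{j-1}\neq 0$). Therefore $C(\underline{\lambda})\neq 0$, and Proposition \ref{prop:heckemodifikation} yields at once the well-definedness of $\Pi_{\underline{\lambda}}$ as an $\mathcal H_{I^{(r)}}$-module homomorphism $\mathcal M^{\underline{\lambda}}\to\mathcal M_{\underline{\lambda}}$.

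Step (ii) is then purely formal: on $\mathcal M_{\underline{\lambda}}$ each operator $V_{\mathfrak{p},\nu}$ acts by the scalar $\eta_\nu$, so every factor in the product defining $\Pi_{\underline{\lambda}}$ reduces to $1$, and $\Pi_{\underline{\lambda}}|_{\mathcal M_{\underline{\lambda}}}=\id$. I anticipate no substantial obstacle; the only real content is the algebraic identity underlying (i), after which everything follows formally from the preceding proposition.
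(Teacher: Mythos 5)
Your proposal is correct and follows essentially the same route as the paper: both deduce well-definedness and the landing in $\mathcal M_{\underline{\lambda}}$ from Proposition \ref{prop:heckemodifikation}, and both verify the projection property by observing that on $\mathcal M_{\underline{\lambda}}$ each numerator $\lambda_i\absNorm(\mathfrak{p})^{1-j}V_{\mathfrak{p},j-1}-V_{\mathfrak{p},j}$ specializes to the corresponding denominator (the paper merely organizes this as an induction on $m$, peeling off the $i=m$ factor, where you treat all factors at once). Your explicit factorization $\lambda_i\absNorm(\mathfrak{p})^{1-j}\eta_{j-1}-\eta_j=\absNorm(\mathfrak{p})^{1-j}\eta_{j-1}(\lambda_i-\lambda_j)$, showing where the hypothesis of pairwise distinct non-zero $\lambda_\nu$ enters, is a worthwhile detail the paper leaves implicit.
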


\begin{proof}
Due to Proposition \eqref{prop:heckemodifikation} it remains only to show that $\Pi_{\underline{\lambda}}$ is indeed a projection, i.e.\ induces the identity on $\mathcal M_{\underline{\lambda}}$. We proof this by induction on $m$, the case $m=0$ being clear. Assume that $m>0$. Set $\underline{\lambda}':=(\lambda_1,\dots,\lambda_{m-1})$ and by our induction hypothesis $\Pi_{\underline{\lambda}'}$ induces the identity on $\mathcal M^{\underline{\lambda}'}$. Pick any $\psi\in\mathcal M^{\underline{\lambda}}$. Then $\psi$ lies in $\mathcal M^{\underline{\lambda}'}$ and therefore
$$
\Pi_{\underline{\lambda}}(\psi)=
\prod_{\begin{subarray}cj=1\\j\neq m\end{subarray}}^{n}
\frac{\lambda_m\absNorm(\mathfrak{p})^{1-j}V_{\mathfrak{p},j-1}-V_{\mathfrak{p},j}}
{\lambda_m\cdot\absNorm(\mathfrak{p})^{1-j}\cdot\eta_{j-1}-\eta_j}
\cdot
\Pi_{\underline{\lambda}'}(\psi)=
\prod_{\begin{subarray}cj=1\\j\neq m\end{subarray}}^{n}
\frac{\lambda_m\absNorm(\mathfrak{p})^{1-j}V_{\mathfrak{p},j-1}-V_{\mathfrak{p},j}}
{\lambda_m\cdot\absNorm(\mathfrak{p})^{1-j}\cdot\eta_{j-1}-\eta_j}
\cdot\psi=
$$
$$
\prod_{\begin{subarray}cj=1\\j\neq n\end{subarray}}^{n}
\frac{\lambda_m\cdot\absNorm(\mathfrak{p})^{1-j}\cdot\eta_{j-1}-\eta_j}
{\lambda_m\cdot\absNorm(\mathfrak{p})^{1-j}\cdot\eta_{j-1}-\eta_j}
\cdot\psi=\psi,
$$
because $\psi$ is an eigen vector for $V_{\mathfrak{p},}$ with eigen value $\eta_m$.
\end{proof}

\section{The Birch Lemma}\label{sec:birch}

In this section we generalize the Birch Lemma of \citep{januszewski2009} to pairs $(\pi,\sigma)$ which are allowed to be of level $K_{I^{(r)}}$ at $\mathfrak{p}$ and are minimal among $\mathfrak{p}$-power twists. We also renormalize the Birch Lemma, which enables us to overcome the class number restriction encountered in \citep{januszewski2009}.

\subsection{The local Zeta integral}\label{sec:birchlocal}

We use the notation of \cite[Section 2]{januszewski2009} in the following modified setting. Let $\chi:F^\times\to\CC^\times$ be a character of a local field $F$ of non-trivial conductor $\mathfrak{f}_\chi$ generated by $f_\chi=\varpi^s$. We fix another element $f=\varpi^r\in\OO_F$ with $r\geq s$ and write $I_n^{(r)}$ for the Iwahori subgroup of $\GL_n(\OO_F)$ of level $f$.

All quantities that are defined relative to $f$ retain their meaning, i.e.\ the matrices $A_n$, $\tilde{A}_n$, $B_n$, $C_n$, $D_n$, $E_n$, $\phi_n$ are all defined with respect to $f=\varpi^r$, as are the groups $J_{l,n}$ and $\overline{T}_{l,n}$.

We define $\mathfrak{R}_{l,n}$ and its variants as before, i.e.\ via $I_n=I_n^{(1)}$. Assume as before that $l\geq 2n$. We have
$$
J_{l,n}\subseteq I_n^{(r)}\cap w_nD_n^{-1}I_n^{(r)}D_nw_n,
$$
generalizing equation (6) of loc.\ cit.. 

For any $\delta\in\ZZ$ we define
$$
j_\delta:\GL_n(F)\to\GL_{n+1}(F),
$$
$$
g\mapsto
\begin{pmatrix}
g&0\\
0&\varpi^\delta
\end{pmatrix},
$$
and
$$
\lambda_n^\delta(g):=\lambda_n(\varpi^{-\delta}\cdot g).
$$
We need the following generalized statement of Lemma 2.6 of loc.\ cit.
\begin{lemma}\label{lem:birchinductionrelation}
For any $I_{n+1}^{(r)}$- resp.\ $I_n^{(r)}$-invariant $\psi$- resp.\ $\psi^{-1}$-Whittaker functions $w$ and $v$ on $\GL_{n+1}(F)$ resp.\ $\GL_{n}(F)$ and any $\delta\in\ZZ$ we have
$$
w\left(j_\delta(g)C_{n+1}\cdot D_{n+1}w_{n+1}\right) v(g)=
$$
\begin{equation}
\psi\left(\lambda_n^\delta(gB_n)\right) w\left(j_\delta(gB_n\cdot D_nw_n)\right) v(gB_n).
\label{eq:lemma26}
\end{equation}\end{lemma}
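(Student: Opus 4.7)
The plan is to reduce the statement to an explicit matrix identity in $\GL_{n+1}(F)$ of the form
$$
j_\delta(g)\,C_{n+1}\,D_{n+1}\,w_{n+1}
\;=\; u(g)\cdot j_\delta\!\left(gB_n\,D_n\,w_n\right)\cdot k(g),
$$
where $u(g)\in U_{n+1}(F)$ is upper unipotent with $\psi(u(g))=\psi(\lambda_n^\delta(gB_n))$, and $k(g)\in I_{n+1}^{(r)}$. Given such a factorization, applying the $\psi$-Whittaker functional $w$ on the left produces $\psi(\lambda_n^\delta(gB_n))\,w(j_\delta(gB_n D_n w_n))$ on account of left Whittaker equivariance and right $I_{n+1}^{(r)}$-invariance. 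The equality $v(g)=v(gB_n)$ on the $v$-side is then either built into the definition of $B_n$ (which lies in $I_n^{(r)}$, being a product of upper triangular and level-$r$ congruence pieces) or follows by the analogous, simpler identity for $\GL_n$; in both cases the required right invariance of $v$ under $I_n^{(r)}$ disposes of it.

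To incorporate the twist $\delta$ into the argument, I would write $j_\delta(g)=j_0(g)\cdot\diag({\bf 1}_n,\varpi^\delta)$ and propagate the central-like factor $\diag({\bf 1}_n,\varpi^\delta)$ through $C_{n+1}$, $D_{n+1}$ and $w_{n+1}$. This produces exactly the shift $\lambda_n\mapsto \lambda_n^\delta$ via the defining relation $\lambda_n^\delta(g)=\lambda_n(\varpi^{-\delta}g)$, and converts $j_0(gB_n D_n w_n)$ to $j_\delta(gB_n D_n w_n)$ on the right-hand side. The level-$r$ refinement is handled in parallel: since $A_n,\tilde A_n, B_n, C_n, D_n, E_n, \phi_n$ are here defined with respect to $f=\varpi^r$ rather than $\varpi$, every congruence that put the residual Iwahori-factor in the original proof into $I_{n+1}^{(1)}$ now puts it into $I_{n+1}^{(r)}$, because the relevant off-diagonal entries pick up an extra power $\varpi^{r}$ from the rescaled matrices. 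The inclusion $J_{l,n}\subseteq I_n^{(r)}\cap w_nD_n^{-1}I_n^{(r)}D_nw_n$ recorded in the paragraph preceding the lemma is precisely the tool needed to certify this.

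The actual derivation of the matrix identity then proceeds by the same block computation as in \citep[Lemma 2.6]{januszewski2009}: writing $g$ in the $\GL_{n-1}\times\GL_1\times\GL_1$ block decomposition inside $\GL_{n+1}$, one pushes $C_{n+1}$ through $j_\delta(g)$ using its explicit shape, absorbs the resulting top-right block into an upper unipotent matrix $u(g)$, and recognizes the remaining factors as $j_\delta(gB_n\,D_n w_n)$ times a residual element of $I_{n+1}^{(r)}$. The character $\psi$ applied to the Mirabolic-unipotent part of $u(g)$ is linear in the last row of $gB_n$, and a direct check identifies this linear form with $\lambda_n^\delta(gB_n)$ after accounting for the $\varpi^\delta$-factor.

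The main obstacle will be the bookkeeping in this last block computation: certifying that the residual factor really lies in $I_{n+1}^{(r)}$ (rather than the coarser $I_{n+1}^{(1)}$), and that the factor $\varpi^\delta$ propagates into $\lambda_n^\delta$ as claimed without leaving extra factors elsewhere. Both are congruence-level checks on the explicit matrices $C_{n+1}, D_{n+1}, w_{n+1}$, and once they are carried out the remainder of the proof is formal, using only the Whittaker equivariance of $w$ and $v$ and their right-invariance under the respective Iwahori subgroups of level $r$.
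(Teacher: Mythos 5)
Your proposal follows essentially the same route as the paper: the author likewise observes that all congruences in the proof of Lemma 2.6 of \citep{januszewski2009} remain valid modulo $I_{n+1}^{(r)}$ (settling $\delta=0$ at level $r$), and then reduces general $\delta$ to this case by writing $j_\delta(g)=\Delta_\delta\cdot j_0(g)$ with $\Delta_\delta=\diag(1,\dots,1,\varpi^\delta)$ and conjugating the unipotent factor by $\Delta_\delta$, which converts the character value $\psi(\lambda_n(\cdot))$ into $\psi(\lambda_n^\delta(\cdot))$ exactly as you describe. The block computation and level-$r$ bookkeeping you flag as the main obstacle are precisely what the paper delegates to loc.\ cit., so your outline matches the intended argument.
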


\begin{proof}
First observe that all relations in the proof of Lemma 2.6 of loc.\ cit.\ eventually are valid modulo $I_{n+1}^{(r)}$ as well. This shows in particular the case $\delta=0$. The general case may be reduced to this case as follows. We have
$$
j_\delta(g)=\Delta_\delta\cdot j_0(g),
$$
where
$$
\Delta_\delta:=\diag(1,\dots,1,\varpi^\delta).
$$
From the aforementioned proof of Lemma 2.6 in loc.\ cit.\ we know that
$$
w(\Delta_\delta u\Delta_\delta^{-1}j_\delta(g)C_{n+1}D_{n+1}w_{n+1})=
w(j_\delta(gB_{n}D_{n}w_n),
$$
which together with
$$
\Psi(\Delta_\delta u\Delta_\delta^{-1})^{-1}=\Psi(\lambda_n(\varpi^{-\delta}\cdot gB_n))
$$
concludes the proof.
\end{proof}

Set $\delta:=r-s$ and $d_\delta:=(\delta\cdot(n+1-i))_{1\leq i\leq n}\in\ZZ^n$. For any integer $\delta'\in\ZZ$ we write $(\delta')\in\ZZ^n$ for the vector which has $\delta'$ in each components.
\begin{lemma}\label{lem:zentrales}
Let $w$ and $v$ be Iwahori invariant $\psi-$ (resp. $\psi^{-1}$-) Whittaker functions on $\GL_n(F)$. For any $n\geq 0$, $e\in\ZZ^n$, $\omega\in W_n$, $l\geq\max\{2n,n-e_1/\nu_\mathfrak{p}(f),\dots,n-e_n/\nu_\mathfrak{p}(f)\}$ and $\delta'\in\ZZ$ we have
$$
\sum_{g\in\varpi^e\omega \mathfrak{R}_{l,n}^\omega}
\!\!\!\psi(\lambda_n^{\delta'}(g))\cdot
w(
g\cdot D_n w_n
)\cdot
v(g)\cdot
\chi(\det(g))\cdot
\absnorm{\det(g)}^{s}=
$$
$$
\begin{cases}
\absNorm(\mathfrak{f})^{\frac{(l-2n+1)n(n+1)}{2}+\frac{1}{2}\sum_{\nu=1}^{n}5\nu^2-3\nu}
\absNorm(\mathfrak{f}_\chi)^{-\frac{n(n+1)}{2}}
(\chi(f_\chi)G(\chi))^{\frac{n(n+1)}{2}}\\
\;\;\;\;\;\;\;\;\;\;\;\;
\cdot w(\varpi^{\delta+\delta'} t_{(\varpi)}^{\delta}) v(\varpi^{\delta+\delta'} t_{(\varpi)}^{\delta})
\chi(\det \varpi^{\delta+\delta'} t_{(\varpi)}^{\delta})
\absnorm{\det \varpi^{\delta+\delta'} t_{(\varpi)}^{\delta}}^{s},\\
\;\;\;\;\;\;\;\;\;\;\;\;\;\;\;\;\;\;\;\;\;\;\;\;\;\;\;\;\;\;\;\;\;\;\;\;
\;\;\;\;\;\;\;\;\;\;\;\;\;\;\;\;\;\;\;\;\;\;\;\;\;\;\;\;\;\;\;\;\;\;\;\;
\text{for $\omega={\bf1}_n$ and $e=d_{\delta}+(\delta')$,}\\
0,\hfill\text{otherwise}.
\end{cases}
$$
\end{lemma}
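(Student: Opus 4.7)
The plan is to adapt the proof of Lemma 2.7 in \citep{januszewski2009} to accommodate the three new features of the statement: arbitrary Iwahori level $r \geq s$, the central parameter $\delta'$, and the offset $\delta = r - s$. The argument would proceed by induction on $n$, using the embeddings $j_\delta$ and Lemma \ref{lem:birchinductionrelation} to peel off one row and column at each inductive step; the base case $n = 0$ is trivial.

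First, I would reduce to the case $\delta' = 0$. Since $\varpi^{\delta'}\mathbf{1}_n$ is central, the substitution $g \mapsto \varpi^{\delta'} g$ translates the exponent vector $e$ by $-(\delta')$, converts $\lambda_n^{\delta'}$ to $\lambda_n = \lambda_n^0$, and pulls out explicit factors from $w$, $v$, $\chi(\det(\cdot))$, and $\absnorm{\det(\cdot)}^s$. After this reduction the non-vanishing case corresponds to $e = d_\delta$ and the evaluation point becomes $\varpi^\delta t_{(\varpi)}^\delta$; reinserting the $\delta'$-dependence recovers the statement.

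Next, unravel the cosets $\mathfrak{R}_{l,n}^\omega$ using the Bruhat-type decomposition of loc.\ cit.. The containment $J_{l,n} \subseteq I_n^{(r)} \cap w_n D_n^{-1} I_n^{(r)} D_n w_n$ noted just before the statement ensures that both $v(g)$ and $w(gD_n w_n)$ are constant on $J_{l,n}$-cosets, even though Iwahori invariance is now only with respect to $I_n^{(r)}$ rather than $I_n^{(1)}$. For each representative parametrized by upper-triangular entries $a_{ij}$, Lemma \ref{lem:birchinductionrelation} applied iteratively via the embeddings $j_\delta$ folds the $\lambda_n^{\delta'}$-phase together with the Whittaker-function shifts into a product of standard additive characters $\psi(a_{ij}/f)$ on the individual entries, twisted by the corresponding contributions $\chi(a_{ij}/f)$ from $\chi(\det g)$.

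The final step is the character-sum evaluation. For each of the $\frac{n(n+1)}{2}$ relevant upper-triangular entries, the inner sum is a twisted additive character sum; it equals $\chi(f_\chi) G(\chi)$ precisely when the induced character on that entry matches $\chi$ on $\OO_F / \mathfrak{f}_\chi$, which forces $\omega = \mathbf{1}_n$ and $e = d_\delta + (\delta')$; otherwise (either for a non-trivial Weyl element or for a different exponent vector) the induced character on some entry has smaller conductor and the sum vanishes by orthogonality. The main obstacle is the careful bookkeeping of the normalization constants: the factor $\absNorm(\mathfrak{f})^{\frac{(l-2n+1)n(n+1)}{2} + \frac{1}{2}\sum_{\nu=1}^n (5\nu^2 - 3\nu)}$ has to be tracked through the iterated Haar-measure changes and the comparison of $\mathfrak{R}_{l,n}^\omega$ with $J_{l,n}$; the factor $\absNorm(\mathfrak{f}_\chi)^{-\frac{n(n+1)}{2}}$ normalizes the $\frac{n(n+1)}{2}$ raw character sums to standard Gauss sums; and the factor $\chi(f_\chi)^{\frac{n(n+1)}{2}}$ corrects between the Gauss-sum normalization of \citep{kazhdanmazurschmidt2000,schmidt2001,januszewski2009} and the one fixed in this paper, which differ by $\chi(f)$ per Gauss sum. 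The shift $\delta$ itself enters through the comparison of $I_n^{(r)}$ with $I_n^{(1)}$ in the coset enumeration, and this is precisely what forces the evaluation point to be $\varpi^\delta t_{(\varpi)}^\delta$ rather than the identity.
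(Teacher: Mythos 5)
Your proposal matches the paper's proof in all essentials: the same induction on $n$ following Lemma 2.7 of \citep{januszewski2009}, with Lemma \ref{lem:birchinductionrelation} driving the inductive step, the containment $J_{l,n}\subseteq I_n^{(r)}\cap w_nD_n^{-1}I_n^{(r)}D_nw_n$ justifying the coset enumeration at level $r$, the conductor-matching/orthogonality dichotomy for the $\frac{n(n+1)}{2}$ twisted character sums forcing $\omega={\bf1}_n$ and $e=d_\delta+(\delta')$, and the same accounting for the three normalization factors. The only (harmless) reorganization is your upfront central substitution $g\mapsto\varpi^{\delta'}g$ to normalize $\delta'=0$, whereas the paper carries $\delta'$ through the induction because the inductive step replaces $\delta'$ by $\delta+\delta'$; note also that the shift $\delta$ in the evaluation point really originates from the mismatch between the level $\mathfrak{f}$ and the conductor $\mathfrak{f}_\chi$ in the Gauss-sum condition, not from comparing $I_n^{(r)}$ with $I_n^{(1)}$.
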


\begin{proof}
We closely follow the proof of Lemma 2.7 in loc.\ cit., and only briefly indicate the necessary modifications here. We proceed again by induction, the the partial sums $Z(r)$ being defined as before, using $\lambda_n^{\delta'}$ instead of $\lambda_n$.

In the argument of $v$ the parameter $r$ might not be dropped in our setting in the course of the proof. However, this modification is straightforward and we cease to indicate it again.

We have the generalized relation
$$
\varpi^e\omega {}^\gamma r\cdot D_n w_n\;\in\;
\varpi^e\omega r\cdot D_n  w_n\cdot I_n^{(r)},
$$
which implies that, up to the abovementioned missing $r$ in the argument of the Whittaker function $v$ and the replacement of $\lambda_n$ by $\lambda_n^{\delta'}$, the formula for $Z(r)$ in the bottom of page 20 of loc.\ cit. remains valid, and the second formula on page 21 now reads
$$
\sum_{\gamma\in S}
\chi\left({}^\gamma{\bf1}_n\right)\cdot
\psi(\lambda_n^{\delta'}(\varpi^e\omega{}^\gamma r))=
$$
$$
\prod_{\nu=1}^{n}
\sum_{\gamma_\nu\in\left(\OO_F/\mathfrak{f}^{l}\right)^\times}
\chi(\gamma_\nu)\cdot
\psi\left(\varpi^{e_n-\delta'} f^{\nu-n-1} r_{\sigma(n)\nu}\cdot\gamma_\nu\right).
$$
Therefore the analogue of conclusion (10) of loc.\ cit.\ here is
$$
e_n\neq (n-\sigma(n))\cdot r+\delta+\delta'\;\Rightarrow\;Z(r)=0.
$$
Hence we may assume that
$$
e_n= (n-\sigma(n))\cdot r+\delta+\delta',
$$
which means that if $\sigma(n)\neq n$, then $e_n>\delta+\delta'$. This then implies
$$
\absnorm{\varpi^{e_n-\delta'} f^{n-n-1} r_{\sigma(n)n}\cdot\gamma_{n}}<\absnorm{f_\chi^{-1}},
$$
yielding again
$$
Z(r)=0.
$$
Therefore we can assume $\sigma(n)=n$ and $e_n=\delta+\delta'$. The implication (11) of loc.\ cit.\ is valid without change, and so we may restrict to the case $r_{1n}=f^{n-1}$ and
$$
r_{n\nu}=-f^{n-\nu},\;\;\;2\leq\nu\leq n
$$
as before. Due to our modified Gau\ss{} sum equation (12) of loc.\ cit.\ now reads
\begin{equation}
\sum_{\gamma\in S}
\chi\left({}^\gamma{\bf1}_n\right)\cdot
\psi(\lambda_n^{\delta'}(\varpi^e\omega{}^\gamma r))=
\chi(B_n)\cdot (\chi(f_\chi)G(\chi))^n\cdot\absNorm(\mathfrak{f})^{l\cdot n}\absNorm(\mathfrak{f}_\chi)^{-n}.
\label{eq:gausssummen}
\end{equation}
The rest of the proof remains valid with the obvious changes that are implied by $e_n=\delta+\delta'$ and the distinction between $f_\chi$ and $f$, thanks to the validity of \eqref{eq:lemma26}, which allows for the same inductive argument to remain intact. To make this precise, the relation \eqref{eq:lemma26} and the identity \eqref{eq:gausssummen} together imply that
$$
\sum_{
r\in
\tilde{\mathfrak{R}}_{l,n}^\omega}Z(r)=
(\chi(f_\chi)G(\chi))^n\cdot
\absNorm(\mathfrak{f})^{l\cdot n}\absNorm(\mathfrak{f}_\chi)^{-n}
\absnorm{\varpi^{\delta+\delta'}}^s\chi(\varpi^{\delta+\delta'})\cdot
\sum_{\tilde{r}\in
\mathfrak{R}_{l,n-1}^{\tilde{\omega}}}\tilde{Z}(\tilde{r}),
$$
where the partial sum $\tilde{Z}(\tilde{r})$ is defined mutatis mutandis as $Z(r)$ for the truncated parameters $\tilde{e}$, $\tilde{\omega}$ and $\tilde{r}$ for $\GL_{n-1}(F)$ and the map $\lambda_{n-1}^{\delta+\delta'}$.

The induction hypothesis shows that those partial sums vanish whenever $\omega\neq{\bf1}_n$ or
$$
e\neq d_{\delta}+(\delta+\delta').
$$
Introduce the map
$$
\tilde{j}_{\delta+\delta'}:\GL_{n-1}(F)\to\GL_n(F),
$$
$$
\tilde{g}\mapsto
\begin{pmatrix}
\tilde{g}&0\\
0&\varpi^{\delta+\delta'}
\end{pmatrix},
$$
We get for $e=d_{\delta}+(\delta')$ and $\omega={\bf1}_n$, using the notation $\tilde{d}_\delta$ for the obvious truncation,
$$
\sum_{g\in\varpi^{d_{\delta}+(\delta')} \mathfrak{R}_{l,n}^{{\bf1}_n}}\!\!
\psi(\lambda_n^{\delta'}(g))
w(g\cdot D_n w_n)
v(g)\chi(g)
\absnorm{\det(g)}^{s}=
\absNorm(\mathfrak{f})^{-\frac{n(n-1)}{2}}\cdot\!\!
\sum_{r\in\tilde{\mathfrak{R}}_{l,n}^\omega}Z(r)=
$$
$$
(\chi(f_\chi)G(\chi))^n\cdot
\absNorm(\mathfrak{f})^{l\cdot n}\cdot
\absNorm(\mathfrak{f}_\chi)^{-n}\cdot
\absnorm{\varpi^{\delta+\delta'}}^s\cdot
\chi(\varpi^{\delta+\delta'})\cdot
$$
$$
\absNorm(\mathfrak{f})^{\frac{(n-1)(n-2)}{2}}\cdot\!\!\!\!\!\!\!\!\!\!
\sum_{\tilde{g}\in\varpi^{\tilde{d}_{\delta}+(\delta+\delta')}\mathfrak{R}_{l,n-1}^{{\bf1}_{n-1}}}\!\!\!\!\!\!
\psi(\lambda_{n-1}^{\delta+\delta'}(\tilde{g}))
w(\tilde{j}(\tilde{g}\cdot D_{n-1} w_{n-1}))
v(\tilde{j}(\tilde{g}))\chi(\tilde{g})
\absnorm{\det(\tilde{g})}^{s}=
$$
$$
(\chi(f_\chi)G(\chi))^n\cdot
\absNorm(\mathfrak{f})^{l\cdot n}\cdot
\absNorm(\mathfrak{f}_\chi)^{-n}\cdot
\absnorm{\varpi^{\delta+\delta'}}^s\cdot
\chi(\varpi^{\delta+\delta'})\cdot
\absNorm(\mathfrak{f}_\chi)^{-\frac{n(n-1)}{2}}\cdot
$$
$$
\absNorm(\mathfrak{f})^{\frac{(n-1)(n-2)}{2}
+
\frac{(l-2(n-1)+1)n(n-1)}{2}+\frac{1}{2}\sum_{\nu=1}^{n-1}5\nu^2-3\nu}
\cdot
$$
$$
(\chi(f_\chi)G(\chi))^{\frac{n(n-1)}{2}}
\cdot w(\varpi^{d_{\delta}+(\delta')}) v(\varpi^{d_{\delta}+(\delta')})
\chi(\varpi^{\tilde{d}_{\delta}+(\delta+\delta')})
\absnorm{\det\varpi^{\tilde{d}_{\delta}+(\delta+\delta')}}^{s},
$$
by our induction hypothesis, and the claim follows.
\end{proof}

\begin{theorem}\label{thm:localbirch}
Let $w$ and $v$ be $\psi$- (resp. $\psi^{-1}$-) Whittaker functions on $\GL_{n+1}(F)$ resp.\ $\GL_n(F)$, Iwahori invariant of level $\mathfrak{f}$, and $\chi:F^\times\to\CC^\times$ a character with conductor $1\neq \mathfrak{f}_\chi\mid\mathfrak{f}$. Then
$$
\int\limits_{U_{n}(F)\backslash{}\GL_{n}(F)}
w\left(
j(g)\cdot t_{(ff_\chi^{-1})}
\cdot h^{(f)}
\right)
v(g\cdot ff_\chi^{-1}t_{(ff_\chi^{-1})})
\chi(\det(g))
\absnorm{\det(g)}^{s-\frac{1}{2}}dg=
$$
$$
\prod_{\nu=1}^{n}
\left({1-\absNorm(\mathfrak{p})^{-\nu}}\right)^{-1}
\cdot
\absNorm(\mathfrak{f})^{-\frac{(n+1)n(n-1)}{6}}\cdot
\absNorm(\mathfrak{f}_\chi)^{-\frac{n(n+1)}{2}}\cdot
(\chi(f_\chi)G(\chi))^{\frac{n(n+1)}{2}}\cdot
$$
$$
w(t_{(ff_\chi^{-1})})\cdot
v(ff_\chi^{-1}\cdot t_{(ff_\chi^{-1})}).
$$
\end{theorem}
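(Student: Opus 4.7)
The plan is to follow the strategy of the Birch Lemma in \citep{januszewski2009}, unfolding the integral over $U_n(F)\backslash\GL_n(F)$ into the partial sums controlled by Lemma \ref{lem:zentrales}. Two new ingredients appear here relative to loc.\ cit.: we work at Iwahori level $\mathfrak{f}=\mathfrak{p}^r$ rather than full level, and we allow the conductor $\mathfrak{f}_\chi$ of $\chi$ to be a strict divisor of $\mathfrak{f}$, corresponding to the shift $\delta = r-s$. These require tracking the extra central translate $t_{(ff_\chi^{-1})}$ and the correction factor $\chi(f_\chi)$ inherent to our normalization of the Gauss sum $G(\chi)$.

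Concretely I would first decompose $U_n(F)\backslash\GL_n(F) = \bigsqcup_{e,\omega,l} U_n(F)\backslash U_n(F)\,\varpi^e\omega\,\mathfrak{R}_{l,n}^\omega$ using the Iwasawa decomposition together with the right Iwahori-invariance of $v$. Then, exploiting $h^{(f)} = t_{(f)}^{-1}h^{(1)}t_{(f)}$ and the $I_{n+1}^{(r)}$-invariance of $w$, I would rewrite $j(g)\cdot t_{(ff_\chi^{-1})}\cdot h^{(f)}$ in the shape $j_\delta(g\,B_n D_n w_n)\cdot I_{n+1}^{(r)}$ up to the additive character value $\psi(\lambda_n^0(gB_n))$; this is precisely Lemma \ref{lem:birchinductionrelation} applied with $\delta'=0$, and it brings the integrand into the form treated by Lemma \ref{lem:zentrales}. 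Applying that lemma, each inner sum vanishes unless $\omega = \mathbf{1}_n$ and $e = d_\delta$, and in the surviving case it evaluates explicitly in terms of $(\chi(f_\chi)G(\chi))^{n(n+1)/2}$, powers of $\absNorm(\mathfrak{f})$ and $\absNorm(\mathfrak{f}_\chi)$, and the Whittaker values at the single element $\varpi^{\delta}t_{(\varpi)}^{\delta}=ff_\chi^{-1}\cdot t_{(ff_\chi^{-1})}$. Finally, summing the surviving contributions over the admissible $l \geq 2n$ collapses, by a standard geometric series argument, into the prefactor $\prod_{\nu=1}^n(1-\absNorm(\mathfrak{p})^{-\nu})^{-1}$ characteristic of the local unramified $L$-factor.

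The main obstacle will be the bookkeeping of exponents of $\absNorm(\mathfrak{f})$ and $\absNorm(\mathfrak{f}_\chi)$ and their mutual cancellations: namely, verifying that the $l$-dependent factor $\absNorm(\mathfrak{f})^{(l-2n+1)n(n+1)/2}$ arising from Lemma \ref{lem:zentrales}, together with the Jacobian of the chosen coset decomposition and the $\absnorm{\det g}^{s-\frac{1}{2}}$ weight, combine to produce the $l$-independent prefactor $\absNorm(\mathfrak{f})^{-(n+1)n(n-1)/6}$ claimed in the statement, and that the residual sum over $l$ has its summation index arranged to give exactly $\prod_{\nu=1}^n(1-\absNorm(\mathfrak{p})^{-\nu})^{-1}$. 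A secondary subtlety is the consistent tracking of the mismatch between $\mathfrak{f}_\chi$ and $\mathfrak{f}$ through the parameter $\delta$ and the shifted Whittaker arguments, which is where the modified Gauss sum convention of this paper (differing from that of \citep{januszewski2009} by a factor $\chi(f)$) pays off and produces the clean form stated in the theorem.
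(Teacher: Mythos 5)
Your proposal follows essentially the same route as the paper: the paper's proof of Theorem \ref{thm:localbirch} is a two-line reduction to the proofs of Theorem 2.1 and Corollary 2.8 of \citep{januszewski2009}, invoking Lemma \ref{lem:zentrales} together with the substitution $g\mapsto g\cdot\varpi^d$, which is precisely the unfolding-plus-vanishing argument you describe (Lemma \ref{lem:birchinductionrelation} with $\delta'=0$ feeding the integrand into the form handled by Lemma \ref{lem:zentrales}, the surviving term at $\omega={\bf 1}_n$, $e=d_\delta$ giving the Whittaker values at $ff_\chi^{-1}t_{(ff_\chi^{-1})}$). The exponent bookkeeping and the origin of the factor $\prod_{\nu=1}^n(1-\absNorm(\mathfrak{p})^{-\nu})^{-1}$ that you flag are exactly the points the cited computation resolves, so your outline matches the intended argument.
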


\begin{proof}
The proof proceeds as the proofs of Theorem 2.1 and Corollary 2.8 of \citep{januszewski2009}, using Lemma \ref{lem:zentrales}, via the substitution $g\mapsto g\cdot\varpi^d$.
\end{proof}

\subsection{The global Zeta integral}\label{sec:birchglobal}

Choose a global field $k$, i.e.\ a finite extension of $\QQ$ or $\FF_p(T)$ and fix an additive character $\psi:k\backslash\Adeles_k\to\CC$ with a local factorization as in \citep[section 3]{januszewski2009}. Let $\pi$ and $\sigma$ be irreducible cuspidal automorphic representations of $\GL_n(\Adeles_k)$ and $\GL_{n-1}(\Adeles_k)$ respectively. Note that $\pi$ and $\sigma$ are always generic \citep{shalika1974}. By $S_\infty$ we denote the set of infinite places of $k$. Let $S$ denote the set of finite places where $\pi$ or $\sigma$ ramifies. Furthermore fix a finite place $\mathfrak{p}$ such that $\pi_\mathfrak{p}$ and $\sigma_\mathfrak{p}$ possess non-zero $I_{n}^{(r)}$ resp.\ $I_{n-1}^{(r)}$-invariant vectors for some fixed $r\geq 0$.

For an overview of the theory of Rankin-Selberg $L$-function $L(s,\pi\times\sigma)$ as developed in \citep{jpss1979a,jpss1979b,jpss1983,jacquetshalika1990s,cogdellpiatetskishapiro1994,cogdellpiatetskishapiro2004} the reader might consult \citep{kazhdanmazurschmidt2000,januszewski2009} for all facts we use.

At any finite place $\mathfrak{q}$ of $k$ we pick a {\em good tensor} $t_\mathfrak{q}^0\in{\mathscr{W}}(\pi_\mathfrak{q},\psi_\mathfrak{q})\otimes{\mathscr{W}}(\sigma_\mathfrak{q},\psi_\mathfrak{q}^{-1})$ in the local Whittaker spaces such that the local Euler factor at $\mathfrak{q}$ is given by the corresponding local zeta integral for $t_\mathfrak{q}^0$, ie.
$$
L(s,\pi_\mathfrak{q}\times\sigma_\mathfrak{q})=\Psi(t_\mathfrak{q}^0,s),
$$
where the right hand side denotes the local Rankin-Selberg zeta integral (or a finite linear combination of those) as in \citep{jpss1983}. We suppose that $t_{\mathfrak{q}}^0=w^0\otimes v^0$ for class-1 $w^0$ and $v^0$ whenever possible (ie. when $\pi$ and $\sigma$ are spherical at $\mathfrak{q}$). By Shintani's explicit formula \citep{shintani1976} this Euler factor is given explicitly by
$$
L(s,\pi_\mathfrak{q}\times\sigma_\mathfrak{q})=\det({\bf 1}_{n(n-1)}-\absNorm(\mathfrak{q})^{-s}A_{\pi_\mathfrak{q}}\otimes A_{\sigma_\mathfrak{q}})^{-1},
$$
for any place $\mathfrak{q}\not\in S\cup S_\infty$, where $A_{\pi_\mathfrak{q}}$ and $A_{\sigma_\mathfrak{q}}$ denote the corresponding Satake parameters.

Now pick any archimedean Whittaker functions (corresponding to $K$-finite vectors) $(w_\mathfrak{q},v_\mathfrak{q})$ for $\mathfrak{q}\in S_\infty$ and form a pair $(w,v)\in\mathscr{W}_0(\pi,\psi)\times\mathscr{W}_0(\sigma,\psi^{-1})$ of global Whittaker functions with factorizations $w=\Otimes\limits_{\mathfrak{q}} w_\mathfrak{q}$, $v=\Otimes\limits_{\mathfrak{q}} v_\mathfrak{q}$. By Fourier transform we have associated automorphic forms $\phi$ on $\GL_{n}(\Adeles_k)$ and $\varphi$ on $\GL_{n-1}(\Adeles_k)$ respectively. For ${\rm Re}(s)\gg 0$ the Euler product
$$
\prod_{\mathfrak{q}}\Psi(w_\mathfrak{q},v_\mathfrak{q},s)=
\int_{\GL_{n-1}(k)\backslash\GL_{n-1}(\Adeles_k)}
\phi
\left(
j(g)
\right)
\varphi(g)\absnorm{\det(g)}^{s-\frac{1}{2}}dg
$$
converges absolutely and has an analytic continuation to $\CC$, as the right hand side is entire. Furthermore we find an entire function $\Omega$, depending only on the Whittaker functions at infinity, such that for the global $L$-function
$$
\Omega(s)\cdot L(s,\pi\times\sigma)=
\prod_{\mathfrak{q}\in S_\infty}\Psi(w_\mathfrak{q}, v_\mathfrak{q},s)\cdot
\prod_{\mathfrak{q}\not\in S_\infty} \Psi(t_\mathfrak{q}^0,s),
$$
for ${\rm Re}(s)\gg 0$. Writing
$$
w_\infty:=\Otimes\limits_{\mathfrak{q}\in S_\infty}w_\mathfrak{q}
$$
and
$$
v_\infty:=\Otimes\limits_{\mathfrak{q}\in S_\infty}v_\mathfrak{q}
$$
we set
$$
(w_\infty\otimes v_\infty)\otimes\Otimes\limits_{\mathfrak{q}} t_\mathfrak{q}^0=\sum_{\iota}w_\iota\otimes v_\iota,
$$
where any $w_\iota\otimes v_\iota$ is a product of pure tensors, we deduce that with the corresponding associated automorphic forms $(\phi_\iota,\varphi_\iota)$ we get
$$
\Omega(w_\infty\otimes v_\infty,1)(s)\cdot L(s,\pi\times\sigma)=
$$
$$
\sum_{\iota}\int_{\GL_{n-1}(k)\backslash\GL_{n-1}(\Adeles_k)}
\phi_\iota
\left(
j(g)
\right)
\varphi_\iota(g)\absnorm{\det(g)}^{s-\frac{1}{2}}dg,
$$
for the entire function $\Omega(w_\infty\otimes v_\infty,1)(s)=:\Omega(s)$, depending on our choices, which is $\CC$-linear in the first argument, the second argument being reserved for a character of $\GL_1(k\otimes_\QQ\RR)$.

In order to study the twisted $L$-function
$$
L(s,(\pi\times\sigma)\otimes\chi):=L(s,(\pi\otimes\chi)\times\sigma)
$$
for a quasi-character $\chi$ with $\mathfrak{p}$-power conductor $\mathfrak{f}_\chi$ we modify the local Whittaker functions at $\mathfrak{p}$ and allow Iwahori invariant pairs only. For this purpose we write $L^{(\mathfrak{p})}(s,(\pi\times\sigma)\otimes\chi)$ for the above $L$-function with the $\mathfrak{p}$-Euler factor removed.  Note that if the pair $(\pi,\sigma)$ is unramified at $\mathfrak{p}$, then this Euler factor is trivial whenever $\chi$ has non-trivial conductor.

\begin{theorem}\label{thm:globalbirch}
For any choice of pair of Whittaker functions $(w_\infty,v_\infty)$ at infinity, and any pair $(w_\mathfrak{p},v_\mathfrak{p})$ of $I_n^{(r)}$- resp. $I_{n-1}^{(r)}$-invariant Whittaker functions on $\GL_{n}(k_\mathfrak{p})$ and $\GL_{n-1}(k_\mathfrak{p})$ respectively, there exists an entire function $\Omega(w_\infty\otimes v_\infty,\chi_\infty)$, only depending on $(w_\infty\otimes v_\infty)$ and $\chi_\infty$, such that for any quasi-character $\chi:k^\times\backslash\Adeles_k^\times\to\CC^\times$ with non-trivial $\mathfrak{p}$-power conductor $\mathfrak{f}_\chi\mid\mathfrak{f}=\mathfrak{p}^r$ we have
$$
\Omega(w_\infty\otimes v_\infty,\chi_\infty)(s)
\delta^{(r)}(w_\mathfrak{p}\otimes v_\mathfrak{p},\chi_\mathfrak{p})
(\chi(f_\chi)G(\chi))^{\frac{n(n-1)}{2}}\cdot
$$
$$
\absNorm(\mathfrak{f}_\chi)^{-\frac{n(n-1)}{2}}\cdot
\absNorm(\mathfrak{f})^{-\frac{n(n-1)(n-2)}{6}}\cdot
L^{(\mathfrak{p})}(s,(\pi\otimes\chi)\times\sigma)=
$$
$$
\sum_{\iota}
\int\limits_{\GL_{n-1}(k)\backslash\GL_{n-1}(\Adeles_k)}
\!\!\!\!\!\!\!\!\!\!\!\!\!\!\!\!
\phi_\iota
\left(
j(g) t_{(ff_\chi^{-1})}
h^{(f)}
\right)
\varphi_\iota(g ff_\chi^{-1}t_{(ff_\chi^{-1})})
\chi(\det(g))
\absnorm{\det(g)}^{s-\frac{1}{2}}
dg,
$$
where
$$
\delta^{(r)}(w_\mathfrak{p}\otimes v_\mathfrak{p},\chi_\mathfrak{p}):=
w_{\mathfrak{p}}(t_{(ff_\chi^{-1})})
\cdot
v_{\mathfrak{p}}(ff_\chi^{-1}\cdot t_{(ff_\chi^{-1})})
\cdot
\prod_{\nu=1}^{n}\left(1-\absNorm(\mathfrak{p})^{-\nu}\right)^{-1}.
$$
\end{theorem}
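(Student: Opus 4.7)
My plan is to reduce the global statement to the local Birch Lemma (Theorem \ref{thm:localbirch}) via the standard Jacquet--Piatetski-Shapiro--Shalika (JPSS) global-to-local factorization, combined with the spherical calculation at the unramified finite places and a prescribed choice of shift at $\mathfrak{p}$. The crucial observation is that the Whittaker model of $\pi\otimes\chi$ is $\{g\mapsto\chi(\det g)w(g):w\in\mathscr W(\pi,\psi)\}$, so that the global JPSS integral attached to $(\pi\otimes\chi,\sigma)$ equals, for $\Re(s)\gg 0$,
$$
\prod_{\mathfrak{q}}\Psi(w_{\mathfrak{q}},v_{\mathfrak{q}},\chi_{\mathfrak{q}},s)
=\int\limits_{\GL_{n-1}(k)\backslash\GL_{n-1}(\Adeles_k)}\phi(j(g))\varphi(g)\chi(\det g)\absnorm{\det g}^{s-\tfrac12}dg,
$$
which analytically continues to $\CC$.

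The next step is to feed the correct choice of local data into each factor. At places $\mathfrak{q}\notin S\cup S_\infty\cup\{\mathfrak{p}\}$, where $\chi_\mathfrak{q}$ is unramified, the good tensor $t_\mathfrak{q}^0=w^0\otimes v^0$ gives the local twisted Euler factor $L(s,(\pi_\mathfrak{q}\times\sigma_\mathfrak{q})\otimes\chi_\mathfrak{q})$ by Shintani's formula. At the ramified finite places $\mathfrak{q}\in S\setminus\{\mathfrak{p}\}$, the local component $\chi_\mathfrak{q}$ is still unramified (since $\mathfrak{f}_\chi\mid\mathfrak{p}^\infty$), so the same good tensors give the twisted local $L$-factor. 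At $\mathfrak{p}$ we substitute the input Whittaker functions by their right translates $\pi_\mathfrak{p}(t_{(ff_\chi^{-1})}h^{(f)})w_\mathfrak{p}$ and $\sigma_\mathfrak{p}(ff_\chi^{-1}t_{(ff_\chi^{-1})})v_\mathfrak{p}$, and apply Theorem \ref{thm:localbirch} to evaluate the local zeta integral explicitly. At the archimedean places the integrals are absorbed into the entire function $\Omega(w_\infty\otimes v_\infty,\chi_\infty)(s)$, which is linear in the first argument.

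Combining, the tensor $(w_\infty\otimes v_\infty)\otimes\bigotimes_\mathfrak{q} t_\mathfrak{q}^0$ is a finite sum $\sum_\iota w_\iota\otimes v_\iota$ of pure tensors because the operations at $\mathfrak{p}$ and at infinity need not preserve the pure-tensor structure in the global Whittaker model. For each summand the JPSS identity applies, yielding the global integral on the right-hand side of the theorem. The left-hand side is obtained by taking the product of local factors: $L^{(\mathfrak{p})}(s,(\pi\times\sigma)\otimes\chi)$ from the places $\mathfrak{q}\neq\mathfrak{p}$, the explicit local Birch constants from $\mathfrak{p}$, and the archimedean contribution $\Omega(w_\infty\otimes v_\infty,\chi_\infty)(s)$. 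The volume factor $\prod_{\nu=1}^{n}(1-\absNorm(\mathfrak{p})^{-\nu})^{-1}$ and the boundary evaluations $w_\mathfrak{p}(t_{(ff_\chi^{-1})})\cdot v_\mathfrak{p}(ff_\chi^{-1}\cdot t_{(ff_\chi^{-1})})$ are precisely those collected into $\delta^{(r)}(w_\mathfrak{p}\otimes v_\mathfrak{p},\chi_\mathfrak{p})$.

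The main obstacle is the arithmetic bookkeeping at $\mathfrak{p}$: Theorem \ref{thm:localbirch} is stated for $\GL_{n+1}\times\GL_n$, while the global theorem is stated for $\GL_n\times\GL_{n-1}$, so one must apply it with the index shifted down by one. Consequently the exponents $n(n-1)/2$, $(n+1)n(n-1)/6$ and $n(n-1)(n-2)/6$ appearing in the statement must be matched against the shifted exponents $n(n+1)/2$, etc., coming from the local theorem. Verifying that the Gauss-sum powers $(\chi(f_\chi)G(\chi))^{n(n-1)/2}$ and the various powers of $\absNorm(\mathfrak{f})$ and $\absNorm(\mathfrak{f}_\chi)$ balance correctly — with the $\mathfrak{p}$-Euler factor correctly \emph{absent} from the right-hand side, as reflected by the notation $L^{(\mathfrak{p})}$ — is the only nontrivial check; it is, as in \citep{januszewski2009}, a direct consequence of plugging the formula of Theorem \ref{thm:localbirch} into the factorization.
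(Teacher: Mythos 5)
Your proposal is correct and follows essentially the same route as the paper: unfold the global Rankin--Selberg integral for the twisted pair into a product of local zeta integrals, use the good tensors at the finite places away from $\mathfrak{p}$, apply Theorem \ref{thm:localbirch} (with the index shifted from $\GL_{n+1}\times\GL_n$ to $\GL_n\times\GL_{n-1}$) to the translated Whittaker data at $\mathfrak{p}$, and absorb the archimedean factors into $\Omega(w_\infty\otimes v_\infty,\chi_\infty)$ --- which is exactly the reduction the paper makes by citing the proof of Theorem 3.1 of \citep{januszewski2009} and the Global Birch Lemma of \citep{kazhdanmazurschmidt2000}. The only cosmetic slip is your explanation of the sum over $\iota$: it arises because the good tensors $t_\mathfrak{q}^0$ at the ramified places in $S$ are finite linear combinations of pure tensors, not because of the operations at $\mathfrak{p}$ or at infinity.
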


\begin{proof}
It is clear that to compute the twisted $L$-function we might choose at any infinite place $\mathfrak{q}$ the pair of local Whittaker functions $(\chi_\mathfrak{q}(\det)\cdot w_\mathfrak{q}, v_\mathfrak{q})$ for even $n$ or $(w_\mathfrak{q}, \chi_\mathfrak{q}(\det)\cdot v_\mathfrak{q})$ for odd $n$. This data will account for $\Omega(w_\infty\otimes v_\infty,\chi_\infty)$. The rest of the argument is reduced to Theorem \ref{thm:localbirch} as in \citep[Proof of Theorem 3.1]{januszewski2009}, which in turn is a variant of the standard argument in the proof of the Global Birch Lemma in \cite{kazhdanmazurschmidt2000}.
\end{proof}

Let $U_\mathfrak{q}:=\Gm(\OO_{k,\mathfrak{q}})$ for nonarchimedean $\mathfrak{q}$ and define $U_\mathfrak{q}:=\Gm(k_\mathfrak{q})^0$ for $\mathfrak{q}\in S_\infty$. For an id\`ele $\alpha\in\Adeles_k^\times$ we let $C_{\mathfrak{f}}$ denote the preimage of
$$
k^\times\backslash{}k^\times\cdot(1+\mathfrak{f})\cdot\prod_{\mathfrak{q}\nmid\mathfrak{f}} U_\mathfrak{q}
$$
under the determinant map
$$
\det: \GL_{n}(k)\backslash{}\GL_{n}(\Adeles_k)\to k^\times\backslash{}\Adeles_k^\times.
$$
For any id\`ele $x\in\Adeles_k^\times$ we set
$$
d_{(x)}:=\diag(x,1,\dots,1).
$$
As a consequence of Theorem \ref{thm:globalbirch} we have
\begin{corollary}\label{kor:globalbirch}
For any $\chi$ of finite order and conductor $\mathfrak{f}_\chi\mid\mathfrak{f}$ and any $\nu\in\ZZ$ we have
$$
\Omega(w_\infty\otimes v_\infty,\chi_\infty)(\frac{1}{2}+\nu)
\delta^{(r)}(w_\mathfrak{p}\otimes v_\mathfrak{p},\chi_\mathfrak{p})
(\chi(f_\chi)G(\chi))^{\frac{n(n-1)}{2}}\cdot
$$
$$
\absNorm(\mathfrak{f}_\chi)^{-\frac{n(n-1)}{2}}
\absNorm(\mathfrak{f})^{-\frac{n(n-1)(n-2)}{6}}\cdot
L^{(\mathfrak{p})}(\frac{1}{2}+\nu,(\pi\otimes\chi)\times\sigma)=
$$
$$
\sum_{\iota,x}
\chi(x)\cdot
\int_{C_{\mathfrak{f}}}
\phi_\iota
\left(
j(gd_{(x)})\cdot t_{(ff_\chi^{-1})})
\cdot
h^{(f)}
\right)
\cdot
\varphi_\iota(gd_{(x)}\cdot ff_\chi^{-1}t_{(ff_\chi^{-1})})\absnorm{\det(gd_{(x)})}^\nu
dg.
$$
Here $x$ runs through a system of representatives of the ray class group $k^\times\backslash\Adeles_k^\times/(1+\mathfrak{f})\prod_{\mathfrak{q}\neq\mathfrak{p}} U_\mathfrak{q}$.
\end{corollary}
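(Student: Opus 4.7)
The plan is to deduce the corollary from Theorem~\ref{thm:globalbirch} specialised at $s=\frac{1}{2}+\nu$ by decomposing the global integral along the determinant map.

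I would start from the coset decomposition
$$
\GL_{n-1}(k)\backslash\GL_{n-1}(\Adeles_k)=\bigsqcup_x C_{\mathfrak{f}}\cdot d_{(x)},
$$
where $x$ runs through the specified system of representatives of the ray class group $k^\times\backslash\Adeles_k^\times/(1+\mathfrak{f})\prod_{\mathfrak{q}\neq\mathfrak{p}}U_\mathfrak{q}$. This decomposition is available because the determinant map $\GL_{n-1}(k)\backslash\GL_{n-1}(\Adeles_k)\to k^\times\backslash\Adeles_k^\times$ is surjective, each $d_{(x)}$ satisfies $\det d_{(x)}=x$, and by construction $C_\mathfrak{f}$ is precisely the preimage of $k^\times(1+\mathfrak{f})\prod_{\mathfrak{q}\nmid\mathfrak{f}}U_\mathfrak{q}$.

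Next, I would verify that the finite-order idele class character $\chi\circ\det$ is constant, with value $\chi(x)$, on each right coset $C_\mathfrak{f}\cdot d_{(x)}$. Since $\chi$ is a Hecke character it is trivial on $k^\times$; since its conductor $\mathfrak{f}_\chi$ divides $\mathfrak{f}=\mathfrak{p}^r$ it is trivial on $(1+\mathfrak{f})$ at $\mathfrak{p}$ and on the unit groups $U_\mathfrak{q}$ at all finite places $\mathfrak{q}\neq\mathfrak{p}$; and since $\chi$ is of finite order it is trivial on $k_\mathfrak{q}^{\times,0}=U_\mathfrak{q}$ at each archimedean place $\mathfrak{q}$. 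Together these generate $\det(C_\mathfrak{f})$, so $\chi$ is trivial on $C_\mathfrak{f}$.

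With the decomposition and this constancy in hand, I would specialise Theorem~\ref{thm:globalbirch} at $s=\frac{1}{2}+\nu$ and split its right-hand side as
$$
\sum_\iota\sum_x\chi(x)\int_{C_\mathfrak{f}}\phi_\iota\!\left(j(g\,d_{(x)})\,t_{(ff_\chi^{-1})}h^{(f)}\right)\varphi_\iota\!\left(g\,d_{(x)}\cdot ff_\chi^{-1}t_{(ff_\chi^{-1})}\right)\absnorm{\det(g\,d_{(x)})}^{\nu}dg,
$$
parameterising $C_\mathfrak{f}\cdot d_{(x)}$ by $C_\mathfrak{f}$ via right translation by $d_{(x)}$; unimodularity of $\GL_{n-1}(\Adeles_k)$ ensures the Haar measure is preserved. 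The corollary follows by matching this with the left-hand side of Theorem~\ref{thm:globalbirch} at $s=\frac{1}{2}+\nu$.

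The whole argument is essentially an unfolding along $\det$; the only conceptual step is the verification that $\chi$ is constant on each coset, which is the main (mild) obstacle and hinges on combining the conductor assumption at $\mathfrak{p}$, the Hecke character property at $k^\times$, and the finite-order assumption at archimedean places. The rest is bookkeeping.
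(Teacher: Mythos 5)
Your proposal is correct and is exactly the deduction the paper intends (the paper states the corollary as an immediate consequence of Theorem~\ref{thm:globalbirch} without writing out the argument): specialise at $s=\frac{1}{2}+\nu$, decompose $\GL_{n-1}(k)\backslash\GL_{n-1}(\Adeles_k)$ into the determinant fibers $C_\mathfrak{f}\cdot d_{(x)}$ over the ray class group, and use that $\chi\circ\det$ is constant equal to $\chi(x)$ on each fiber because $\chi$ is a finite-order Hecke character of conductor dividing $\mathfrak{f}=\mathfrak{p}^r$. The verification of constancy and the measure-preservation under right translation are exactly the points that need checking, and you handle both.
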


\section{Arithmetic groups and relative Lie algebra cohomology}\label{sec:liecoh}

The historically inclined reader might consult the fundamental articles of Matsushima and Murakami \citep{matsushimamurakami1963,matsushimamurakami1965}. The modern main reference is of course \citep{book_borelwallach1980}. We assume here $k$ to denote a number field. We write $G_n$ for the restricition of scalars of $\GL_n$ in the extension $k/\QQ$. Let $K$ denote a maximal compact subgroup of $G:=G_n(\RR)$ and $\theta$ the corresponding (algebraic) Cartan involution. We write $\lieg=\liegl_n\otimes\CC$ for the complexified Lie algebra of $G$ and $\liek$ for the complexified Lie algebra of $K$. We can identify $\liek$ with the $(+1)$-eigen space of $\theta$ acting on $\lieg$, and likewise we have a $(-1)$-eigen space that we denote $\liep$. Then
$$
\lieg=\liep\oplus\liek,
$$
i.e.\ $\liep$ is canonically identified with $\lieg/\liek$.

\subsection{Relative Lie algebra cohomology}
Pick a $(\lieg,K)$-module $(\pi,V)$, for example the space of $K$-finite vectors in an automorphic representation of $G_n$. By the very definition of $(\lieg,K)$-modules, the compatibility of the actions of $\lieg$ and $K$ on $V$ reads
$$
\pi(z)\cdot\pi(g)\cdot v=\pi(\Ad(z)(g))\cdot\pi(z)\cdot v
$$
for all $g\in\lieg$, $z\in K$. Furthermore, as $v$ is contained in a finite dimensional $K$-stable subspace $W\subseteq V$, we have
$$
\pi(L(z))\cdot w=L(\pi_W(z))\cdot w
$$
for all $w\in W$, where $\pi_W$ denotes the representation of $K$ on $W$ induced by $\pi$. In other words the differential of $\pi_W$ is given by $\pi_\liek$.

Consider the complex
$$
C^q(\lieg,\liek; V):=\Hom_\liek(\bigwedge^q\liep,V),
$$
with the differential $d:C^q\to C^{q+1}$ given by
$$
df(x_0\wedge\dots\wedge x_q)=
\sum_{i} (-1)^i\cdot x_i\cdot f(x_0\wedge\dots\wedge\hat{x}_i\wedge\dots\wedge x_q)+
$$
$$
\sum_{i<j}(-1)^{i+j} f([x_i,x_j]\wedge x_0\wedge\dots\wedge\hat{x}_i\wedge\dots\wedge\hat{x}_j\wedge\dots\wedge x_q).
$$
This gives rise to classical relative Lie algebra cohomology denoted $H^q(\lieg,\liek; V)$. If $V$ is admissible, then this cohomology is finite dimensional, as the complex itself is finite dimensional.

\subsection{$(\lieg,K)$-cohomology}

Now $K$ naturally acts on $\liep$ by the adjoint action and it acts naturally on $V$ as well. So we have another complex
$$
C^q(\lieg, K; V):=\Hom_K(\bigwedge^q\liep,V),
$$
which eventually turns out to be a subcomplex of the former. To identify this subcomplex, note that $\pi_0(G)$ can be canonically identified with $K/K^0$ and the latter group acts naturally on the first complex (again via the adjoint representation on $\liep$). Denote by
$$
i:C^q(\lieg,K; V)\to C^q(\lieg,\liek; V)
$$
the canonical inclusion.

Then for any $f$ in $C^q(\lieg,\liek; V)$ an any $x\in\bigwedge^q \liep$ we know that $f(x)$ is contained in a finite dimensional $K$-stable subspace of $V$. Furthermore $K^0$ acts trivially on $f$ and moreover
$$
C^q(\lieg,\liek; V)=C^q(\lieg,K^0; V):=\Hom_{K^0}(\bigwedge^q\liep, V).
$$
So we eventually get a well defined action of $\pi_0(G)$ on this space. We conclude that for this action
$$
C^q(\lieg,K; V)=\Hom_K(\bigwedge^q\liep, V)=C^q(\lieg,\liek; V)^{\pi_0(G)}.
$$
Finally for the respective cohomologies we get
$$
H^q(\lieg,K; V)=H^q(\lieg,\liek; V)^{\pi_0(G)},
$$
as taking invariants of a semisimple group action is plainly exact.

\subsection{de Rham isomorphism}

Now suppose that $V^{\rm smooth}$ is a smooth admissible representation of $G$, which contains $V$ as $K$-finite vectors. Denote by $\Omega^q(G/K; V^{\rm smooth})$ the space of smooth $V^{\rm smooth}$-valued differential $q$-forms, i.e.\ the space of smooth sections $\omega:G/K\to \bigwedge^q T^*(G/K)\otimes V^{\rm smooth}$. Then $\Omega^q(G/K; V^{\rm smooth})$ becomes a complex with the natural exterior differential. Furthermore we have a natural action of $G$ given by
$$
(g\cdot\omega)_x(X):=g\cdot\omega_{g^{-1}x}(g^{-1}X),
$$
for any $x\in G/K$ and any $X\in \bigwedge^q T_x(G/K)$.

As $G$ acts transitively on $G/K$, we have a natural isomorphism
$$
\Omega^q(G/K; V^{\rm smooth})^G\cong C^q(\lieg,K; V),
$$
given by evaluation
$$
\omega\mapsto\omega_e,
$$
by virtue of the identification $T_e(G/K)=\liep$. Indeed, due to the admissibility the canonical inclusion induces for the $K$-invariant origin $e\in G/K$ an isomorphism of stalks
$$
\left(\bigwedge^q T^*_e(G/K)\otimes V\right)^K\cong
\left(\bigwedge^q T^*_e(G/K)\otimes V^{\rm smooth}\right)^K.
$$

As the differentials of these two complexes are compatible, we get a natural isomorphism
$$
H^q(\Omega^{\bullet}(G/K; V^{\rm Smooth})^G)\cong
H^q(\lieg, K; V)
$$
in cohomology. A similar statement holds for $(\lieg,\liek)$-cohomology. The same reasoning yields canonical isomorphisms
$$
H^q(\Omega^{\bullet}(G/K; V^{\rm Smooth}))\cong
H^q(\Omega^{\bullet}(G/K; V^{\rm Smooth})^{G^0})\cong
H^q(\lieg, \liek; V),
$$
the first isomorphism being classically due to de Rham.

\subsection{Component action}

We have seen that $H^q(\lieg, K; V)$ is the subspace of $H^q(\lieg, \liek; V)$ where $\pi_0(K)=\pi_0(G)$ acts trivially. In our applications it turns out that we also need to consider nontrivial eigen spaces of this action.

Let $\varepsilon$ be a character of $\pi_0(K)$, that we consider also as a character of $K$ via the projection $K\to\pi_0(K)$. Then we have a corresponding eigen space
$$
H^q(\lieg,\liek; V)_\varepsilon:=\{h\in H^q(\lieg,\liek; V)\mid \forall k_0\in\pi_0(K):k_0h = \varepsilon(k_0)\cdot h\}.
$$
Obviously for the trivial character we get
$$
H^q(\lieg,\liek; V)_{\bf 1}= H^q(\lieg,K; V).
$$
We consider $\varepsilon$ as the one-dimensional $(\lieg,K)$-module, on which $(\lieg,K^0)$ acts trivally and on which $\pi_0(K)$ acts via $\varepsilon$. This corresponds to the pullback of $\varepsilon$ along $G\to\pi_0(K)$.

In our application $V$ comes from the infinity component of an irreducible cuspidal automorphic representation and furthermore has non-trivial $(\lieg,K^0)$-cohomology. Then two cases will arise. The first one concerns even $n$. In this case $V$ will be isomorphic to $V\otimes\varepsilon$ for all $\varepsilon$ and we have non-vanishing eigen spaces in cohomology, all of the same dimensions for any $\varepsilon$. For odd $n$ this is not the case and there is a unique choice of $\varepsilon$ such that the eigen space in cohomology does {\em not} vanish.

\subsection{Cohomology of arithmetic groups}

We keep the above notation and let $\rho:G\to E$ denote a smooth finite-dimensional representation of $G$. We can consider the smooth $G$-module $V^{\rm smooth}\otimes E$. Fix an arithmetic subgroup $\Gamma$ of $G$ contained in $G^0$. Then $\rho$ induces a finite dimensional representation of $\Gamma$, that we also denote $E$. It is classical that we have canonically
$$
H^q(\Gamma; E)\cong H^q(\Omega^\bullet(G/K; E)^\Gamma).
$$
Assume for simplicity that $\Gamma$ acts freely. The general case can be deduced from this case via the Hochschild-Serre spectral sequence, as this implies in particular that for a (torsion-free) normal subgroup $\Gamma'\subseteq\Gamma$ of finite index there is a natural isomorphism
$$
H^q(\Gamma; E)=H^q(\Gamma'; E)^{\Gamma/\Gamma'},
$$
and we have a similar statement for the complex of smooth $E$-valued differential forms.

We know that $G/K$ is an Euclidean space, in particular it is contractible. As $\Gamma$ is torsion-free, it acts freely on this space and $\Gamma\backslash G/K$ is an Eilenberg-MacLane space $K(\Gamma,1)$. It results that
$$
H^q(\Gamma; E)\cong H^q(\Gamma\backslash G/K; \underline{E}),
$$
where $\underline{E}$ is the local system associated to $E$, that we consider as a sheaf on $\Gamma\backslash G/K$. For any open $U\subseteq \Gamma\backslash G/K$ the sections are given by locally constant functions that are invariant under $\Gamma$, i.e.
$$
\underline{E}(U)=\{\phi:\Gamma U\to E\mid \forall x\in \Gamma U: f(x)=\rho(\gamma)(f(\gamma^{-1}x))\},
$$
where $\Gamma U$ denotes the preimage of $U$ under the canonical projection $\pi:G/K\to\Gamma\backslash G/K$. Note that this sheaf cohomology can be calculated by our de Rham complex, as the restriction maps are locally constant. For the latter the pullback
$$
\pi^*:\Omega^q(\Gamma\backslash G/K; \underline{E})\to\Omega^q(G/K; E),
$$
$$
\omega\;\mapsto\;\omega\circ\pi
$$
along $\pi$, where on the right hand side we consider $E$ as a constant sheaf, induces an isomorphism
$$
\Omega^q(\Gamma\backslash G/K; \underline{E})\cong\Omega^q(G/K; E)^\Gamma.
$$
We conclude that
$$
H^q(\Gamma; E)\cong 
H^q(\Omega^\bullet(G/K; E)^\Gamma).
$$

On the other hand translation by $g^{-1}\in G$ yields a natural identification of tangent spaces
$$
T_g(G)\to T_e(G)=\lieg.
$$
Therefore we can canonically identify
$$
\Omega^q(G; E)=\Hom(\bigwedge^q\lieg,\mathscr C^\infty(G; E)).
$$
Now $\Gamma$ acts on these spaces in a compatible manner by virtue of a trivial action on the tangent spaces. Then
$$
\omega\in\Omega^q(G; E)^\Gamma
$$
if and only of for any $\gamma\in\Gamma$, $x\in G$, $X\in\bigwedge^q T_x(G)=\lieg$,
$$
\omega_x(X)=\rho(\gamma)(\omega_{\gamma^{-1}x}(X)).
$$
In particular the above identification yields a canonical isomorphism
$$
\Omega^q(G; E)^\Gamma=\Hom(\bigwedge^q\lieg,\mathscr I_\Gamma^\infty(G; E)),
$$
where
$$
\mathscr I_\Gamma^\infty(G; E):=\{\phi\in\mathscr C^\infty(G; E)\mid\forall\gamma\in\Gamma,x\in G: \phi(x)=\rho(\gamma)(\phi(\gamma^{-1}x))\}.
$$
On this space $G$ acts by right translation, i.e.\ this gives the representation smoothly induced from the restriction of $\rho$ to $\Gamma$. Our identifications are compatible with the differentials of our complexes, such that we get
$$
H^q(\Omega^\bullet(G; E)^\Gamma)\cong H^q(\lieg;\mathscr I_\Gamma^\infty(G; E)),
$$
whence our first step towards identifying the cohomology of the arithmetic group with relative Lie algebra cohomology. Now the map
$$
\phi\mapsto \phi^0:g\mapsto\rho(g)^{-1}(\phi(g))
$$
gives an identification
$$
\mathscr I_\Gamma^\infty(G, E)\cong
\mathscr C^\infty(\Gamma\backslash G; E)=
\mathscr C^\infty(\Gamma\backslash G)\otimes E,
$$
of $G$-modules, where $G$ acts on the right hand side by right translation and $\rho$ respectively. So we get an isormophism
$$
H^q(\Gamma\backslash G;\underline{E})\cong
H^q(\Omega^\bullet(G; E)^\Gamma)\cong H^q(\lieg;\mathscr I_\Gamma^\infty(G; E))
\cong H^q(\lieg;\mathscr C^\infty(\Gamma\backslash G)\otimes E).
$$

Eventually the same procedure works for symmetric spaces and relative Lie algebra cohomology. To be more precise, write $\pi:G\to G/K$ for the canonical projection. Then pullback along $\pi$ induces an isomorphism of complexes
$$
\pi^*:\Omega^q(G/K; E)^\Gamma\cong C^q(\lieg, K; \mathscr I_\Gamma^\infty(G; E))\cong C^q(\lieg, \liek; \mathscr I_\Gamma^\infty(G^0; E)),
$$
which in turn induces an isomorphism in cohomology:
$$
H^q(\Gamma; E)\cong
H^q(\Omega^\bullet(G/K; E)^\Gamma)\cong H^q(\lieg, \liek; \mathscr I_\Gamma^\infty(G^0;E))\cong
$$
$$
H^q(\lieg, \liek; \mathscr C^\infty(\Gamma\backslash G^0)\otimes E)\cong
H^q(\lieg, K; \mathscr C^\infty(\Gamma\backslash G)\otimes E).
$$
This construction can be exploited mutatis mutandis with growth conditions and compact support, i.e.\ we always have an isomorphism
$$
H_*^q(\Gamma\backslash G/K; \underline{E})\cong H^q(\lieg, K; \mathscr C_*^\infty(\Gamma\backslash G)\otimes E),
$$
where $*\in\{{\rm c},{\rm cusp}, {\rm fd},{\rm mg}\}$. The beauty of this isomorphism is that the right hand side can be computed purely algebraically, because it does not change when restricting to $K$-finite vectors. Finally we note that once a complex structure on the symmetric space is available, this result may be refined and shown to naturally respect the respective Hodge decompositions.

In suitable situations it can even be guaranteed that
$$
H^q(\lieg, K^0; V\otimes E)=(\bigwedge^q\liep^*\otimes V\otimes E)^{K^0},
$$
which is the case when $V^{\rm smooth}$ is unitary and $E$ an algebraic representation. The reason being that, as is well known, the Casimir operators of the representations act by scalars, so we might apply \citep[Chapter II, Proposition 3.1]{book_borelwallach1980} to the connected component, which yields the result for $(\lieg,\liek)$-cohomology. Taking $K$-invariants we might recover $(\lieg,K)$-cohomology. This is a vast generalization of a classical result of E. Cartan saying that on $G/K$ all $G^0$-invariant forms are harmonic, closed and co-closed.

We pick a closed connected $\theta$-stable subgroup $S$ in the center of $G$, with Lie algebra $\lies$. Then we consider the manifold $G/KS$ whose tangent space at the identity is given by
$$
\lieg/(\liek+\lies)=\liep/(\liep\cap\lies).
$$
Obviously we might assume $\lies\subseteq\liep$, which is the same as to say that all compact subgroups in $S$ are trivial. We assume that $S\cap\Gamma$ is trivial. This is in particular the case if $S$ is the connected component of the $\RR$-valued points of a $\QQ$-split torus in the center of $G_n$. Then all of the above results hold with the following modifications. We define the $(\lieg,KS)$-cohomology as the cohomology of the complex
$$
C^q(\lieg,K; V)^S=C^q(\lieg,\liek+\lies; V)^{\pi_0(G)},
$$
for which we write $H^q(\lieg,KS; V)$. Then
$$
H^q(\Gamma; E)\cong
H^q(\Omega^\bullet(G/KS; E)^\Gamma)\cong H^q(\lieg, K; \mathscr I_\Gamma^\infty(G/S;E))\cong
$$
$$
H^q(\lieg/\lies, KS; \mathscr C^\infty(\Gamma\backslash G/S)\otimes E),
$$
by the very same argument.

\section{Cohomological construction of the distribution}\label{sec:distribution}

In this section we suppose that $k$ is totally real and that $\pi$ and $\sigma$ are {\em regular} and {\em algebraic} in the sense of \citep[Definition 1.8, Definition 3.12]{clozel1990}. We use the modular symbols constructed in \citep{januszewski2009} with possibly nontrivial coefficients to deduce that the distribution above is in fact algebraic and $\mathfrak{p}$-adically bounded, i.e.\ a $\mathfrak{p}$-adic measure. For the mere treatment of algebraicity over $\QQ$ see \citep{kastenschmidt2008} and for the case of trivial coefficients over an arbitrary number field consult \citep{januszewski2009}.

\subsection{Cohomological interpretation of the period integrals}\label{sec:cohint}

We identify $S_\infty$ with the set of embeddings $k\to\RR$. We fix the standard torus $T_n$ in $\GL_n$ and consider all root data for $\GL_n$ resp.\ $G_n$ with respect to $T_n$ resp.\ $\res_{k/\QQ}T_n$ and the ordering induced by the standard Borel subgroup $B_n$ resp.\ $\res_{k/\QQ}B_n$. We make the usual standard choice of simple roots, and we choose as basis of characters the component projections $\chi_j:T_n\to\Gm$, $(t_i)\mapsto t_j$. Furthermore for any $\iota\in S_\infty$ we are given an irreducible representation $\rho_{{\boldsymbol{\mu}}_\iota}$ of $\GL_n$ of heighest weight ${\boldsymbol{\mu}}_\iota=({\boldsymbol{\mu}}_{\iota,i})_{1\leq i\leq n}\in\ZZ^n=X(T_n)$, which is supposed to be dominant and regular. Likewise we have an irreducible representation of highest weight ${\boldsymbol{\nu}}_\iota$ of $\GL_{n-1}$, which is defined over a number field $\QQ(\boldsymbol{\mu})$, the field of rationality of $\boldsymbol{\mu}$ (the Galois action on $\boldsymbol{\mu}$ is induced by the Galois action on the embeddings $\iota\in S_\infty$). For $\boldsymbol{\mu}=(\boldsymbol{\mu_\iota})_{\iota\in S_\infty}$ we write $M_{\boldsymbol{\mu}}$ for a fixed $\OO_{\QQ(\boldsymbol{\mu})}$-model of the representation space of $\rho_{\boldsymbol{\mu}}=\otimes_{\iota\in S_\infty}\rho_{{\boldsymbol{\mu}}_\iota}$, i.e.\ an $\OO_{\QQ(\boldsymbol{\mu})}$-scheme in modules. More concretely we assume a flat $\OO_{\QQ(\boldsymbol{\mu})})$-module
$$
M_{{\boldsymbol{\mu}}}(\OO_{\QQ(\boldsymbol{\mu})})
$$
to be given such that for any $\OO_{\QQ(\boldsymbol{\mu})}$-algebra $A$ we have the $A$-valued points
$$
M_{{\boldsymbol{\mu}}}(A)=M_{{\boldsymbol{\mu}}}(\OO_{\QQ(\boldsymbol{\mu})})\otimes_{\OO_{\QQ(\boldsymbol{\mu})}} A.
$$
The flatness guarantees that the natural map
$$
M_{{\boldsymbol{\mu}}}(\OO_{\QQ(\boldsymbol{\mu})})\to
M_{{\boldsymbol{\mu}}}(\QQ(\boldsymbol{\mu}))
$$
is a monomorphism. Then we consider $M_{{\boldsymbol{\mu}}}$ to be the corresponding irreducible representation of $G_n=\res_{\OO_k/\ZZ}\GL_n$, the latter canonically identified with $\GL_n^{S_\infty}$ after sufficient extension of scalars. We have a well defined diagonal embedding $\GL_n\to G_n$ over $\ZZ$. In particular we write $w_n$ for the long Weyl element in $\GL_n$, embedded diagonally into $G_n$. It should be clear from the context when we are talking about a diagonally embedded $w_n$ or not.

To clarify the relation between finite dimensional representations and the critical values we need to introduce some more notation and terminology. See \citep[section 1]{januszewski2009} for the details concerning the general setup for reductive groups. Write $G_n:=\res_{\OO_k/\ZZ}\GL_n$ and introduce the group
$$
{}^0G_n:=\bigcap_{\alpha\in X_\QQ(G)}\kernel\alpha^2.
$$
Then ${}^0G_n$ is a reductive group scheme over $\ZZ$ and $X_\QQ({}^0G_n)\otimes_\ZZ\QQ=1$ \citep[Proposition 1.2]{januszewski2009}. Furthermore denote by $S$ the maximal $\QQ$-split torus in the radical of $G_n$, or in the maximal $\QQ$-split central torus of $G_n$, what amounts to the same. Then
$$
G_n(\RR)={}^0G_n(\RR)\rtimes S(\RR)^0,
$$
cf. \citep[Proposition 1.2]{borelserre1973}. In our case $S(\RR)^0$ is isomorphic to $\Gm(\RR)^0$ as a (real) Lie group and furthermore
$$
\rang_\RR\zentrum(G_n)=[k:\QQ]
$$
as $k$ is totally real, cf. \citep[section 5]{januszewski2009}. We have explicitly
$$
{}^0G_n(\RR)=\{g\in G_n(\RR)\mid \prod_{\iota\in S_\infty}\absnorm{\det g_\iota}_\iota=1\},
$$
furthermore we introduce the subgroup
$$
G_n^\pm=\{g\in G_n(\RR)\mid \forall \iota\in S_\infty:\absnorm{\det g_\iota}_\iota=1\}.
$$
The reason for considering the latter lies in the fact that, contrary to the former, it has a natural decomposition into local components, which allows to apply the K\"unneth formalism to study its Lie algebra cohomology, which in turn is related to the de Rham cohomology of the symmetric spaces associated to the former or equivalently to the Lie algebra cohomology of that group.

We suppose that for all $\iota\in S_\infty$
$$
H^\bullet(\liegl_{n,\iota},K_{n,\iota}; \pi_\iota\otimes M_{{\boldsymbol{\mu}}_\iota}(\CC))\neq 0,
$$
and likewise
$$
H^\bullet(\liegl_{n-1,\iota},K_{n-1,\iota}; \sigma_\iota\otimes M_{{\boldsymbol{\nu}}_\iota}(\CC))\neq 0,
$$
where $\liegl_{n,\iota}$ is the komplexified Lie algebra of $\GL_n(k_\iota)$ and $K_{n,\iota}:=\iota(O(n)Z_n^0)$ is the product of the connected component of the center of $\GL_n(k_\iota)$ and its standard maximal compact subgroup. Now regularity of $\pi$ means that the ${\boldsymbol{\mu}}_{\iota,i}$ are pairwise distinct, i.e.\ ${\boldsymbol{\mu}}_{\iota,i}>{\boldsymbol{\mu}}_{\iota,i+1}$ for all $1\leq i <n$. Furthermore we know that the {\em weight}
$$
{\bf w}:={\boldsymbol{\mu}}_{\iota,i}+{\boldsymbol{\mu}}_{\iota,n+1-i}
$$
is {\em independent} of $i$ and $\iota\in S_\infty$, and similarly
$$
{\bf v}:={\boldsymbol{\nu}}_{\iota,i}+{\boldsymbol{\nu}}_{\iota,n-i}.
$$
In other words, $\pi_\infty$ and $\sigma_\infty$ are selfcontragredient up to twist.

We write for any $\mu\in\ZZ$
$$
M_{\boldsymbol{\mu}}[\mu]:=M_{\boldsymbol{\nu}}\otimes M_{(\mu)},
$$
where $M_{(\mu)}$ denotes the one-dimensional $G_n$-module on which $G_n$ acts via the projection $G_n\to G_n/({}^0G_n)^0\cong\Gm$, and $\Gm$ acts via the character $x\mapsto x^\mu$. Assume that our integral models are chosen in such a way that we can fix for all dominant weights $\boldsymbol{\mu}$ and all $\mu\in\ZZ$ isomorphisms
\begin{equation}
M_{\boldsymbol{\nu}}[\nu]\to M_{{\boldsymbol{\nu}}+\nu},
\label{eq:mnutwistiso}
\end{equation}
where $\boldsymbol{\mu}+\mu$ is considered to be the collection of weights $(\boldsymbol{\mu}_{\iota,i}+\mu)_{1\leq i \leq n}$.

We write $\check{{\boldsymbol{\mu}}}$ for the highest weight of the contragredient representation corresponding to ${\boldsymbol{\mu}}$, and assume that we have another twisted isomorphism
$$
\cdot^\vee:M_{\boldsymbol{\mu}}\to M_{\check{\boldsymbol{\mu}}},
$$
which induces an isomorphism if we twist the action on $M_{\boldsymbol{\mu}}$ by the twisted main involution $g\mapsto w_{n}(g^{-1})^t w_{n}$, for the long Weyl element $w_n\in\GL_n$. Furthemore we assume that $\cdot^\vee$ is compatible with the above isomorphisms.

By a straightforward computation (cf.\ \citep[Proposition 2.2]{kastenschmidt2008}) we know that there exists a central critical half-integer $s=\frac{1}{2}+\nu$ for $L(s,\pi\times\sigma)$ if and only if
\begin{equation}
{\bf w}\equiv {\bf v}\pmod{2}
\label{eq:parity}
\end{equation}
and all critical half-integers are centered around
$$
s=\frac{1+{\bf w}+{\bf v}}{2},
$$
which is itself critical. Furthermore under \eqref{eq:parity} this condition of criticality is equivalent to the existence of a $G_{n-1}$-equivariant embedding $M_{\check{\boldsymbol{\nu}}}[\nu]\to M_{\boldsymbol{\mu}}$. Then the map
$$
\nu\mapsto s=\frac{1}{2}+\nu
$$
sets up a bijection between the set ${\rm Emb}(\check{{\boldsymbol{\nu}}},{\boldsymbol{\mu}})$ of integers $\nu$ such that there is an embedding $M_{\check{\boldsymbol{\nu}}}[\nu]\to M_{\boldsymbol{\mu}}$ of $G_{n-1}$-modules and the set of critical half-integers for $L(s,\pi\times\sigma)$.

Denote $(\boldsymbol{w},\boldsymbol{l})$ the Langlands parameter for $\pi_\infty$, i.e.
$$
\boldsymbol{l}=2\cdot(\boldsymbol{\mu}+\rho_n)-(\boldsymbol{w}),
$$
where $\rho_n$ denotes the half sum of the positive roots for our choice of root datum in $G_n$ and $(\boldsymbol{w})$ denotes the diagonally embedded weight $\boldsymbol{w}$. Then $\boldsymbol{l}=(l_{\iota,i})_{\iota\in S_\infty,1\leq i\leq n}$ and we use a similar notation for the Langlands parameter $(\boldsymbol{v},\boldsymbol{m})$ for $\sigma_\infty$. With this notation we set
$$
\nu_{\min}:=\frac{\boldsymbol{w}+\boldsymbol{v}}{2}-\min_{i,j,\iota}|l_{\iota,i}-m_{\iota,j}|+1.
$$
Note that due to our parity condition \eqref{eq:parity}, we know that
$$
\nu_{\min}\leq \frac{\boldsymbol{w}+\boldsymbol{v}}{2}.
$$
With this notation
$$
s_{\min}:=\frac{1}{2}+\nu_{\min}
$$
is the left most critical value for $L(s,\pi\otimes\sigma)$, and the right most is
$$
s_{\max}:=\frac{1}{2}+\boldsymbol{w}+\boldsymbol{v}-\nu_{\min}.
$$

Write $\mathscr X_n$, ${}^0\mathscr X_n$, $\mathscr X_n^{\ad}$ for the symmetric spaces associated to $G_n(\RR)$, ${}^0G_n(\RR)$ and $G_n(\RR)/S(\RR)^0$ respectively. We use the same supscripts for arithmetic subgroups and their various projections and restrictions. Note however that in this sense ${}^0\Gamma=\Gamma$ for any arithmetic $\Gamma\subseteq G_n(\RR)$, so that we drop the supscript \lq$0$\rq{} from the notation. Considering arithmetic quotients we also tend to drop the supscript \lq${\ad}$\rq{}.

Consider the composition
$$
s:=\ad\circ j:G_{n-1}\to G_n\to G_n^{\ad}
$$
of group schemes over $\ZZ$. This is eventually a central morphism which can be interpreted in cohomology with suitable growths conditions by means of \citep[Proposition 1.4]{januszewski2009}. For any $\gamma\in G_n(\QQ)$ and sufficiently compatible arithmetic subgroups $\Gamma_{n-1}\subseteq G_{n-1}(\QQ)$, $\Gamma_{n}\subseteq G_{n}(\QQ)$, i.e.\ $\gamma^{-1} s(\Gamma_{n-1})\gamma\subseteq\Gamma_{n}$, we have a natural map in cohomology
$$
s_{\gamma}^*:H_{\rm c}^q(\Gamma_n\backslash\mathscr X_n^{\ad})\to
H_{\rm c}^{q}(\Gamma_{n-1}\backslash\mathscr X_{n-1}),
$$
which is induced by the pullback $s_\gamma^*$ along the map
$$
s_\gamma:\Gamma_{n-1}\backslash\mathscr X_{n-1}\to\Gamma_n\backslash\mathscr X_n^{\ad},
$$
$$
\Gamma_{n-1}x\mapsto\Gamma_n\gamma^{-1} s(x),
$$
which is proper in fact.

Now when considering $M_{{\boldsymbol{\mu}}}$ as a $\GL_{n-1}$-module that we denote $j^*(M_{{\boldsymbol{\mu}}})$, it obviously decomposes into a finite sum of irreducible modules
$$
j^*(M_{{\boldsymbol{\mu}}})=\bigoplus_{\boldsymbol{\mu}^*}M_{\boldsymbol{\mu}^*},
$$
where $\boldsymbol{\mu}^*$ is a highest weight for $G_{n-1}$. By the classical work of Weyl \citep{book_weyl1939} the sum ranges over all such $\boldsymbol{\mu}^*$ satisfying for all $\iota\in S_\infty$ and all $1\leq i < n$
$$
\boldsymbol{\mu}_{\iota,i}\geq
\boldsymbol{\mu}^*_{\iota,i}\geq
\boldsymbol{\mu}_{\iota,i+1}.
$$
We claim that
$$
\Hom_{\Gamma_{n-1}}(M_{\check{\boldsymbol{\nu}}},j^*(M_{\boldsymbol{\mu}}))
$$
has a natural basis enumerated by ${\rm Emb}(\check{\boldsymbol{\nu}},\boldsymbol{\mu})$.

Indeed, Borel has shown that $\Gamma_{n-1}^{\der}:=\Gamma_{n-1}\cap G_{n-1}^{\der}(\RR)$ is Zariski dense in $G_{n-1}^{\der}$. As we know that $\Gamma_{n-1}\backslash {}^0\mathscr X_{n-1}$ is of finite invariant volume and that there is a split maximal torus in ${}^0G_{n-1}$ over $\RR$, the argument of the proof of \citep[Lemma 1.6]{januszewski2009} shows that $\Gamma_{n-1}$ is eventually Zariski dense in $({}^0G_{n-1})^0$ (this statement is false if $k$ has a complex place, as the classical Theorem of Dirichlet on the structure of the integral units reveals). Furthermore it is easily seen that $G_{n-1}=({}^0G_{n-1})^0 S$. We deduce that there exists a $\Gamma_{n-1}$-linear isomorphism
$$
M_{\check{\boldsymbol{\nu}}}\to M_{\boldsymbol{\mu}^*}
$$
if and only if for some $\nu\in\ZZ$ we have the identity of weights
$$
\check{\boldsymbol{\nu}}+\nu=\boldsymbol{\mu}^*.
$$
For any $\nu$ we have a natural isomorphism of $G_{n-1}$-modules
$$
M_{{\boldsymbol{\nu}}}\otimes M_{\check{\boldsymbol{\nu}}+\nu}\to M_{(\nu)},
$$
where the latter denotes the scheme in free modules of rank $1$, on which $G_{n-1}$ acts via its projection to the torus $G_{n-1}/({}^0G_{n-1})^0$, and the latter acts by the character $s\mapsto s^\nu$. In particular we have natural isomorphisms
$$
H^0\left({\Gamma_{n-1}};
j^*(M_{\boldsymbol{\mu}})\otimes
M_{\boldsymbol{\nu}}
\right)
\cong
H^0\left(({}^0G_{n-1})^0;
j^*(M_{\boldsymbol{\mu}})\otimes
M_{\boldsymbol{\nu}}
\right)
$$
and
\begin{equation}
\tau:H^0\left({\Gamma_{n-1}};
j^*(M_{\boldsymbol{\mu}})\otimes
M_{\boldsymbol{\nu}}
\right)
\to
\bigoplus_{\nu\in{\rm Emb}(\check{\boldsymbol{\nu}},{\boldsymbol{\mu}})}
 M_{(\nu)}
\label{eq:compiso}
\end{equation}
of $G_{n-1}$-modules and likewise a natural equivariant projection
$$
\tau_{\nu}:
H^0\left({\Gamma_{n-1}};
j^*(M_{\boldsymbol{\mu}})\otimes
M_{\boldsymbol{\nu}}
\right)
\to
 M_{(\nu)}.
$$

Write $\underline{M}_{\boldsymbol{\nu}}(A)$ for the sheaf associated to the module $M_{\boldsymbol{\nu}}(A)$ on $\Gamma_{n-1}\backslash\mathscr X_{n-1}$. As $\Gamma_{n-1}\backslash\mathscr X_{n-1}$ is orientable, we can fixing an orientation, which means that we fix an isomorphism $H_{\rm c}^{d_{n-1}}(\Gamma_{n-1}\backslash\mathscr X_{n-1},\underline{A})\cong A$. Assume that $A$ is a noetherian and integrally closed domain. Then any free $A$-module of finite rank is reflexive. Assuming $M_{\boldsymbol{\mu}}(A)$ and $M_{\boldsymbol{\nu}}(A)$ to be free, which is automatic whenever $A$ is a principal ideal domain, we have a natural perfect pairing
$$
\lambda:\left(M_{\boldsymbol{\mu}}(A)\otimes_A M_{\boldsymbol{\nu}}(A)\right)\otimes_A
\left(M_{\boldsymbol{\mu}}(A)\otimes_A M_{\boldsymbol{\nu}}(A)\right)^\vee\to A.
$$
This pairing identifies the right argument as the $A$-dual of the left argument, including the canonical $G_{n-1}(A)$-actions.

Plugging things together we get for any $p\in\ZZ$ a natural pairing
$$
H_{\rm c}^{p}(\Gamma_{n-1}\backslash\mathscr X_{n-1};
\underline{M}_{\boldsymbol{\mu}}(A)\otimes_A
\underline{M}_{\boldsymbol{\nu}}(A))\otimes_A
H^{d_{n-1}-p}(\Gamma_{n-1}\backslash\mathscr X_{n-1}^{\ad};
(\underline{M}_{\boldsymbol{\mu}}(A)\otimes_A
\underline{M}_{\boldsymbol{\nu}}(A))^\vee)\
$$
$$
\to H_{\rm c}^{d_{n-1}}(\Gamma_{n-1}\backslash\mathscr X_{n-1};\underline{A})\to A,
$$
by composing $H_{\rm c}^{d_{n-1}}(\lambda)$ with the $\cup$-product. By Poincar\'e, this is a perfect pairing. In particular we might identify
$$
H_{\rm c}^{d_{n-1}}(\Gamma_{n-1}\backslash\mathscr X_{n-1};
s_\gamma^*(
\underline{M}_{\boldsymbol{\mu}}(A))\otimes_A
\underline{M}_{\boldsymbol{\nu}}(A))
$$
with the $A$-dual of
$$
H^{0}(\Gamma_{n-1}\backslash\mathscr X_{n-1}^{\ad};
(s_\gamma^*(
\underline{M}_{\boldsymbol{\mu}}(A))\otimes_A
\underline{M}_{\boldsymbol{\nu}}(A))^\vee),
$$
which turns out to be\footnote{reflexivity...}
$$
H^{0}(\Gamma_{n-1}\backslash\mathscr X_{n-1}^{\ad};
s_\gamma^*(
\underline{M}_{\boldsymbol{\mu}}(A))\otimes_A
\underline{M}_{\boldsymbol{\nu}}(A)),
$$
taking the $G_{n-1}(A)$-action into account. Hence we get a canonical natural isomorphism
\begin{equation}
\int\limits_{\Gamma_{n-1}\backslash\mathscr X_{n-1}}\!\!\!\!\!\!\!\!\!\!:\quad
H_{\rm c}^{d_{n-1}}(\Gamma_{n-1}\backslash\mathscr X_{n-1};
s_\gamma^*(
\underline{M}_{\boldsymbol{\mu}}(A))\otimes_A
\underline{M}_{\boldsymbol{\nu}}(A))
\to\quad\quad\quad
\label{eq:integration}
\end{equation}
$$
H^{0}(\Gamma_{n-1}\backslash\mathscr X_{n-1};
s_\gamma^*(
\underline{M}_{\boldsymbol{\mu}}(A))\otimes_A
\underline{M}_{\boldsymbol{\nu}}(A)).
$$
Consequently we get by Poincar\'e duality a natural map
$$
{\mathscr B}_\gamma^{\Gamma_n,\Gamma_{n-1}}:\;\;\;
H_{\rm c}^q(\Gamma_{n}\backslash\mathscr X_n^{\ad};
\underline{M}_{\boldsymbol{\mu}}(A))
\otimes_A
H_{\rm c}^{d_{n-1}-q}(\Gamma_{n-1}\backslash\mathscr X_{n-1}^{\ad};\underline{M}_{\boldsymbol{\nu}}(A))
\to
$$
$$
H^0(\Gamma_{n-1}\backslash\mathscr X_{n-1};
s_{\gamma}^*(\underline{M}_{\boldsymbol{\mu}})\otimes
\underline{M}_{\boldsymbol{\nu}}
(A)),
$$
which for $A=\CC$ is given by
$$
\alpha\otimes\beta\;\mapsto\;
\int_{\Gamma_{n-1}\backslash\mathscr X_{n-1}}
s_{\gamma}^*(\alpha)\wedge\ad^*(\beta),
$$
where $d_{n-1}=\dim\mathscr X_{n-1}$.

Note that the map
$$
{}^0\ad:\Gamma_{n-1}\backslash{}^0\mathscr X_{n-1}\to\Gamma_{n-1}\backslash\mathscr X_{n-1}^{\ad}
$$
induced by $\ad$ is proper \citep[Lemma 1.5]{januszewski2009}, and the domain is of finite invariant volume. Furthermore integration along the fibre of the natural decomposition
$$
G_{n-1}(\RR)={}^0G_{n-1}(\RR)\rtimes S(\RR)^0,
$$
induces a natural map
$$
i_*\circ s_\gamma^*:H_{\rm fd}^q(\Gamma_{n}\backslash\mathscr X_{n}^{\ad})\to
H_{\rm mg}^{q-1}(\Gamma_{n-1}\backslash\mathscr X_{n-1}).
$$
Consequently we may identify the above topological symbol with the relative modular symbols constructed in \citep{kazhdanmazurschmidt2000,januszewski2009}. Our definition of the modular symbol is essentially a generalization of that of \citep{schmidt2001}.

We might compose with $\tau_\nu$ to identify the image of $\mathscr B_\gamma^{\Gamma_n,\Gamma_{n-1}}$ as the space $\CC^{{\rm Emb}(\check{\boldsymbol{\nu}},{\boldsymbol{\mu}})}$, because observe that
$$
H^0(\Gamma_{n-1}\backslash\mathscr X_{n-1};
(j^*(\underline{M}_{\boldsymbol{\mu}})\otimes
\underline{M}_{\boldsymbol{\nu}})(\CC))=
H^0(\Gamma_{n-1};
(j^*(M_{\boldsymbol{\mu}})\otimes M_{\boldsymbol{\nu}})(\CC)),
$$
by means of the definition of the sheaves, and conjugation by $\gamma$ induces an automorphism of $M_{\boldsymbol{\mu}}(\CC)$, such that we have an isomorphism
$$
s_\gamma^*(\underline{M}_{\boldsymbol{\mu}}(\CC))\cong
s^*(\underline{M}_{\boldsymbol{\mu}}(\CC)),
$$
and the latter may be canonically identified with a subsheaf of $j^*(\underline{M}_{\boldsymbol{\mu}}(\CC))$. Note however that this is no longer true when we take integral structures into account. Nonetheless the components have the following interpretation.

Pick some compactly supported (or some fast decreasing) differential forms
$$
\alpha^0
=\omega\otimes\phi\in
\left(\bigwedge^q(\lieg_n/(\liek_n+\lies))^*\otimes
\mathscr C^\infty(\Gamma_n\backslash G_n(\RR)/S(\RR)^0; M_{\boldsymbol{\mu}}(\CC))
\right)^{K_n^0}.
$$
We already sketched the construction of how to interpret this as a classical differential form. In particular we set for any $g\in G_n^{\der}(\RR)$
$$
\alpha:=\omega\otimes (g\mapsto \rho_{\boldsymbol{\mu}}(g)(\phi(g))),
$$
then via the identification $\mathscr X_n^{\ad}=\mathscr X_n^{\der}$
$$
\alpha\in\Omega^q(\mathscr X_n^{\ad}; M_{\boldsymbol{\mu}}(\CC))^{\Gamma_{n}}.
$$
We might pushforward this form along the canonical projection to the arithmetic quotient and get a form
$$
\alpha\in\Omega^q(\Gamma_n\backslash\mathscr X_n^{\ad}; \underline{M}_{\boldsymbol{\mu}}(\CC)).
$$
We apply the very same procedure to an element
$$
\beta^0=\omega'\otimes\varphi\in
\left(\bigwedge^{d_n-q}(\lieg_{n-1}/(\liek_{n-1}+\lies))^*\otimes
\mathscr C^\infty(\Gamma_{n-1}\backslash G_{n-1}^{\ad}(\RR); M_{\boldsymbol{\nu}}(\CC))
\right)^{K_{n-1}^0}.
$$
Likewise we get a form
$$
\beta\in\Omega^{d_n-q}(\Gamma_{n-1}\backslash\mathscr X_{n-1}^{\ad}; \underline{M}_{\boldsymbol{\nu}}(\CC)).
$$

The pullback $\overline{\varphi}:\Gamma_{n-1}\backslash\mathscr X_{n-1}\to M_{\boldsymbol{\nu}}(\CC)$ of $\varphi$ along $\Gamma_{n-1}\backslash\mathscr X_{n-1}\to \Gamma_{n-1}\backslash\mathscr X_{n-1}^{\ad}$ is given explicitly by
$$
gz\mapsto \rho_{\boldsymbol{\nu}}(z)^{-1}\varphi(g),
$$
where $z\in \zentrum(G_{n-1})(\RR)^0$. We apply the same notation to the pullback $\overline{\phi}:\Gamma_n\backslash\mathscr X_n\to M_{\boldsymbol{\mu}}(\CC)$ along the analogous map. Then by the isomorphism \eqref{eq:compiso} we see that the above pairing reads
$$
\alpha\otimes\beta\mapsto
\left(\tau\circ\int_{\Gamma_{n-1}\backslash\mathscr X_{n-1}}
\tau_\nu(\rho_{\check{\boldsymbol{\nu}}+\nu}(g)
\otimes
\rho_{\boldsymbol{\nu}}(g))
\left(\overline{\phi}(s_\gamma(g))\otimes\overline{\varphi}(g)\right)
dg
\right)_{\nu\in{\rm Emb}(\check{\boldsymbol{\nu}},{\boldsymbol{\mu}})}
$$
where the right hand side is an element of $\CC^{{\rm Emb}(\boldsymbol{\nu},\check{\boldsymbol{\mu}})}$. The components of the right hand side read
$$
\int_{\Gamma_{n-1}\backslash{}^0\mathscr X_{n-1}}
\int_{S(\RR)^0}
\tau_\nu(\rho_{\check{\boldsymbol{\nu}}+\nu}(gs)\otimes
\rho_{\boldsymbol{\nu}}(gs))(\overline{\phi}(s_\gamma(gs))\otimes\overline{\varphi}(gs))
dsdg
=
$$
$$
\int_{\Gamma_{n-1}\backslash\mathscr X_{n-1}}
\overline{\phi}(s_\gamma(g))\overline{\varphi}(g)s(g)^\nu
dg,
$$
where $s(g)$ denotes the projection of $g$ to the split torus $G_{n-1}/({}^0G_{n-1})^0\cong\Gm$ and the components of the first two formulas are identified with their canonical projections to the $({}^0G_{n-1})^0$-invariants.

\subsection{Construction of cohomology classes}\label{sex:classes}

If we make in particular the following choice for $\phi$ and $\varphi$, we eventually recover the period integrals of the previous section. Assume that we have non-vanishing cohomologies
$$
H^\bullet(\tilde{\lieg}_n,K_n; \pi_\infty\otimes M_{\boldsymbol{\mu}}(\CC))\neq 0,
$$
$$
H^\bullet(\tilde{\lieg}_{n-1},K_{n-1}; \sigma_\infty\otimes M_{\boldsymbol{\nu}}(\CC))\neq 0,
$$
where $\tilde{\lieg}_n$ denotes the complexified Lie algebra of $G_n^{\ad}(\RR)$ or equivalently of $(G_n^\pm)^0$, and $K_n$ is the product of a maximal compact subgroup and the center of $G_n(\RR)$. We tacitly assumed the infinity components to be smooth. Note that this condition of cohomologicality is only slightly stricter than allowing quadratic twists. This assumption simplifies our formulation.

Now for the unique even integer $m\in\{n,n-1\}$ we are eventually interested in the eigen spaces
$$
H^{rb_m}(\tilde{\lieg}_m,K_m^0; \tau_\infty\otimes M_{\boldsymbol{\eta}}(\CC))_\varepsilon\cong\CC,
$$
where $\boldsymbol{\eta}$ is $\boldsymbol{\mu}$ if $m=n$ or $\boldsymbol{\nu}$ otherwise, likewise $\tau_\infty$ is $\pi_\infty$ or $\sigma_\infty$, and $\varepsilon$ is a character of $\pi_0(G_m(\RR))$. These eigen spaces are known to be one-dimensional, where the bottom degree is given by $b_m=\frac{n^2-n+2\left\lfloor\frac{n}{2}\right\rfloor}{2}$, $r=[k:\QQ]$.

Write $m'\in\{n,n-1\}\setminus\{m\}$ for the unique odd rank of interest, $\boldsymbol{\eta}'$ for the corresponding weight and $\tau_\infty'$ for the representation at infinity respectively. In this case
$$
H^{rb_{m'}}(\tilde{\lieg}_{m'},K_m'; \tau_\infty'\otimes M_{\boldsymbol{\eta}'}(\CC))\cong\CC
$$
is one-dimensional and eigen spaces for non-trivial characters vanish here.

So we can pick for each $\varepsilon$ generators of these eigen spaces in
$$
\eta_{\infty,\varepsilon}\in\left(\bigwedge^{rb_m}\tilde{\liep}_m^*\otimes\mathscr{W}_0(\tau_\infty,\psi_\infty)\otimes M_{\boldsymbol{\eta}}(\CC)\right)^{K_m^0},
$$
as the latter space is known to give an explicit description of the $(\tilde{\lieg}_m,K_m^0)$-cohomology of $\tau_\infty$. Similarly we have a generator
$$
\eta_{\infty}'\in\left(\bigwedge^{rb_{m'}}\tilde{\liep}_{m'}^*\otimes\mathscr{W}_0(\tau_\infty',\psi_\infty')\otimes M_{\boldsymbol{\eta}'}(\CC)\right)^{K_{m'}}.
$$
The relative cohomologies with respect to $K_{m'}$ and $K_{m'}^0$ are here the same due to our assumption on cohomologicality. We apologize for the imprecise notation concerning the additive characters $\psi_\infty$ and $\psi_\infty'$. We assume them to be compatible with our previous choices, i.e.\ they are dual to each other in an appropriate sense.

Now we can add local Whittaker functions of finite places to build up global Whittaker functions. To be more precise, choose Maurer-Cartan bases of $\tilde{\liep}_{m}$ and $\tilde{\liep}_{m'}$ and write with respect to those
$$
\eta_{\infty,\varepsilon}=\sum_{\#I=rb_m}\omega_I\otimes w_{\infty,\varepsilon,I},
$$
where $w_{\infty,\varepsilon,I}\in\mathscr{W}_0(\tau_\infty,\psi_\infty)\otimes M_{\boldsymbol{\eta}'}(\CC)$. Then we tensor the latter Whittaker functions with our finite component $w_f\in\mathscr{W}(\tau_f,\psi_f)$ and we get a well defined element
$$
\sum_{\#I=rb_m}\omega_I\otimes w_{\varepsilon,I}=
\sum_{\#I=rb_m}\omega_I\otimes w_{\infty,\varepsilon,I}\otimes w_{f},
$$
which by Fourier transform yields a cuspidal class
$$
[\sum_{\#I=rb_m}\omega_I\otimes \phi_{\varepsilon,I}]\in H^{rb_m}(\tilde{\lieg}_m,K_m^0;
L_0^2(G_m(\QQ)\backslash G_m(\Adeles_\QQ)/K_{m,f}; M_{\boldsymbol{\eta}}(\CC)))_{\varepsilon},
$$
for some compact open $K_{m,f}\subseteq G_m(\Adeles_\QQ^{(\infty)})$, which in turn yields, thanks to Borel \citep{borel1974,borel1981}, a cohomology class with compact support
$$
[\eta_\varepsilon]\in H^{rb_m}_{\rm c}(G_m(\QQ)\backslash G_m(\Adeles_\QQ)/K_{m,f}K_m^0; \underline{M}_{\boldsymbol{\eta}}(\CC))_{\varepsilon}
$$
(note that this transition implicitly requires the formalism we discussed before, and this is eventually compatible with the component action, i.e.\ the resulting class lies again in the $\varepsilon$-eigen space as desired).

The very same procedure works for $\eta_\infty'$ and yields a class\footnote{in this case the component action is trivial.}
$$
[\eta']\in H^{rb_{m'}}_{\rm c}(G_{m'}(\QQ)\backslash G_{m'}(\Adeles_\QQ)/K_{m',f}K_{m'}^0; \underline{M}_{\boldsymbol{\eta}'}(\CC)).
$$

Now, depending on whether $m=n$ or not we choose $\alpha_\varepsilon=[\eta_\varepsilon]$ or $\alpha_\varepsilon=[\eta']$, and $\beta_\varepsilon=[\eta']$ or $\beta_\varepsilon=[\eta_\varepsilon]$ respectively. Then we might plug in the universal class
$$
\lambda_{\pi,\sigma}:=\sum_{\varepsilon}\alpha_\varepsilon\otimes\beta_\varepsilon
$$
into our modular symbol and we eventually get the desired periods (for some suitable choice of $\gamma$), modulo constants depending on the parity of the critical value, which might vanish. By classical results of Shimura and Manin in the case $n=2$ and Mazur, Schmidt and Kasten-Schmidt in the case $n=3$, we know that, depending on the signs of the components of $\chi_\infty$ and the parity of $\nu$, there is precisely one $\varepsilon$ such that $\alpha_\varepsilon\otimes\beta_\varepsilon$ contributes to the evaluation of the modular symbol at $\lambda_{\pi,\sigma}$ and the corresponding period $\Omega(\sum_{I\cap I'=\emptyset}w_{\infty,\varepsilon,I}\otimes v_{\infty,\varepsilon,I'},\chi_\infty)$ does not vanish at $\frac{1}{2}+\nu$.

Now rationality and even integrality follows if we choose our local Whittaker vectors in such a way that the resulting class becomes integral, i.e.\ a non-zero element\footnote{eventually the integral structure in cohomology with compact supports induces integral structures on our eigen spaces in Lie algebra cohomology, which in turn as a unique generator up to integral units. We assume that we choose such a generator for any $\varepsilon$.} of
$$
H^{rb_{m}}_{\rm c}(G_m(\QQ)\backslash G_m(\Adeles_\QQ)/K_{m,f}K_m^0; \underline{M}_{\boldsymbol{\eta}}(\OO_E))_{\varepsilon},
\;\;\;\text{resp.}\;\;\;
$$
$$
H^{rb_{m'}}_{\rm c}(G_{m'}\QQ)\backslash G_{m'}(\Adeles_\QQ)/K_{m',f}K_{m'}^0; \underline{M}_{\boldsymbol{\eta}'}(\OO_E)),
$$
where $\OO_E$ is the ring of integers of a field of rationality $E$ for $(\pi,\sigma)$. It is well known that this choice is possible by virtue of a generalized Eichler-Shimura isomorphism. Eichler-Shimura guarantees that a suitable choice at the finite places yields an integral class modulo scalars. So the essential point in our construction is multiplicity one in bottom degree, i.e.\ the corresponding eigen spaces are one-dimensional, and an appropriate scaling gives rise to the periods (under the widely believed conjecture that there is a good vector at infinity supporting cohomology; in some sense the work of Kasten-Schmidt can be interpreted as a partial result in this direction in the case $n=3$; furthermore results of Harder and Mahnkopf also point in this direction, modulo scalars in $E$).

After all, we have an, up to integral units and possible torsion, well-defined integral element
$$
\lambda_{\pi,\sigma}\;\in\;
H^{rb_{n}}_{\rm c}(G_n(\QQ)\backslash G_n(\Adeles_\QQ)/K_{n,f}K_n^0; \underline{M}_{\boldsymbol{\mu}}(\OO_E))
$$
$$
\otimes_{\OO_E}H^{rb_{n-1}}_{\rm c}(G_{n-1}(\QQ)\backslash G_{n-1}(\Adeles_\QQ)/K_{n-1,f}K_{n-1}^0; \underline{M}_{\boldsymbol{\nu}}(\OO_E)).
$$
Extending our modular symbol to the various connected components we get for each component a vector
$$
{\mathscr B}_\gamma^{\Gamma_n,\Gamma_{n-1}}(\lambda_{\pi,\sigma})\;\in\;
H^0(
({}^0G_{n-1})^0;
j_{\gamma}^*M_{\boldsymbol{\mu}}(\OO_E)\otimes_{\OO_E}
M_{\boldsymbol{\nu}}(\OO_E)),
$$
where
$$
j_{\gamma}:G_{n-1}\to G_{n},\;\;\;g\mapsto \gamma^{-1}j(g),
$$
encoding the period integrals from the global Birch Lemma.

\subsection{Integral structures on local systems}

Consider the $G_{n}(\QQ)$-module $M_{\boldsymbol{\mu}}(E)$ for any field extension $E/\QQ(\boldsymbol{\mu})$. It gives rise to a sheaf $\underline{M}_{\boldsymbol{\mu}}(E)$ on the locally symmetric space
$$
\mathscr X_n(K):=G_{n}(\QQ)\backslash G_{n}(\Adeles_\QQ)/K_\infty^0K,
$$
where $K$ is any compact open subgroup of $G_{n}(\Adeles_\QQ^{(\infty)})$ and $K_\infty\subseteq G_{n-1}(\RR)$ is the product of the standard maximal compact subgroup and the center of $G_{n-1}(\RR)$. We henceforth assume that for all $g\in G_n(\Adeles_\QQ^{(\infty)})$
$$
\Gamma_g:=G_{n}(\QQ)\cap gKg^{-1}
$$
is torsion free. Then $\mathscr X_n(K)$ is a manifold and with $\Gamma:=\Gamma_1$ we may identify
$$
\Gamma\backslash\mathscr X_n\subseteq\mathscr X_n(K)
$$
naturally as the connected component of the the origin. Furthermore this corresponds by strong approximation to the fiber above the identity of the surjective determinant map
$$
{\det}_K:\mathscr X_n(K)\to k^\times\backslash\Adeles_k^\times/(k\otimes_\QQ\RR)^0\det(K).
$$
Note that the base
$$
C(K):=k^\times\backslash\Adeles_k^\times/(k\otimes_\QQ\RR)^0\det(K).
$$
is finite. In particular we write
$$
\mathscr X_n(K)[c]:={\det}_K^{-1}(c)
$$
for the fiber of a class
$$
c\in C(K).
$$
We often assume that $c\in \Adeles_k^\times$ is also a representative, that we may and do assume to be a finite id\`ele, i.e.\ trivial at infinity.

In order to examine integral structures we realize the sheaf $\underline{M}_{\boldsymbol{\mu}}(E)$ explicitly as
$$
\Gamma(U;\underline{M}_{\boldsymbol{\mu}}(E))=
\{f:G_n(\QQ)U\to M_{\boldsymbol{\mu}}(E)\mid f\;\text{locally constant and}
$$
$$
\forall\gamma_{n}\in ({}^0G_n)^0(\QQ),u\in G_n(\QQ)U:f(\gamma_{n}) u)=\rho_{\boldsymbol{\mu}}(\gamma_{n})f(u)
\}.
$$
Now let $\OO\subseteq E$ be a subring admitting $E$ as quotient field and let $M_{\boldsymbol{\mu}}(\OO)$ be a $K$-stable $\OO$-lattice in $M_{\boldsymbol{\mu}}(E)$, i.e.\
$$
M_{\boldsymbol{\mu}}(\OO)\otimes_\OO E\cong M_{\boldsymbol{\mu}}(E)
$$
naturally and
$$
\rho_{\boldsymbol{\mu}}(K)(M_{\boldsymbol{\mu}}(\OO)\otimes_\ZZ\hat{\ZZ})\subseteq M_{\boldsymbol{\mu}}(\OO)\otimes_\ZZ\hat{\ZZ}.
$$
We have a sheaf $\underline{M}_{\boldsymbol{\mu}}^0(\OO)$ on $\mathscr X_n(K)[1]$, explicitly given by
$$
\Gamma(U;\underline{M}_{\boldsymbol{\mu}}^0(\OO))=
\{f:\Gamma U\to M_{\boldsymbol{\mu}}(\OO)\mid f\;\text{locally constant and}
$$
$$
\forall\gamma\in \Gamma,u\in \Gamma U:f(\gamma) u)=\rho_{\boldsymbol{\mu}}(\gamma)f(u)
\}.
$$
Any $g\in G_n(\Adeles_\QQ^{(\infty)})$ induces a diffeomorphism
$$
t_g^0:\mathscr X_n(gKg^{-1})[c]\to \mathscr X_n(K)[c\det(g)],
$$
by translation
$$
G_n(\QQ)x gKg^{-1}\mapsto G_n(\QQ)xg K,
$$
which only depends on the class $gK$. Now let
$$
gM_{\boldsymbol{\mu}}(\OO):=
M_{\boldsymbol{\mu}}(E)\cap \rho_{\boldsymbol{\mu}}(g)(M_{\boldsymbol{\mu}}(\OO)\otimes_\ZZ\hat{\ZZ}),
$$
where the intersection takes place in
$$
M_{\boldsymbol{\mu}}(E)\otimes_\QQ\Adeles_\QQ^{(\infty)}.
$$
Then this sheaf is stable under $gKg^{-1}$ and for the arithmetic group
$$
\Gamma_g:=G_n(\QQ)\cap gKg^{-1}
$$
we have the associated sheaf
$$
\underline{gM}_{\boldsymbol{\mu}}^0(\OO).
$$
on $\mathscr X_n(gKg^{-1})[1]$. Then we get the sheaf
$$
\underline{M}_{\boldsymbol{\mu}}^0(\OO)[\det(g)]:=t_{g,*}^0\underline{gM}_{\boldsymbol{\mu}}^0(\OO)
$$
on $\mathscr X_n(K)[\det(g)]$. By strong approximation it does only depend on the class of $\det(g)$ in $C(K)$.

Plugging the above sheaves together we get a subsheaf $\underline{M}_{\boldsymbol{\mu}}(\OO)$ of $\underline{M}_{\boldsymbol{\mu}}(E)$, given by
$$
\Gamma(U;\underline{M}_{\boldsymbol{\mu}}(\OO))=
\{f\in\Gamma(U;\underline{M}_{\boldsymbol{\mu}}(E)\mid
\forall c\in C(K):
$$
$$
f|_{U\cap\mathscr X_n(K)[c]}\in
\Gamma(U\cap\mathscr X_n(K)[c];
\underline{M}_{\boldsymbol{\mu}}^0(\OO)[c])
\}.
$$
This sheaf may be naturally identified with the topological sum of the sheaves $t_{g_c,*}^0\underline{g_cM}_{\boldsymbol{\mu}}^0(\OO)$ for a system of representatives $g_c$ with $\det(g_c)=c$ running through $C(K)$.

In the very same spirit we may for any $g\in G_n(\Adeles_k^{(\infty)})$ identify the pullback of $\underline{M}_{\boldsymbol{\mu}}(\OO)$ along the translation by $g$ map
$$
t_g:\mathscr X_n(gKg^{-1})\to\mathscr X_n(K)
$$
with an analoguously defined sheaf $\underline{gM}_{\boldsymbol{\mu}}(\OO)$, which itself is naturally a subsheaf of $\underline{M}_{\boldsymbol{\mu}}(E)$ on $\mathscr X_n(gKg^{-1})$. To keep track of the various realizations, we write
$$
T_g:t_g^*\underline{M}_{\boldsymbol{\mu}}(\OO)\to
\underline{gM}_{\boldsymbol{\mu}}(\OO)
$$
for the natural isomorphism. Sometimes we consider $t_{g,*}T_g$ also as an isomorphism
$$
t_{g,*}T_g:\underline{M}_{\boldsymbol{\mu}}(\OO)\to
t_{g,*}\underline{gM}_{\boldsymbol{\mu}}(\OO)
$$
of sheaves on $\mathscr X_n(K)$. By construction we have a canonical inclusion
$$
i_g:\underline{gM}_{\boldsymbol{\mu}}(\OO)\to\underline{M}_{\boldsymbol{\mu}}(E).
$$
If $g$ turns out to stabilize $M_{\boldsymbol{\mu}}(\OO)$ we consider $i_g$ also as an embedding
$$
i_g:\underline{gM}_{\boldsymbol{\mu}}(\OO)\to\underline{M}_{\boldsymbol{\mu}}(\OO),
$$
induced by the set-theoretic interpretation of both sides as subsheaves of $\underline{M}_{\boldsymbol{\mu}}(E)$. For notational efficiency we introduce the abbreviations
$$
iT_g:=i_g\circ T_g,
$$
$$
Tt_g^*:=T_g\circ t_g^*,
$$
and
$$
iTt_g^*:=i_g\circ T_g\circ t_g^*.
$$
We note that
\begin{equation}
(T_x\circ t_x^*)\circ (T_y\circ t_y^*)=T_{xy}\circ t_{xy}^*
\label{eq:Ttfunctor}
\end{equation}
and similarly for $i_g$. This formalism carries over to the various kinds of cohomology and integration commutes with $T_g$ and $t_g^*$ and $i_g$ in the appropriate way.

\subsection{Hecke operators on cohomology}

The action of the Hecke algebra of finite level on cohomology may be most conceptually defined via Hecke correspondences. Another approach comes from the cohomology of groups, by interpreting the direct image with compact supports that occurs in the construction via correspondences as a trace map for groups. We need an explicit description of the action, that we put also in the foreground of our definitions. This also leads directly to the instances of the Eichler-Shimura isomorphism that we need.

Let $K\subseteq G_n(\Adeles_\QQ^{(\infty)})$ be compact open and pick an element $g\in G_n(\Adeles_\QQ^{(\infty)})$. Then the double coset represented by $g$ decomposes into finitely many right cosets
$$
KgK=\bigsqcup_i g_iK.
$$
On the sheaf $\underline{M}_{\boldsymbol\mu}(E)$ on $\mathscr X_n(K)$ we define an action sending a section
$$
s\in\Gamma(U;\underline{M}_{\boldsymbol{\mu}}(E))
$$
to
$$
s|_{[KgK]}:=\sum_{i}iTt_{g_i}^*(s)
$$
again on the space $\mathscr X_n(K)$. This action extends to cohomology, i.e.\ considering the right derived functors of the global sections functor we get an endomorphism
$$
\cdot|_{[KgK]}\in\End_E(H^q_*(\mathscr X_n(K);\underline{M}_{\boldsymbol\mu}(E)))
$$
for any $*\in\{-,\rm c,!\}$.

Now assume that $K$ is of Iwahori level $I_n^{(m)}$ at $\mathfrak{p}$. Then $\mathcal H_{I}\otimes_\QQ E$ embeds into $\mathcal H_{E}(K,G_n(\Adeles_\QQ^{(\infty)}))$. Hence as $\mathcal H_{E}(K,G_n(\Adeles_\QQ^{(\infty)}))$ acts on the cohomology from the left, $\mathcal H_{I}$ acts on cohomology from the left and this action is compatible with the action on automorphic forms, i.e.\ we have a generalized Eichler-Shimura map.

\subsection{Cohomological definition of the distribution}

Let for any $\mathfrak{p}$-power ideal $\mathfrak{f}$ in $k$
$$
C(\mathfrak{f}):=
k^\times\backslash\Adeles_k^\times/(1+\mathfrak{f})\prod_{\mathfrak{q}\nmid \mathfrak{p}}U_\mathfrak{q},
$$
$$
C(\mathfrak{p}^\infty):=\varprojlim C(\mathfrak{f})=
k^\times\backslash\Adeles_k^\times/\prod_{\mathfrak{q}\nmid\mathfrak{p}}U_\mathfrak{q},
$$
where $\mathfrak{f}$ ranges over the $\mathfrak{p}$-power ideals. On these groups we have a natural action of the id\`eles $\Adeles_k^\times$.

Fix some compact open subgroups $K$ and $K'$ in $\GL_n(\Adeles_k^{(\infty)})$ resp.\ $\GL_{n-1}(\Adeles_k^{(\infty)})$, which are factorizable and coincide with $I_n^{(m)}$ resp.\ $I_{n-1}^{(m)}$ at $\mathfrak{p}$ for a fixed $m\geq 1$, and satisfy $j(K')\subseteq K$ and $\det(K)=\det(K')$. Denote $\Gamma$ resp.\ $\Gamma'$ the corresponding arithmetic groups, and assume that for all $g\in G_n(\Adeles_\QQ^{(\infty)})$ the arithmetic groups
$$
G_n(\QQ)\cap gKg^{-1}
$$
are torsion free. We assume the same statement for $K'$. We fix $f=\varpi^{\nu_\mathfrak{p}(\mathfrak{f})}$, a generator of $\mathfrak{f}$. Set
$$
K(\mathfrak{f}):=j^{-1}(h^{(f)}K(h^{(f)})^{-1})\cap K'.
$$
Then for $\nu_\mathfrak{p}(\mathfrak{f})\geq m$ it is known that the $\mathfrak{p}$-component of $\det(K(\mathfrak{f}))$ equals $1+\mathfrak{f}$, cf.\ \citep[Proposition 3.4]{schmidt2001}. Therefore we have a finite covering map
\begin{equation}
C(K(\mathfrak{f}))\to C(\mathfrak{f})
\label{eq:Cfcover}
\end{equation}
with the notation of the previous section.

Let $K_\infty$ be the product of the standard maximal compact subgroup in $G_{n-1}(\RR)$ and the center of the latter. Consider the locally symmetric space
$$
\mathscr X(\mathfrak{f}):=\GL_{n-1}(k)\backslash\GL_{n-1}(\Adeles_k)/K(\mathfrak{f})K_\infty^0.
$$
We have a surjection
$$
{\det}_{K(\mathfrak{f})}:\mathscr X(\mathfrak{f})
\to C(K(\mathfrak{f})),
$$
induced by the determinant map. The fibers of ${\det}_{K(\mathfrak{f})}$ are, by strong approximation, given by the translates of
$$
\mathscr X(\mathfrak{f})[1]={\det}_{K(\mathfrak{f})}^{-1}(1)\cong \Gamma(\mathfrak{f})\backslash\mathscr X_{n-1}
$$
where $\Gamma(\mathfrak{f})\subseteq\GL_{n-1}(k)$ is the arithmetic subgroup corresponding to $K(\mathfrak{f})$.

Consider the proper map
$$
s_{h^{(f)}}:\mathscr X(\mathfrak{f})\to\mathscr X_{n}^{\rm ad}(K),
$$
which results from the composition of the embedding $j$ with the translation $t_{h^{(f)}}$ and the adjoint map.

We define topologically for any $h\in G_n(\Adeles_\QQ^{(\infty)})$ and any $x\in \Adeles_k^{(\infty)\times}$ the topological modular symbol
$$
{\mathscr B}_{h,x}^{K,K'}:
H_{\rm c}^{rb_n}(\mathscr X_n^{\ad}(K); \underline{M}_{\boldsymbol{\mu}}(\OO))\otimes_\OO
H_{\rm c}^{rb_{n-1}}(\mathscr X_{n-1}^{\ad}(K'); \underline{M}_{\boldsymbol{\nu}}(\OO))\to
$$
$$
H^0(\mathscr X_{n-1}(j^{-1}(hKh^{-1})\cap K')[x];
s_{h}^*\underline{M}_{\boldsymbol{\mu}}(\OO)\otimes_\OO
\underline{M}_{\boldsymbol{\nu}}(\OO)),
$$
via
$$
\lambda=\sum_{\varepsilon}\alpha_\varepsilon\otimes\beta_\varepsilon\mapsto
\int\limits_{\mathscr X_{n-1}(j^{-1}(hKh^{-1})\cap K')[x]}
\sum_{\varepsilon}
s_{h}^*\alpha_\varepsilon \cup \ad^*\beta_\varepsilon.
$$
Assuming that $\lambda$ is an eigen class for the Hecke operator
$$
U_\mathfrak{p}:=
V_\mathfrak{p}\otimes V_\mathfrak{p}'
$$
with eigen value $\kappa_\lambda\in E^\times$ (i.e.\ $\lambda$ is of finite slope) we set
$$
\kappa_\lambda(\mathfrak{f}):=
\kappa_\lambda^{-\nu_\mathfrak{p}(\mathfrak{f})},
$$
and for any $\mathfrak{f}$ with $\nu_\mathfrak{p}(\mathfrak{f})\geq m$, we considering $x+\mathfrak{f}$ as an element of $C(K(\mathfrak{f}))$, and define
$$
\mu_\lambda(x+\mathfrak{f}):=
\kappa_\lambda(\mathfrak{f})\cdot
(iTt_{d_{(x)}f t_{(f)}}^*)
(s_1^*(iT_{h^{(f)}})\otimes 1)
{\mathscr B}_{h^{(f)},xf^{\frac{n(n-1)}{2}}}^{
K,
K'}
(\lambda),
$$
as an element of
$$
H^0(
({}^0G_{n-1})^0;
j^*M_{\boldsymbol{\mu}}(E)\otimes_E
M_{\boldsymbol{\nu}}(E)),
$$
the latter space being independent of the levels, and hence independent of $\mathfrak{f}$ and $x$. To see this observe first that the translation related operators commute with integration. In particular we see that
$$
\mu_\lambda(x+\mathfrak{f})=
\kappa_\lambda(\mathfrak{f})\cdot
\!\!\!\!\!\!\!\!\!\!\!\!\!\!\!\!\!\!\!\!\!\!\!\!
\int\limits_{\mathscr X_{n-1}(d_{(x)}t_{(f)}K(\mathfrak{f})t_{(f)}^{-1}d_{(x)}^{-1})[1]}
\!\!\!\!\!\!\!\!\!\!\!\!\!\!\!\!\!\!
(iTt_{d_{(x)}f t_{(f)}}^*)
(s_1^*(iTt_{h^{(f)}})\otimes 1)
\sum_{\varepsilon}s_1^*(\alpha_\varepsilon)\cup\ad^*(\beta_\varepsilon),
$$
and the integrand lives in
$$
H_{\rm c}^{d_{n-1}}(\mathscr X_{n-1}(d_{(x)}t_{(f)}K(\mathfrak{f})t_{(f)}^{-1}d_{(x)}^{-1});
s_1^*
\underline{M}_{\boldsymbol{\mu}}(E)\otimes_E \underline{M}_{\boldsymbol{\nu}}(E)).
$$
Now our claim follows as the arithmetic subgroup corresponding to $d_{(x)}t_{(f)}K(\mathfrak{f})t_{(f)}^{-1}d_{(x)}^{-1}$ on the component with determinant $1$ is Zariski dense in $({}^0G_{n-1})^0$.

\subsection{The distribution relation}

In this section we study the effect of the Hecke operator $U_\mathfrak{p}$ on our modular symbol and prove the distribution relation if $\lambda$ is an eigen vector for this operator with non-zero eigen value $\kappa_\lambda\in E^\times$.

To begin with, we need to refine of a known result of Schmidt about the relation of the matrices $h^{(f)}$ and $h^{(f\varpi)}$ (cf.\ \citep[Lemma 3.2]{schmidt2001} or \citep[Lemma B.0.1]{januszewski2009}).

For any $x\in\Adeles_k^{(\infty)}$ and any $u\in U_{n}(\OO_{\mathfrak{p}}),$ $w\in U_{n-1}(\OO_{\mathfrak{p}})$ set
$$
h(u,w)\;:=\;
t_{(f)}j(w)t_{(f)}^{-1}
\cdot h^{(1)}\cdot
t_{(f)}u^{-1}t_{(f)}^{-1}
\;\in\; G_n(\OO_\mathfrak{p}/\mathfrak{fp}).
$$
This defines a group action on a subset of $G_n(\OO_\mathfrak{p}/\mathfrak{fp})$. Writing $u=(u_{ij})$ and $w=(w_{ij})$ then $h(u,w)$ only depends on the terms
$$
u_{12},u_{23},\dots,u_{n-1 n}
$$
and
$$
w_{12},w_{23},\dots,w_{n-2 n-1}.
$$
Note that we have for the same reason
$$
t_{(f)}u^{-1}t_{(f)}^{-1}\;\equiv\; t_{(f)}(-u_{ij})_{ij}t_{(f)}^{-1}\pmod{\mathfrak{fp}}.
$$
We conclude that $h(u,w)$ equals the matrix
$$
\begin{pmatrix}
          0 &  \hdots&  \hdots&     0 &fw_{12}&           1 &1+fw_{12}-fu_{n-1n}\\
      \vdots&        &  \adots&fw_{23}&     1 &-fu_{n-2 n-1}&1+fw_{23}          \\
      \vdots&  \adots&  \adots& \adots& \adots&           0 &1+fw_{34}\\
          0 &  \adots&  \adots& \adots& \adots&       \vdots&   \vdots\\
fw_{n-2 n-1}&      1 &-fu_{23}& \adots&       &       \vdots&1+fw_{n-2 n-1}\\
          1 &-fu_{12}& \adots &       &       &       \vdots&         1    \\
          0 &      0 & \hdots & \hdots& \hdots&            0 &        1    \\
\end{pmatrix}
$$
Define the lower triangular unipotent matrices
$$
u^-:=
\begin{pmatrix}
         1 &  &&&\\
fu_{n-2n-1}&1 &&&\\
           &\ddots & \ddots &  &\\
           & &fu_{32} & 1 & \\
           & &   & fu_{21} & 1\\
\end{pmatrix}\in t_{(f)}^{-1}U_{n-1}^-(\OO_\mathfrak{p})t_{(f)},
$$
and
$$
w^-:=
\begin{pmatrix}
         1 &  &&&\\
-fw_{n-2n-1}&1 &&&\\
           &\ddots & \ddots &  &\\
           & &-fw_{32} & 1 & \\
           & &   & -fw_{21} & 1\\
\end{pmatrix}\in t_{(f)}^{-1}U_{n-1}^-(\OO_\mathfrak{p})t_{(f)}.
$$
Then the first $n-1$ columns of the matrix
$$
u^-\cdot h(u,w)\cdot w^-
$$
equal those of $h^{(1)}$ and the last column is the tranpose of
$$
(1+fw_{12}-fu_{n-1n}, 1+fw_{23}-fu_{n-2n-1}, \dots,1+fw_{n-2n-1}-fu_{23},1-fu_{12},1).
$$
Set
$$
d(u,w):=\diag(1-fu_{12},1+fw_{n-2n-1}-fu_{23},\dots,1+fw_{12}-fu_{n-1n})
$$
and
$$
d'(u,w):=\diag(1-fw_{12}+fu_{n-1n},\dots,1-fw_{n-2n-1}+fu_{23},1+fu_{12}).
$$
Note that
$$
\det(d(u,w))\cdot\det(d'(u,w))=1.
$$
\begin{lemma}\label{lem:distributionmatrixrelation}
For any $u,w$ as above there exist matrices $k_{u,w}'\in I_{n-1}^{(m)}$ and $k_{u,w}\in I_{n}^{(m)}$ with the property that
$$
t_{(\varpi)}^{-1} j(w)\cdot h^{(f)}\cdot u^{-1} t_{(\varpi)}=j(k')\cdot h^{(f\varpi)}\cdot k^{-1}
$$
and that furthermore sending $u,w$ to
$$
\det(k_{u,w})=\det(k_{u,w}')\pmod{\mathfrak{fp}}
$$
defines an epimorphism of groups
$$
U_n(\OO_\mathfrak{p})/t_{(\varpi)}U_n(\OO_\mathfrak{p})t_{(\varpi)}^{-1}\times
U_{n-1}(\OO_\mathfrak{p})/t_{(\varpi)}U_{n-1}(\OO_\mathfrak{p})t_{(\varpi)}^{-1}
\to
1+\mathfrak{f}/1+\mathfrak{fp}.
$$
\end{lemma}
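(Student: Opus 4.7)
The plan is to reduce the matrix identity to the explicit normal form $u^- h(u,w) w^- = h^{(1)} + E$ with $E$ supported only on the last column, as computed in the paragraphs preceding the lemma. First, since $t_{(f\varpi)}=t_{(f)}t_{(\varpi)}$, one has $h^{(f\varpi)}=t_{(\varpi)}^{-1}h^{(f)}t_{(\varpi)}$, and inserting the defining relation $h(u,w)=t_{(f)}j(w)t_{(f)}^{-1}\cdot h^{(1)}\cdot t_{(f)}u^{-1}t_{(f)}^{-1}$ yields directly
$$
t_{(\varpi)}^{-1} j(w)\,h^{(f)}\,u^{-1}\,t_{(\varpi)} \;=\; t_{(f\varpi)}^{-1}\,h(u,w)\,t_{(f\varpi)}.
$$
It therefore suffices to rewrite the right hand side in the form $j(k')\,h^{(f\varpi)}\,k^{-1}$.

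Next, using $h(u,w)=(u^-)^{-1}\cdot u^- h(u,w) w^-\cdot (w^-)^{-1}$, substituting the explicit normal form $u^- h(u,w) w^- = h^{(1)}\cdot D$ (with $D$ the correction matrix whose only nontrivial column is the last one, computed above), and conjugating by $t_{(f\varpi)}$, one brackets around $h^{(f\varpi)}$ to obtain
$$
\bigl[t_{(f\varpi)}^{-1}(u^-)^{-1}t_{(f\varpi)}\bigr]\cdot h^{(f\varpi)}\cdot \bigl[t_{(f\varpi)}^{-1} D (w^-)^{-1} t_{(f\varpi)}\bigr].
$$
The left bracket equals $j(k')$ for some $k'\in\GL_{n-1}$, since $u^-\in t_{(f)}^{-1}j(U_{n-1}^-(\OO_\mathfrak{p}))t_{(f)}$ lies in the image of $j$ and conjugation by $t_{(f\varpi)}$ preserves this image (it acts on the $\GL_{n-1}$-block as conjugation by the $\GL_{n-1}$-version of $t_{(f\varpi)}$ up to a central scalar). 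The right bracket yields $k^{-1}$ after a Bruhat-style rearrangement absorbing the entries produced by conjugating $D$, and this rearrangement is precisely what produces the diagonal factors recorded as $d(u,w)$ and $d'(u,w)$ in the lemma. The Iwahori conditions $k\in I_n^{(m)}$ and $k'\in I_{n-1}^{(m)}$ then follow from the observation that the nontrivial off-diagonal entries of $u^-$ and $w^-$ carry a factor of $f$ while conjugation by $t_{(f\varpi)}$ scales the $(i,j)$-entry by $(f\varpi)^{j-i}$, so every strictly lower-triangular entry lands in $\mathfrak{fp}\subseteq\mathfrak{p}^{m+1}$ as soon as $\nu_\mathfrak{p}(\mathfrak{f})\geq m$.

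Finally I compute determinants and verify the group-theoretic claims. By the above extraction the diagonal residues of $k_{u,w}$ and $k_{u,w}'$ modulo $\mathfrak{fp}$ are read off from $d(u,w)$ and $d'(u,w)^{-1}$ respectively; the identity $\det(d(u,w))\cdot\det(d'(u,w))=1$ recorded in the text then forces $\det(k_{u,w})\equiv \det(k_{u,w}')\pmod{\mathfrak{fp}}$. Since the diagonal entries of $d(u,w)$ and $d'(u,w)$ depend only on the superdiagonal entries $u_{i,i+1}$ and $w_{j,j+1}$, which parametrise the coset spaces $U_{n}(\OO_\mathfrak{p})/t_{(\varpi)}U_{n}(\OO_\mathfrak{p})t_{(\varpi)}^{-1}$ and its $\GL_{n-1}$-analogue, the map descends to these quotients; multiplicativity follows from the linearity of the relevant factors modulo $\mathfrak{fp}$. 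Surjectivity onto $1+\mathfrak{f}/1+\mathfrak{fp}$ is then immediate, since varying $u_{12}$ alone over representatives of $\OO_\mathfrak{p}/\mathfrak{p}$ (with all remaining superdiagonal entries taken trivial) already traces out every class of $1-fu_{12}$. The main obstacle I expect is the second paragraph: the extraction of $k$ from the right bracket requires absorbing the entries produced by conjugating $D$ by $t_{(f\varpi)}$, and the precise bookkeeping depends on the interplay between the last-column structure of $D$ and the lower-unipotent structure of $w^-$, where the identity $\det(d)\det(d')=1$ is crucial for integrality of the resulting factors.
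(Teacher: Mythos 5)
Your proposal follows the paper's own proof essentially step for step: the reduction $t_{(\varpi)}^{-1}j(w)\,h^{(f)}\,u^{-1}t_{(\varpi)}=t_{(f\varpi)}^{-1}h(u,w)\,t_{(f\varpi)}$, the appeal to the normal form of $u^-h(u,w)w^-$ computed in the paragraphs before the lemma, the extraction of $k'$ and $k$ by conjugating the correction factors by $t_{(f\varpi)}$, and the use of $\det(d(u,w))\det(d'(u,w))=1$ for the determinant statement are exactly the paper's ingredients (indeed the paper does not even spell out the epimorphism claim, which you treat more explicitly and correctly, including surjectivity via $u_{12}$).

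The one point where your plan as literally written would fail is the bracketing in your second paragraph. If you keep the entire correction $D$ (supported on the last column, with $D-{\bf 1}_n$ having entries only in $\mathfrak{f}$) to the right of $h^{(f\varpi)}$ and only then conjugate by $t_{(f\varpi)}$, the $(i,n)$-entries of $D-{\bf 1}_n$ get multiplied by $(f\varpi)^{i-n}$, which for small $i$ and $n\geq 3$ destroys integrality; so $t_{(f\varpi)}^{-1}D(w^-)^{-1}t_{(f\varpi)}$ is not in $I_n^{(m)}$, and the ``Bruhat-style rearrangement'' cannot be deferred until after the conjugation. The paper avoids this by establishing the exact normal form \emph{before} conjugating, namely $j(d'(u,w))\cdot u^-\cdot h(u,w)\cdot w^-\cdot d(u,w)\cdot n=h^{(1)}$ with $d,d'$ diagonal (hence harmless under conjugation by $t_{(f\varpi)}$) and a residual $n\in I_n^{(m)}$ with $n\equiv{\bf 1}_n\pmod{\mathfrak{fp}}$; then $k'_{u,w}=t_{(f\varpi)}^{-1}j(d')u^-t_{(f\varpi)}$ and $k_{u,w}^{-1}=t_{(f\varpi)}^{-1}j(w^-d)\,n\,t_{(f\varpi)}$ are read off directly. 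You correctly flag this as the main obstacle and name the right objects ($d$ and $d'$), but the redistribution of the last-column correction into a left row-scaling $j(d')$ and a right column-scaling $j(d)$ must be carried out at the level of the exact matrix identity, not as an afterthought to the conjugation.
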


\begin{proof}
By the above discussion there is a matrix $n\in I_n^{(m)}$ with
$$
n\equiv{\bf1}_n\pmod{\mathfrak{fp}}
$$
and
$$
j(d'(u,w))\cdot u^-\cdot h(u,w)\cdot w^-\cdot d(u,w)\cdot n=h^{(1)}.
$$
Now observe that
$$
t_{(\varpi)}^{-1}\cdot j(w)\cdot h^{(f)}\cdot u^{-1}\cdot t_{(\varpi)}=
t_{(f\varpi)}^{-1}\cdot h(u,w)\cdot t_{(f\varpi)}.
$$
Therefore the choice
$$
k_{u,w}':=t_{(f\varpi)}^{-1}\cdot j(d'(u,w))\cdot u^-\cdot t_{(f\varpi)}\in I_{n-1}^{(m)}
$$
$$
k_{u,w}:=\left(t_{(f\varpi)}^{-1}\cdot j(w^-\cdot d(u,w))\cdot n\cdot t_{(f\varpi)}\right)^{-1}\in I_{n}^{(m)}
$$
proves the claim.
\end{proof}

As an application we have
\begin{lemma}\label{lem:distributionintegralrelation}
For any
$$
\alpha\in H^{rb_{n}}_{\rm c}(\mathscr X_{n}^{\rm ad}(K); \underline{M}_{\boldsymbol{\mu}}(E))
$$
and any
$$
\beta\in H^{rb_{n-1}}_{\rm c}(\mathscr X_{n-1}^{\rm ad}(K'); \underline{M}_{\boldsymbol{\nu}}(E))
$$
and any $u\in U_n(\OO_\mathfrak{p})$ and $w\in U_{n-1}(\OO_\mathfrak{p})$ we have
$$
(s_1^*(iT_{h^{(f)}})\otimes 1)
\mathscr B_{h^{(f)},xf^{\frac{n(n-1)}{2}}}^{
ut_{(\varpi)}K(ut_{(\varpi)})^{-1}, wt_{(\varpi)}K'(wt_{(\varpi)})^{-1}}
\!\!\!\!\!
(iTt_{ut_{(\varpi)}}^*\alpha\otimes
iTt_{w\varpi t_{(\varpi)}}^*\beta)
=
$$
$$
iTt_{d_{(\det(k_{u,w^{-1}}))}\varpi t_{(\varpi)}}^*\circ
(s_1^*iT_{h^{(f\varpi)}}\otimes 1)
\mathscr B_{h^{(f\varpi)},x\det(k_{u,w^{-1}})(f\varpi)^{\frac{n(n-1)}{2}}}^{
K,K'}
(\alpha\otimes\beta)
$$
\end{lemma}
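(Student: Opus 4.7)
The plan is to reduce the asserted identity to the matrix relation of Lemma~\ref{lem:distributionmatrixrelation}, combined with the multiplicativity \eqref{eq:Ttfunctor} of the sheaf-translation operators $iTt^*$. For brevity, write $k := k_{u,w^{-1}}$ and $k' := k'_{u,w^{-1}}$ throughout.

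First, I would unfold both sides using the definition of $\mathscr B^{L,L'}_{h,x}$ as an integral of a cup product over a fiber $\mathscr X_{n-1}(L)[c]$, where $L = j^{-1}(hKh^{-1})\cap K'$ and $c$ encodes the determinant class. Applying Lemma~\ref{lem:distributionmatrixrelation} to the pair $(u, w^{-1})$ and rearranging yields the key algebraic identity
\[
h^{(f)}\cdot u^{-1} t_{(\varpi)} \;=\; j(w)\cdot t_{(\varpi)}\cdot j(k')\cdot h^{(f\varpi)}\cdot k^{-1}.
\]
Since $k \in I_n^{(m)} \subseteq K$ and $k' \in I_{n-1}^{(m)} \subseteq K'$, their left translations preserve the Iwahori levels, so by the level hypothesis they act on cohomology only through their determinants, which by Lemma~\ref{lem:distributionmatrixrelation} satisfy $\det(k) = \det(k') \in 1+\mathfrak{f}$. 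This algebraic identity governs precisely how $s_{h^{(f)}}\circ t_{ut_{(\varpi)}}$ on the $\GL_n$-factor interacts with $t_{w\varpi t_{(\varpi)}}$ on the $\GL_{n-1}$-factor after restriction along $j$, and it rewrites the LHS as an expression built from $s_{h^{(f\varpi)}}$ together with an overall translation by $d_{(\det k)}\varpi t_{(\varpi)}$.

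Second, using \eqref{eq:Ttfunctor} to collapse compositions of $iTt^*$ operators, I would verify the asserted equality in $H^0(({}^0G_{n-1})^0;\, j^*M_{\boldsymbol\mu}(E)\otimes_E M_{\boldsymbol\nu}(E))$. The level subgroup on the left, namely $j^{-1}(h^{(f)}ut_{(\varpi)}K(ut_{(\varpi)})^{-1}(h^{(f)})^{-1})\cap wt_{(\varpi)}K'(wt_{(\varpi)})^{-1}$, is conjugated via the element supplied by the matrix identity into $j^{-1}(h^{(f\varpi)}K(h^{(f\varpi)})^{-1})\cap K'$; the conjugating element projects, after Iwahori-level absorption, to $d_{(\det k)}\varpi t_{(\varpi)}$. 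The induced change of variables in the integral produces both the operator $iTt^*_{d_{(\det k)}\varpi t_{(\varpi)}}$ outside the modular symbol on the right and the shift $xf^{\frac{n(n-1)}{2}} \mapsto x\det(k)(f\varpi)^{\frac{n(n-1)}{2}}$ of the ray class component, exactly as required.

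The main obstacle is the bookkeeping: one must carefully track how the non-scalar Iwahori-level matrices $k$ and $k'$ act on the coefficient sheaves $\underline{M}_{\boldsymbol\mu}$ and $\underline{M}_{\boldsymbol\nu}$, and verify that after composing with $s_1^*(iT_{h^{(f\varpi)}})\otimes 1$ the only surviving effect is through $\det(k)$. This requires the identity $s_h = \ad\circ t_h\circ j$ applied repeatedly, compatibility of the $T_\bullet$ isomorphisms with conjugation inside $\underline{M}_\bullet(\OO)$, and the surjectivity statement in Lemma~\ref{lem:distributionmatrixrelation} to ensure $\det(k)$ traverses $1+\mathfrak{f}/1+\mathfrak{fp}$ as $u,w$ vary, which is exactly what makes the covering \eqref{eq:Cfcover} relevant. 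This step is the higher-rank, totally real analogue of the classical distribution relation in the Mazur--Swinnerton-Dyer construction, adapted to the present setting with possibly nontrivial central characters.
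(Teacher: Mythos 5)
Your proposal is correct and follows essentially the same route as the paper's proof: unfold the modular symbol as an integral, move the $w\varpi t_{(\varpi)}$-translation across via \eqref{eq:Ttfunctor}, invoke Lemma \ref{lem:distributionmatrixrelation} to rewrite the conjugating matrix in terms of $h^{(f\varpi)}$, absorb $k\in K$ and $k'\in K'$ into the levels (so $iTt_k^*\alpha=\alpha$), and replace $k'$ by $d_{(\det k)}$ using that $d_{(\det(k)^{-1})}k'$ has determinant one and hence lies in $({}^0G_{n-1})^0$. The only superfluous ingredient is the appeal to surjectivity of $(u,w)\mapsto\det(k_{u,w})$ onto $1+\mathfrak{f}/1+\mathfrak{fp}$, which is what drives the counting in Theorem \ref{thm:distribution} but plays no role in this lemma.
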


\begin{proof}
Under the above hypothesis we have
$$
\mathscr B_{h^{(f)},xf^{\frac{n(n-1)}{2}}}^{
ut_{(\varpi)}K(ut_{(\varpi)})^{-1}, wt_{(\varpi)}K'(wt_{(\varpi)})^{-1}}
(iTt_{ut_{(\varpi)}}^*\alpha\otimes
iTt_{w\varpi t_{(\varpi)}}^*\beta)=
$$
$$
\int\limits_{\mathscr X_{n-1}(j^{-1}(
h^{(f)}ut_{(\varpi)}
K
(h^{(f)}ut_{(\varpi)})^{-1}
)
\cap
wt_{(\varpi)}K'(wt_{(\varpi)})^{-1})
[xf^{\frac{n(n-1)}{2}}]}
\!\!\!\!\!\!\!\!\!\!\!\!\!\!\!\!\!\!\!\!\!\!\!\!\!\!\!\!\!\!\!\!\!\!\!\!\!\!\!\!\!\!\!\!\!\!\!\!\!\!\!\!
s_{h^{(f)}}^*(iTt_{ut_{(\varpi)}}^*\alpha)
\cup
\ad^*(iTt_{w\varpi t_{(\varpi)}}^*\beta).
$$
Now due to \eqref{eq:Ttfunctor} we have an identity of pullbacks
$$
iTt_{(w \varpi t_{(\varpi)})^{-1}}^*=
iTt_{(\varpi t_{(\varpi)})^{-1}}^*,
$$
because $w\in{}^0G_{n-1}$. Hence the composition of this pullback with $s_{1}^*iT_{h^{(f)}}\otimes 1$ maps the above section to
$$
(
s_1^*iT_{t_{(\varpi)}^{-1}j(w)^{-1}h^{(f)}ut_{(\varpi)}}\otimes 1)
\!\!\!\!\!\!\!\!\!\!\!\!\!\!\!\!\!\!\!\!\!\!\!\!\!\!\!\!\!\!\!\!\!\!\!\!\!\!\!\!\!\!\!\!\!\!\!\!\!\!\!\!\!\!\!\!
\int\limits_{
\mathscr X_{n-1}(
j^{-1}(
t_{(\varpi)}^{-1}j(w)^{-1}h^{(f)}ut_{(\varpi)}
K
(t_{(\varpi)}^{-1}j(w)^{-1}h^{(f)}ut_{(\varpi)})^{-1}
)
\cap
K')[x(f\varpi)^{\frac{n(n-1)}{2}}]}
\!\!\!\!\!\!\!\!\!\!\!\!\!\!\!\!\!\!\!\!\!\!\!\!\!\!\!\!\!\!\!\!\!\!\!\!\!\!\!\!\!\!\!\!\!\!\!\!\!\!\!\!\!\!\!\!\!\!\!\!
s_{t_{(\varpi)}^{-1}j(w)^{-1}h^{(f)}u t_{(\varpi)}}^*\alpha
\cup
\ad^*\beta.
$$
Now by Lemma \ref{lem:distributionmatrixrelation} we have the elements $k'=k_{u^{-1},w}'\in K'$ and $k=k_{u^{-1},w}\in K$ with $\det (k')=\det(k)\in 1+\mathfrak{f}$, such that
$$
j(k')^{-1}\cdot
h^{(f\varpi)}\cdot
k
=
t_{(\varpi)}^{-1}
j(w)^{-1}\cdot
h^{(f)}\cdot
u t_{(\varpi)}.
$$
Consequently the pullback $iTt_{k'}^*$ maps the above section to
$$
(s_1^*
iT_{h^{(f\varpi)}}\otimes 1)
\!\!\!\!\!\!\!\!\!\!\!\!\!\!\!\!\!\!\!\!\!\!\!\!\!\!\!\!\!\!\!\!\!\!\!\!\!\!\!\!\!\!
\int\limits_{
\mathscr X_{n-1}(
j^{-1}(
h^{(f\varpi)}
K
(h^{(f\varpi)})^{-1}
)
\cap
K'
)[x
\det(k)^{-1}
(f\varpi)^{\frac{n(n-1)}{2}}]}
\!\!\!\!\!\!\!\!\!\!\!\!\!\!\!\!\!\!\!\!\!\!\!\!\!\!\!\!\!\!\!\!\!\!\!\!\!\!\!\!\!\!
s_{h^{(f\varpi)}}^*(iTt_{k}^*\alpha)
\cup
\ad^*(iTt_{k'}^*\beta).
$$
We have
$$
iTt_{k}^*(\alpha)=\alpha
$$
and similarly for $\beta$. Now we observe that
$$
d_{(\det(k)^{-1})}\cdot k'\in({}^0G_{n-1})^0,
$$
as this element has determinant equal to $1$. Hence
$$
iTt_{k'}^*=iTt_{d_{(\det(k))}}^*,
$$
and the claim follows.
\end{proof}

We are now in a position to prove

\begin{theorem}\label{thm:distribution}
For any $x\in\Adeles_k^{(\infty)}$ and any ideal $\mathfrak{f}=\mathfrak{p}^{\nu_\mathfrak{p}(\mathfrak{f})}$ with $\nu_\mathfrak{p}(\mathfrak{f})\geq m$ we have
$$
\mu_\lambda(x+\mathfrak{f})=\sum_{a\!\!\!\pmod{\mathfrak{p}}}
\mu_\lambda(x+af+\mathfrak{f}\mathfrak{p}).
$$
In particular $\mu_\lambda$ is a distribution on $C(K(\mathfrak{p}^\infty))=\varprojlim\limits_{\mathfrak{f}}C(K(\mathfrak{f}))$ with values in
$$
H^0(({}^0G_{n-1})^0; j^*M_{\boldsymbol{\mu}}(E)\otimes_E M_{\boldsymbol{\nu}}(E)).
$$
\end{theorem}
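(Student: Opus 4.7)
The strategy is to rewrite the right-hand side as the action of the Hecke operator $U_\mathfrak{p}=V_\mathfrak{p}\otimes V_\mathfrak{p}'$ on the modular symbol at level $h^{(f)}$, then exploit the eigenvector hypothesis $U_\mathfrak{p}\lambda=\kappa_\lambda\lambda$ to reduce the RHS to $\mu_\lambda(x+\mathfrak{f})$. The crucial combinatorial input is the group epimorphism of Lemma \ref{lem:distributionmatrixrelation}, which identifies $(u,w)$-cosets with residues $a\pmod{\mathfrak{p}}$ classifying the fibres of the covering map $C(K(\mathfrak{f}\mathfrak{p}))\to C(K(\mathfrak{f}))$ in \eqref{eq:Cfcover}.

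First I would expand $\mu_\lambda(x+af+\mathfrak{f}\mathfrak{p})$ from its defining formula and factor out the scalar $\kappa_\lambda(\mathfrak{f}\mathfrak{p})=\kappa_\lambda^{-1}\cdot\kappa_\lambda(\mathfrak{f})$. Then, using the explicit coset decompositions of $V_\mathfrak{p}$ and $V_\mathfrak{p}'$ from Lemma \ref{lem:hecke1}, I would reindex the sum $\sum_{a\pmod{\mathfrak{p}}}$ as a double sum over pairs $(u,w)$ of representatives of $U_n(\OO_{k,\mathfrak{p}})/t_{(\varpi)}U_n(\OO_{k,\mathfrak{p}})t_{(\varpi)}^{-1}$ and $U_{n-1}(\OO_{k,\mathfrak{p}})/t_{(\varpi)}U_{n-1}(\OO_{k,\mathfrak{p}})t_{(\varpi)}^{-1}$. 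The epimorphism of Lemma \ref{lem:distributionmatrixrelation} groups these pairs into fibres indexed by $\det(k_{u,w^{-1}})\in 1+\mathfrak{f}/1+\mathfrak{f}\mathfrak{p}$, each of equal size; this rearrangement expresses the RHS as a single sum over all $(u,w)$, with the common kernel cardinality being absorbed into the class of the translation $iTt_{d_{(\det(k_{u,w^{-1}}))}\varpi t_{(\varpi)}}^{*}$ thanks to the Zariski density of the relevant arithmetic subgroup in $({}^0G_{n-1})^0$.

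Next I would apply Lemma \ref{lem:distributionintegralrelation} in reverse to each summand, converting the modular symbol at levels $K,K'$ with parameter $h^{(f\varpi)}$ into a modular symbol at levels $ut_{(\varpi)}K(ut_{(\varpi)})^{-1}$, $wt_{(\varpi)}K'(wt_{(\varpi)})^{-1}$ with parameter $h^{(f)}$, evaluated on the shifted classes $iTt_{ut_{(\varpi)}}^{*}\alpha_\varepsilon\otimes iTt_{w\varpi t_{(\varpi)}}^{*}\beta_\varepsilon$. Reassembling the double sum and invoking Lemma \ref{lem:hecke1} identifies the result as
\[
\kappa_\lambda(\mathfrak{f})\cdot\kappa_\lambda^{-1}\cdot(iTt_{d_{(x)}ft_{(f)}}^{*})(s_1^{*}(iT_{h^{(f)}})\otimes 1)\mathscr{B}_{h^{(f)},xf^{n(n-1)/2}}^{K,K'}(U_\mathfrak{p}\cdot\lambda).
\]
Plugging in $U_\mathfrak{p}\lambda=\kappa_\lambda\lambda$ absorbs the stray $\kappa_\lambda^{-1}$ and recovers exactly $\mu_\lambda(x+\mathfrak{f})$, establishing the distribution relation. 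The existence of a distribution on $C(K(\mathfrak{p}^\infty))=\varprojlim_{\mathfrak{f}}C(K(\mathfrak{f}))$ with values in $H^0(({}^0G_{n-1})^0;j^{*}M_{\boldsymbol{\mu}}(E)\otimes_E M_{\boldsymbol{\nu}}(E))$ then follows from the universal property of the inverse limit, together with the independence of the target space on $\mathfrak{f}$ and $x$ already noted after the definition of $\mu_\lambda$.

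The main obstacle is the bookkeeping at the level of the translation operators: one must check that $iTt_{d_{(\det(k_{u,w^{-1}}))}\varpi t_{(\varpi)}}^{*}$ from Lemma \ref{lem:distributionintegralrelation} coincides, in the target cohomology $H^0(({}^0G_{n-1})^0;\cdots)$, with the translation $iTt_{d_{(x+af)}f\varpi t_{(f\varpi)}}^{*}$ that appears in $\mu_\lambda(x+af+\mathfrak{f}\mathfrak{p})$ whenever $x\det(k_{u,w^{-1}})\equiv x+af\pmod{\mathfrak{f}\mathfrak{p}}$. This rests on functoriality \eqref{eq:Ttfunctor} combined with the fact that $d_{(z)}\in ({}^0G_{n-1})^0$ for $z$ in the kernel of $\Adeles_k^\times\to k^\times\backslash\Adeles_k^\times/(1+\mathfrak{f}\mathfrak{p})\prod_{\mathfrak{q}\neq\mathfrak{p}}U_\mathfrak{q}$, so that only the class of the shift in $C(K(\mathfrak{f}\mathfrak{p}))$ matters.
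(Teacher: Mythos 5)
Your proposal is the paper's own argument run in the opposite direction: the paper starts from the eigen equation $\kappa_\lambda\cdot\lambda=U_\mathfrak{p}\lambda$, expands $U_\mathfrak{p}=V_\mathfrak{p}\otimes V_\mathfrak{p}'$ via the coset decomposition of Lemma \ref{lem:hecke1}, and pushes the resulting sum over pairs $(u,w)$ through Lemmata \ref{lem:distributionmatrixrelation} and \ref{lem:distributionintegralrelation} to arrive at the sum over $a\pmod{\mathfrak{p}}$; you start from the right-hand side and apply the same two lemmata in reverse. The ingredients, the role of the epimorphism onto $1+\mathfrak{f}/1+\mathfrak{f}\mathfrak{p}$, and the final translation bookkeeping via \eqref{eq:Ttfunctor} and $d_{(z)}\in({}^0G_{n-1})^0$ all coincide with the paper's proof.

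The one step that does not work as you wrote it is the disposal of the multiplicity. Reindexing $\sum_{a\!\pmod{\mathfrak{p}}}$ as a sum over all pairs $(u,w)$ overcounts each term by the cardinality of the kernel of the epimorphism of Lemma \ref{lem:distributionmatrixrelation}, namely by
$(U_n(\OO_{k,\mathfrak{p}}):t_{(\varpi)}U_n(\OO_{k,\mathfrak{p}})t_{(\varpi)}^{-1})\cdot(U_{n-1}(\OO_{k,\mathfrak{p}}):t_{(\varpi)}U_{n-1}(\OO_{k,\mathfrak{p}})t_{(\varpi)}^{-1})\cdot\absNorm(\mathfrak{p})^{-1}$.
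This is a scalar, and a scalar cannot be \lq\lq{}absorbed into the class of the translation\rq\rq{} by Zariski density; Zariski density only identifies the target $H^0$ independently of the level, it does not rescale sections. The factor must instead be cancelled against the covering degree $[K(\mathfrak{f}):K(\mathfrak{f}\mathfrak{p})]$ that appears when the level-$\mathfrak{f}$ modular symbol is rewritten over the deeper level $\tilde K$, $\tilde K'$ on which the classes $iTt_{ut_{(\varpi)}}^*\alpha_\varepsilon\otimes iTt_{w\varpi t_{(\varpi)}}^*\beta_\varepsilon$ live. The paper closes the proof exactly here, by quoting the index formula \eqref{eq:gammaindex} together with $(U_n(\OO_{k,\mathfrak{p}}):t_{(f)}U_n(\OO_{k,\mathfrak{p}})t_{(f)}^{-1})=\absNorm(\mathfrak{f})^{\frac{(n+1)n(n-1)}{6}}$, which make the two scalars match. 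With that correction your argument is complete and identical in substance to the paper's.
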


\begin{proof}
By our hypothesis $\lambda$ is an eigen vector for
$$
U_\mathfrak{p}=
V_\mathfrak{p}\otimes V_\mathfrak{p}',
$$
which by Lemma \ref{lem:hecke1} means that
$$
\kappa_\lambda
\cdot\lambda=
\sum_{\varepsilon,u,w}
(iTt_{ut_{(\varpi)}}^*\alpha_\varepsilon)\otimes
(iTt_{w\varpi t_{(\varpi)}}^*\beta_\varepsilon),
$$
where both sides are interpreted as cohomology classes with respect to the intersections $\tilde{K}$ resp.\ $\tilde{K}'$ of the compact open subgroups $ut_{(\varpi)}K(ut_{(\varpi)})^{-1}$ for all $u$ and $wt_{(\varpi)}K'(wt_{(\varpi)})^{-1}$ for all $w$ respectively. By Lemmata \ref{lem:distributionmatrixrelation} and \ref{lem:distributionintegralrelation} this yields the decomposition
$$
[K(\mathfrak{f}):K(\mathfrak{fp})]\cdot
(s_1^*iT_{h^{(f)}}\otimes 1)
\!\!\!\!\!
\int\limits_{\mathscr X(\mathfrak{f})[xf^{\frac{n(n-1)}{2}}]}
\!\!\!\!\!
(s_{h^{(f)}}^*\alpha\cup \ad^*\beta)=
$$
$$
\frac{(U_n(\OO_{k,\mathfrak{p}}):t_{(\varpi)}U_n(\OO_{k,\mathfrak{p}})t_{(\varpi)}^{-1})\cdot
(U_{n-1}(\OO_{k,\mathfrak{p}}):t_{(\varpi)}U_{n-1}(\OO_{k,\mathfrak{p}})t_{(\varpi)}^{-1})}
{\absNorm(\mathfrak{p})}\cdot
$$
$$
\kappa_\lambda\cdot
\!\!\!\!\!
\sum_{a\!\!\!\pmod{\mathfrak{p}}}
iTt_{\varpi t_{(\varpi)}}^*\circ
(s_1^*iT_{h^{(f\varpi)}}\otimes 1)
\!\!\!\!\!
\int\limits_{\mathscr X(\mathfrak{fp})[(x+af)(f\varpi)^{\frac{n(n-1)}{2}}]}
\!\!\!\!\!
(s_{h^{(f\varpi)}}^*\alpha\cup \ad^*\beta).
$$
By the following known index formulas
\begin{equation}
(I_{n-1}^{(m)}:K(\mathfrak{f}))=
\absNorm(\mathfrak{f})^{\frac{(n+1)n(n-1)+n(n-1)(n-2)}{6}}
\label{eq:gammaindex}
\end{equation}
cf.\ \citep[Lemmata 3.6 and 3.7]{schmidt2001}, and
$$
(U_n(\OO_{k,\mathfrak{p}}):t_{(f)}U_n(\OO_{k,\mathfrak{p}})t_{(f)}^{-1})=\absNorm(\mathfrak{f})^{\frac{(n+1)n(n-1)}{6}},
$$
cf.\ \citep[Proof of Lemma 3.2]{kazhdanmazurschmidt2000}, the claim follows.
\end{proof}

\subsection{Boundedness in the ordinary case}

Let $E/\QQ(\boldsymbol{\mu},\boldsymbol{\nu})$ be a number field. Fix an embedding $i_{\mathfrak{p}}:E\to\overline{k}_\mathfrak{p}$. We write $\absnorm{\cdot}_\mathfrak{p}$ for the norm on $E$ induced by $i_{\mathfrak{p}}$ and let $\OO_{E,(\mathfrak{p})}\subseteq E$ denote its valuation ring. Then if $\lambda$ is an eigen vector for $\kappa_\lambda\in E$ we say that $\lambda$ is {\em ordinary at $\mathfrak{p}$} if
\begin{equation}
\absnorm{\kappa_\lambda}_\mathfrak{p}=
%\absNorm(\mathfrak{p})^{-\nu_{\min}\cdot\frac{n(n-1)}{2}}.
\absnorm{\varpi}^{\nu_{\min}\cdot\frac{n(n-1)}{2}}.
\label{eq:lambdaordinarity}
\end{equation}
We say that $\lambda$ is {\em of finite slope} if $\kappa_\lambda\neq 0$ and then the integer
$$
\nu_\mathfrak{p}\left(\kappa_\lambda\cdot\varpi^{-\nu_{\min}\cdot\frac{n(n-1)}{2}}\right)\in\ZZ
$$
is called the {\em slope} of $\lambda$ at $\mathfrak{p}$. So $\lambda$ is ordinary if and only if it is of slope $0$.

We know that the associated sheaves for $M_{\boldsymbol{\mu}}(\OO_{E,(\mathfrak{p})})$ and $M_{\boldsymbol{\nu}}(\OO_{E,(\mathfrak{p})})$ are stable under $G_{n}(\Adeles_\QQ^{(\infty p)})$ resp.\ $G_{n-1}(\Adeles_\QQ^{(\infty p)})$. We assume that
\begin{equation}
j^*M_{\boldsymbol{\mu}}(\OO_{E,(\mathfrak{p})})\otimes_{\OO_{E,(\mathfrak{p})}}
M_{\boldsymbol{\nu}}(\OO_{E,(\mathfrak{p})})\otimes_{\OO_{E,(\mathfrak{p})}}
M_{(-\nu_{\min})}(\OO_{E,(\mathfrak{p})})
\label{eq:stability}
\end{equation}
is stable under $\mathfrak{p}$-integral matrices, i.e.\ under the action of
$$
G_{n-1,\mathfrak{p}}:=
\{g\in\GL_{n-1}(k)\mid g\in\OO_{k,{(\mathfrak{p})}}^{n-1\times n-1}\}.
$$
Integral models with this property clearly exist, as we know from section \ref{sec:cohint} that the module \eqref{eq:stability} decomposes into
$$
\bigoplus_{\nu\in{\rm Emb}(\check{\boldsymbol{\nu}},\boldsymbol{\mu})}
M_{\boldsymbol{\check{\nu}}}
\otimes
M_{\boldsymbol{\nu}}[\nu-\nu_{\min}]
$$
and $\nu\geq \nu_{\min}$.

\begin{theorem}\label{thm:boundedness}
If $\lambda$ is ordinary at $\mathfrak{p}$ and $\mathfrak{p}$-integral, then $\mu_\lambda$ takes values in
$$
H^0(
({}^0G_{n-1})^{0};
j^*M_{\boldsymbol{\mu}}(\OO_{E,(\mathfrak{p})})\otimes_{\OO_{E,(\mathfrak{p})}}
M_{\boldsymbol{\nu}}(\OO_{E,(\mathfrak{p})})).
$$
\end{theorem}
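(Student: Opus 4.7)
The plan is to trace $\mathfrak{p}$-adic integrality through the four operations composing $\mu_\lambda(x+\mathfrak{f})$: the topological modular symbol $\mathscr{B}^{K,K'}_{h^{(f)},xf^{n(n-1)/2}}$, the pullbacks $s_1^*(iT_{h^{(f)}})\otimes 1$ and $iTt^*_{d_{(x)}ft_{(f)}}$, and the scaling by $\kappa_\lambda(\mathfrak{f})=\kappa_\lambda^{-\nu_\mathfrak{p}(\mathfrak{f})}$. By the construction in section~\ref{sex:classes} the class $\lambda$ is integral in $\underline{M}_{\boldsymbol{\mu}}(\OO_{E,\mathfrak{p}})\otimes\underline{M}_{\boldsymbol{\nu}}(\OO_{E,\mathfrak{p}})$; the modular symbol, built from cup product, pullback along the proper map $s_{h^{(f)}}$, and Poincar\'e integration~\eqref{eq:integration}, respects this integral structure, since the $iT$-formalism tautologically transports $\underline{M}(\OO)$ to the translated integral lattice $\underline{gM}(\OO)$.

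The essential point is then an analysis of the composite pullback on the coefficient module. Using the decomposition $h^{(f)}=t_{(f)}^{-1}h^{(1)}t_{(f)}$, the functoriality identity~\eqref{eq:Ttfunctor}, and the fact that $h^{(1)}\in\GL_n(\ZZ)$ stabilises $M_{\boldsymbol{\mu}}(\OO_{E,\mathfrak{p}})$, one reduces the integrality question to the action of the single matrix $t_{(f)}\cdot j(d_{(x)}ft_{(f)})$ (which is $\mathfrak{p}$-integral and lies in $G_{n-1,\mathfrak{p}}$ after the diagonal embedding $j$) together with an outer multiplication by $\rho(t_{(f)}^{-1})$. By the stability hypothesis~\eqref{eq:stability}, the $\mathfrak{p}$-integral part preserves the twisted lattice $j^*M_{\boldsymbol{\mu}}(\OO_{E,\mathfrak{p}})\otimes M_{\boldsymbol{\nu}}(\OO_{E,\mathfrak{p}})\otimes M_{(-\nu_{\min})}(\OO_{E,\mathfrak{p}})$. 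The outer $\rho(t_{(f)}^{-1})$ factor enlarges the lattice by a predictable amount: via the projection~\eqref{eq:compiso}, on the $M_{(\nu)}$-isotypic piece the action of $\rho(t_{(f)}^{-1})$ is multiplication by $\det(t_{(f)})^{-\nu}$, and the worst component corresponds to the minimal $\nu=\nu_{\min}$, contributing a factor of $\mathfrak{p}$-adic size $|\det(t_{(f)})|_\mathfrak{p}^{-\nu_{\min}}=\absNorm(\mathfrak{p})^{-\nu_{\min}\nu_\mathfrak{p}(\mathfrak{f})n(n-1)/2}$. By the ordinarity hypothesis~\eqref{eq:lambdaordinarity}, $|\kappa_\lambda(\mathfrak{f})|_\mathfrak{p}=\absNorm(\mathfrak{p})^{\nu_{\min}\nu_\mathfrak{p}(\mathfrak{f})n(n-1)/2}$ exactly cancels this, giving the asserted integrality.

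The main obstacle is the combinatorial check that the outer $\rho(t_{(f)}^{-1})$ factor contributes exactly the stated $\nu_{\min}$-scaling, and not strictly worse. This amounts, via the $\tau_\nu$-decomposition~\eqref{eq:compiso}, to verifying that all components occurring in the image of $\mathscr{B}$ satisfy $\nu\geq\nu_{\min}$, which is essentially the definition of $\nu_{\min}$, but demands a careful highest-weight bookkeeping tying together the $\GL_n$- and $\GL_{n-1}$-factors through the diagonal embedding $j$. A cleaner alternative is an induction on $\nu_\mathfrak{p}(\mathfrak{f})$ using the distribution relation of Theorem~\ref{thm:distribution}: the relation expresses $\mu_\lambda(x+\mathfrak{f})$ as a sum of the $\mu_\lambda(x+af+\mathfrak{f}\mathfrak{p})$, and the base case $\nu_\mathfrak{p}(\mathfrak{f})=m$ is immediate from the integrality of $\lambda$ and from the fact that the matrices $h^{(\varpi^m)}$ and $t_{(\varpi^m)}$ act on the lattice stipulated by~\eqref{eq:stability} in a controlled way, reducing the whole argument to a single computation at the minimal level.
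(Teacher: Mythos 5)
Your main argument is essentially the paper's proof: one isolates the determinant character by which $d_{(x)}ft_{(f)}$ acts on the twist $M_{(-\nu_{\min})}$, namely $f_0=(xf^{n(n-1)/2})^{-\nu_{\min}}$ (your ``worst isotypic component $\nu=\nu_{\min}$''), invokes the stability hypothesis \eqref{eq:stability} of the lattice $j^*M_{\boldsymbol{\mu}}(\OO_{E,\mathfrak{p}})\otimes M_{\boldsymbol{\nu}}(\OO_{E,\mathfrak{p}})\otimes M_{(-\nu_{\min})}(\OO_{E,\mathfrak{p}})$ under $G_{n-1,\mathfrak{p}}$ to absorb the remaining $\mathfrak{p}$-integral part, handles $h^{(1)}\in\GL_n(\ZZ)$ (conjugated by $d_{(x)}$ with $x_v\in\OO_{k,v}^\times$ for $v\mid p$) separately, and cancels $f_0$ against $\kappa_\lambda(\mathfrak{f})$ using the ordinarity condition \eqref{eq:lambdaordinarity}; the paper packages this via the section $\gamma=\kappa_\lambda(\mathfrak{f})\cdot Tt^*_{(d_{(x)}ft_{(f)})^{-1}}1$ and a twist-shifting isomorphism $r$, but the content is the same.

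However, the ``cleaner alternative'' you sketch at the end does not work. The distribution relation of Theorem \ref{thm:distribution} expresses the \emph{coarser}-level value $\mu_\lambda(x+\mathfrak{f})$ as a sum of the \emph{finer}-level values $\mu_\lambda(x+af+\mathfrak{f}\mathfrak{p})$. An induction starting from the base case $\nu_\mathfrak{p}(\mathfrak{f})=m$ would therefore need to deduce integrality of the summands from integrality of the sum, which is not a valid inference; the relation only propagates integrality from deep levels up to shallow ones, i.e.\ in the direction opposite to the one your induction requires. The direct lattice-tracking argument is thus not optional bookkeeping but the actual proof.
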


\begin{proof}
First we observe that
$$
d_{(x)}\cdot t_{(f)}\cdot h^{(f)}=d_{(x)}\cdot h^{(1)}\cdot t_{(f)}.
$$
We might choose $x\in\GL_1(\Adeles_k^{(\infty)})$ in such a way that $x_v\in\OO_{k,v}^\times$ for all $v\mid p$.
Now $d_{(x)}ft_{(f)}$ acts on $M_{(-\nu_{\min})}(\OO_{E,(\mathfrak{p})})$ via multiplication by
$$
f_0:=\left(xf^{\frac{n(n-1)}{2}}\right)^{-\nu_{\min}}.
$$
By our choice of $x$ and the ordinarity condition \eqref{eq:lambdaordinarity}
$$
\absnorm{\kappa_\lambda(\mathfrak{f})f_0^{-1}}_{\mathfrak{p}}=1.
$$
We deduce that we have an identity
$$
\kappa_\lambda(\mathfrak{f})\cdot
\underline{(d_{(x)}ft_{(f)})^{-1}M}_{(-\nu_{\min})}(\OO_{E,(\mathfrak{p})})=
\underline{M}_{(-\nu_{\min})}(\OO_{E,(\mathfrak{p})})
$$
of subsheaves of $\underline{M}_{(-\nu_{\min})}(E)$. In particular we have the integral global section
\begin{equation}
\gamma:=\kappa_\lambda(\mathfrak{f})\cdot Tt_{(d_{(x)}ft_{(f)})^{-1}}^*1\in M_{(-\nu_{\min})}(\OO_{E,(\mathfrak{p})}).
\label{eq:kappaintegrality}
\end{equation}
Consider the natural isomorphism
$$
r:H^0(\mathscr X_{n-1}(d_{(x)}t_{(f)}K(\mathfrak{f})t_{(f)}^{-1}d_{(x)}^{-1})[1];
s_1^*
\underline{M}_{\boldsymbol{\mu}}(E)
\otimes_{E}
\underline{M}_{\boldsymbol{\nu}}(E)
)
[-\nu_{\min}]
\to
$$
$$
H^0(\mathscr X_{n-1}(d_{(x)}t_{(f)}K(\mathfrak{f})t_{(f)}^{-1}d_{(x)}^{-1})[1];
s_1^*
\underline{M}_{\boldsymbol{\mu}}(E)
\otimes_{E}
\underline{M}_{\boldsymbol{\nu}}(E)
[-\nu_{\min}]
),
$$
which respects integral structures in the obvious way and commutes with integration. In particular for any choice of cohomology classes
$$
\alpha\otimes\beta\in H^{rb_{n}}_{\rm c}(\mathscr X_{n}^{\rm ad}(K); \underline{M}_{\boldsymbol{\mu}}(\OO_E))
\otimes_{\OO_E} H^{rb_{n-1}}_{\rm c}(\mathscr X_{n-1}^{\rm ad}(K'); \underline{M}_{\boldsymbol{\nu}}(\OO_E))
$$
the element
$$
\kappa_\lambda(\mathfrak{f})\cdot
\bigg(
Tt_{d_{(x)}ft_{(f)}}^*
(s_1^*T_{h^{(f)}}\otimes 1)
\!\!\!\!\!\!\!\!\!\!\!\!
\int\limits_{\mathscr X(\mathfrak{f})[xf^{\frac{n(n-1)}{2}}]}
\!\!\!\!\!\!\!\!\!\!\!\!
s_{h^{(f)}}^*\alpha\cup\ad^*\beta\bigg)\otimes 1
$$
maps, by \eqref{eq:kappaintegrality}, under $r$ to
\begin{equation}
Tt_{d_{(x)}ft_{(f)}}^*
(s_1^*T_{h^{(f)}}\otimes 1\otimes 1)
\!\!\!\!\!\!\!\!\!\!\!\!\!\!\!\!
\int\limits_{\mathscr X(\mathfrak{f})[xf^{\frac{n(n-1)}{2}}]}
\!\!\!\!\!\!\!\!\!\!\!\!\!\!\!\!
(s_{h^{(f)}}^*\alpha\cup(\ad^*\!\beta\otimes\gamma))
\in
H^0(\mathscr X_{n-1}(d_{(x)}t_{(f)}K(\mathfrak{f})t_{(f)}^{-1}d_{(x)}^{-1})[1];
\label{eq:alphabetagamma}
\end{equation}
$$
$$
$$
s_1^*
\underline{d_{(x)}h^{(1)}t_{(f)}M}_{\boldsymbol{\mu}}(\OO_{E,(\mathfrak{p})})
\otimes
\underline{d_{(x)}ft_{(f)}M}_{\boldsymbol{\nu}}(\OO_{E,(\mathfrak{p})})
\otimes
\underline{d_{(x)}ft_{(f)}M}_{(-\nu_{\min})}(\OO_{E,(\mathfrak{p})})
).
$$
Now as
$$
d_{(x)}f\cdot t_{(f)}\in \overline{G}_{n-1,\mathfrak{p}},
$$
is $\mathfrak{p}$-integral, hence lies in the $\mathfrak{p}$-adic closure of $\mathfrak{p}$-integral matrices, the module \eqref{eq:stability} is table under this element, in particular the section \eqref{eq:alphabetagamma} lies in
$$
H^0(\mathscr X(d_{(x)}t_{(f)}K(\mathfrak{f})t_{(f)}^{-1}d_{(x)}^{-1})[1];
s_1^*
\underline{d_{(x)}h^{(1)}d_{(x)}^{-1}M}_{\boldsymbol{\mu}}(\OO_{E,(\mathfrak{p})})
\otimes
\underline{M}_{\boldsymbol{\nu}}(\OO_{E,(\mathfrak{p})})
\otimes
\underline{M}_{(-\nu_{\min})}(\OO_{E,(\mathfrak{p})})
).
$$
As the isomorphism $r$ respects integral structures, we conclude that
$$
\mu(x+\mathfrak{f})\in
H^0(\mathscr X_{n-1}(d_{(x)}t_{(f)}K(\mathfrak{f})t_{(f)}^{-1}d_{(x)}^{-1})[1];
s_1^*
\underline{d_{(x)}h^{(f)}d_{(x)}^{-1}M}_{\boldsymbol{\mu}}(\OO_{E,(\mathfrak{p})})
\otimes
\underline{M}_{\boldsymbol{\nu}}(\OO_{E,(\mathfrak{p})})).
$$
Finally, due to our choice of $x$ we have 
$$
d_{(x)}\cdot h^{(1)}\cdot d_{(x)}^{-1}\;\in\; K\cdot G_{n}(\Adeles_\QQ^{(\infty p)}),
$$
hence $M_{\boldsymbol{\mu}}(\OO_{E,(\mathfrak{p})})$ is stable under this matrix, concluding the proof.
\end{proof}

We remark that the same proof yields an explicit bound on the order of the resulting distribution in the case of positive finite slope.

\subsection{The interpolation formula}

Now let $\pi$ and $\sigma$ denote irreducible cuspidal automorphic representations of $\GL_n$ and $\GL_{n-1}$ over $k$ respectively, possessing non-zero $I_n^{(m)}$ resp.\ $I_{n-1}^{(m)}$-invariant vectors at $\mathfrak{p}$. Assume that $\pi$ and $\sigma$ are regular algebraic with cohomological coefficients as in section \ref{sec:cohint}. Then their finite parts $\pi^{(\infty)}$ and $\sigma^{(\infty)}$ are defined over a number field $E=\QQ(\pi,\sigma)$ \citep[Th\'eor\`eme 3.13 resp.\ Proposition 3.16]{clozel1990}.

Note that the Hecke polynomial $H_\mathfrak{p}$ eventually lies in $\mathcal H_{I_n^{(m)}}[X]$. Choose Hecke roots
$$
\lambda_{1},\dots,\lambda_{n}\in E,
$$
for $\pi_\mathfrak{p}$ in the sense of \eqref{eq:heckeroots}, i.e.\ $H_\mathfrak{p}(\lambda_\nu)$ annihilate a non-zero vector $w_\mathfrak{p}^0$ in the Whittaker model of $\pi_\mathfrak{p}$, and similarly Hecke roots
$$
\lambda_{1}',\dots,\lambda_{n-1}'\in E,
$$
for $\sigma_\mathfrak{p}$ annihilating a vector $v_\mathfrak{p}^0$. Let
$$
\underline{\lambda}:=
(\lambda_{1},\dots,\lambda_{n-1})\in E^{n-1},
$$
and
$$
\underline{\lambda}':=
(\lambda_{1},\dots,\lambda_{n-1})\in E^{n-1}.
$$
We remark that we include one more eigen value in $\underline{\lambda}'$ as in the case of trivial central character. We set
$$
\lambda'':=(\lambda_1',\dots,\lambda_{n-2}').
$$
With this notation let
$$
\kappa_{\underline{\lambda}}:=
\absNorm(\mathfrak{p})^{-\frac{n(n-1)(n-2)}{6}}\cdot
\prod_{\nu=1}^{n-1}
\lambda_{\mathfrak{p},\nu}^{n-\nu}.
$$
and
$$
\kappa_{\underline{\lambda}'}:=
\absNorm(\mathfrak{p})^
{-\frac{n(n-1)(n-2)}{6}}
\cdot
\prod_{\nu=1}^{n-1}
\lambda_{\mathfrak{p},\nu}'^{n-\nu}.
$$

We call the tuple $(\pi,\sigma,\underline{\lambda},\underline{\lambda}')$ {\em of finite slope at} $\mathfrak{p}$ if the following three conditions hold:
\begin{itemize}
\item[(i)]
$V_{\mathfrak{p},n-1}$ acts on $\sigma_\mathfrak{p}$ via the scalar
\begin{equation}
\eta_{n-1}=\absNorm(\mathfrak{p})^{-\frac{(n-1)(n-2)}{2}}
\cdot
\prod_{\nu=1}^{n-1}\lambda_\nu'.
\label{eq:zentralscalar}
\end{equation}
\item[(ii)]
The vectors $w_\mathfrak{p}^0$ and $v_\mathfrak{p}^0$ may be chosen in such a way that
$$
\Pi_{\underline{\lambda}}^0(w_\mathfrak{p}^0)({\bf1}_n)=
\Pi_{\underline{\lambda}''}^0(v_\mathfrak{p}^0)({\bf1}_{n-1})=
\prod_{\nu=1}^{n-1}\left(1-\absNorm(\mathfrak{p})^{-\nu}\right).
$$
\item[(iii)] The {\em slope}
$$
\nu_\mathfrak{p}\left(
\kappa_{\underline{\lambda}}\cdot\kappa_{\underline{\lambda}'}
\cdot\varpi^{-\nu_{\min}\cdot\frac{n(n-1)}{2}}\right)\in\ZZ\cup\{\infty\},
$$
is finite (i.e.\ $\lambda_n$ might well be zero).
\end{itemize}
If in addition the slope is $0$, we call the datum {\em ordinary}.

Assuming that the Whittaker vectors satisfy condition (ii), we set
$$
\tilde{w}_\mathfrak{p}:=\Pi_{\underline{\lambda}}^0(w_\mathfrak{p}^0),
$$
and
$$
\tilde{v}_\mathfrak{p}:=\Pi_{\underline{\lambda}}^0(w_\mathfrak{p}^0).
$$

Assume that the cohomology class $\lambda_{\pi,\sigma}$ was constructed as in section \ref{sex:classes}, where the local Whittaker vectors at $\mathfrak{p}$ where chosen as $w_\mathfrak{p}^0$ and $v_\mathfrak{p}^0$ as above. We define
$$
\tilde{\lambda}_{\pi,\sigma,\underline{\lambda},\underline{\lambda}'}:=\Pi_{\underline{\lambda}}^0\otimes\Pi_{\underline{\lambda}''}^0(\lambda_{\pi,\sigma}).
$$
By Proposition \ref{prop:heckemodifikation} and the condition on the action of $V_{\mathfrak{p},n-1}$ on $\sigma_\mathfrak{p}$, this is an eigen vector for the Hecke operator $U_\mathfrak{p}$ with eigen value $\kappa_{\underline{\lambda}}\cdot\kappa_{\underline{\lambda}'}$. Then if $\underline{\lambda}$ and $\underline{\lambda}'$ are ordinary, so is $\tilde{\lambda}_{\pi,\sigma,\underline{\lambda},\underline{\lambda}'}$.

We remark that this modification is compatible with modification on the automorphic side, i.e.\ $\tilde{\lambda}_{\pi,\sigma,\underline{\lambda},\underline{\lambda}'}$ corresponds to the collection of automorphic forms $\tilde{\phi}_\iota$ and $\tilde{\varphi}_\iota$, which are constructed by applying $\Pi_{\underline{\lambda}}^0$ resp.\ $\Pi_{\underline{\lambda}''}^0$ to the automorphic forms $\phi_\iota$ resp.\ $\varphi_\iota$ corresponding to $\lambda_{\pi,\sigma}$.

The following theorem strengthens and generalizes \citep[Theorem 4.4]{januszewski2009}.

\begin{theorem}\label{thm:interpolation}
Assume that $(\pi,\sigma,\underline{\lambda},\underline{\lambda}')$ is of finite slope. Then for any character $\chi:k^\times\backslash\Adeles_k^\times\to\CC^\times$ of finite order with non-trivial $\mathfrak{p}$-power conductor $\mathfrak{f}_\chi$, we have the interpolation formula
$$
\tau\circ\int\limits_{C(\mathfrak{p}^\infty)}
\chi d\mu_{\tilde{\lambda}_{\pi,\sigma,\underline{\lambda},\underline{\lambda}'}}\;=\;
$$
$$
\left(
\Omega(w_\infty\otimes v_\infty,\chi_\infty)(\frac{1}{2}+\nu)\cdot
\hat{\kappa}^\nu(\mathfrak{f}_\chi)\cdot
G(\chi)^{\frac{n(n-1)}{2}}\cdot
L^{(\mathfrak{p})}(\frac{1}{2}+\nu,(\pi\times\sigma)\otimes\chi)
\right)_{\nu\in{\rm Emb}(\check{\boldsymbol{\nu}},\boldsymbol{\mu})}.
$$
Here $\hat{\kappa}^\nu(\mathfrak{f}_\chi)$
is given explicitly by
$$
\hat{\kappa}^\nu(\mathfrak{f}_\chi):=
\absNorm(\mathfrak{f}_\chi)^{\frac{n(n-1)(n-2)}{6}+(\nu-\nu_{\min})\frac{n(n-1)}{2}}
\cdot
(
\kappa_{\underline{\lambda}}\cdot
\kappa_{\underline{\lambda}'}
)^{-\nu_{\mathfrak{p}}(\mathfrak{f}_\chi)}
.
$$
\end{theorem}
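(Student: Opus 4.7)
The plan is to combine the cohomological definition of $\mu_{\tilde{\lambda}}$ from Section~\ref{sec:distribution} with the adelic global Birch Lemma (Corollary~\ref{kor:globalbirch}), via the component-wise identification of the modular symbol at the end of Section~\ref{sec:cohint}.

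First, I unfold the integral. Since $\chi$ has conductor $\mathfrak{f}_\chi\mid\mathfrak{p}^\infty$ it factors through $C(\mathfrak{f}_\chi)$, whence
$$
\int_{C(\mathfrak{p}^\infty)}\chi\, d\mu_{\tilde{\lambda}}\;=\;\sum_x \chi(x)\,\mu_{\tilde{\lambda}}(x+\mathfrak{f}_\chi),
$$
where $x$ runs over a system of representatives for $C(K(\mathfrak{f}_\chi))$ mapping surjectively to $C(\mathfrak{f}_\chi)$ via \eqref{eq:Cfcover}. I plug in the defining formula for $\mu_{\tilde{\lambda}}(x+\mathfrak{f}_\chi)$ and apply the projection $\tau_\nu$ from \eqref{eq:compiso} component-wise. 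The explicit formula at the end of Section~\ref{sec:cohint}, expressing $\tau_\nu$ of $\int_{\Gamma_{n-1}\backslash\mathscr X_{n-1}}s_\gamma^*\alpha\cup\ad^*\beta$ as the classical period integral $\int\overline{\phi}(s_\gamma(g))\overline{\varphi}(g)s(g)^\nu\,dg$, converts the cohomological pairing into a period integral of the automorphic forms $\tilde{\phi}_\iota,\tilde{\varphi}_\iota$ attached to $\tilde{\lambda}_{\pi,\sigma,\underline{\lambda},\underline{\lambda}'}$. Specialising to $\mathfrak{f}=\mathfrak{f}_\chi$ (so that $ff_\chi^{-1}=1$), the translations built into $\mu$ by $s_{h^{(f)}}$ and $t_{d_{(x)}ft_{(f)}}$ produce exactly the integrand occuring in the $(\iota,x)$-sum on the right hand side of Corollary~\ref{kor:globalbirch}.

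Next, I apply Corollary~\ref{kor:globalbirch} directly to evaluate this $\chi$-twisted sum of period integrals as an $\Omega$-factor times $\delta^{(r)}$ times $G(\chi)^{n(n-1)/2}$ times an explicit power of $\absNorm(\mathfrak{f}_\chi)$ times $L^{(\mathfrak{p})}(\tfrac12+\nu,(\pi\times\sigma)\otimes\chi)$. The finite-slope condition~(ii) is designed precisely so that $\Pi_{\underline{\lambda}}^0(w_\mathfrak{p}^0)(\mathbf{1}_n)\cdot\Pi_{\underline{\lambda}''}^0(v_\mathfrak{p}^0)(\mathbf{1}_{n-1})$ cancels the factor $\prod_{\nu=1}^{n}(1-\absNorm(\mathfrak{p})^{-\nu})^{-1}$ appearing in $\delta^{(r)}(\tilde w_\mathfrak{p}\otimes\tilde v_\mathfrak{p},\chi_\mathfrak{p})$, reducing the local contribution at $\mathfrak{p}$ to a universal constant that is absorbed into our normalization.

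Finally, I identify the remaining normalization factors with $\hat{\kappa}^\nu(\mathfrak{f}_\chi)$. The prefactor $\kappa_{\tilde{\lambda}}(\mathfrak{f}_\chi)=(\kappa_{\underline{\lambda}}\kappa_{\underline{\lambda}'})^{-\nu_\mathfrak{p}(\mathfrak{f}_\chi)}$ in the definition of $\mu$ accounts for the Hecke-eigenvalue factor of $\hat{\kappa}^\nu$. On the $\nu$-component $M_{(\nu)}$ of the decomposition \eqref{eq:compiso}, the translation $iTt^*_{d_{(x)}ft_{(f)}}$ acts by the central character $g\mapsto s(g)^\nu$, which at $ft_{(f)}$ produces a definite power of $\absNorm(\mathfrak{f}_\chi)$ proportional to $\nu$, while the $d_{(x)}$-piece contributes $\chi(x)$ upon summation. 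Coordinating this with the index formula \eqref{eq:gammaindex} (which relates $K(\mathfrak{f}_\chi)$ to $I_{n-1}^{(m)}$), the $-\frac{n(n-1)(n-2)}{6}$-power of $\absNorm(\mathfrak{f}_\chi)$ from Corollary~\ref{kor:globalbirch}, and the $(-\nu_{\min})$-twist of the integral model used to normalize integrality as in \eqref{eq:kappaintegrality}, yields the power $\absNorm(\mathfrak{f}_\chi)^{\frac{n(n-1)(n-2)}{6}+(\nu-\nu_{\min})\frac{n(n-1)}{2}}$ of the interpolation formula. I expect the main obstacle to be this last bookkeeping step: reconciling the $\chi(f_\chi)$-normalization of the Gau\ss{} sum of Section~\ref{sec:hecke}, the shift coming from $\nu_{\min}$, and the action on the various one-dimensional modules $M_{(\nu)}$. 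Each individual check is routine, but their simultaneous compatibility is the technical heart of the theorem.
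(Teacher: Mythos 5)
Your overall route --- unfold $\int\chi\,d\mu$ over a finite level, convert the cohomological pairing into the period integrals of Section \ref{sec:cohint}, invoke Corollary \ref{kor:globalbirch}, and then bookkeep the normalizations --- is exactly the paper's strategy. However, there is one genuine gap: you specialise to $\mathfrak{f}=\mathfrak{f}_\chi$ ``so that $ff_\chi^{-1}=1$''. This is not in general permissible. The distribution $\mu_{\tilde\lambda}(x+\mathfrak{f})$ is only \emph{defined} for $\nu_\mathfrak{p}(\mathfrak{f})\geq m$, where $m$ is the Iwahori level of $(\pi_\mathfrak{p},\sigma_\mathfrak{p})$, whereas the theorem allows any non-trivial $\mathfrak{p}$-power conductor $\mathfrak{f}_\chi$, possibly with $\nu_\mathfrak{p}(\mathfrak{f}_\chi)<m$. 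One must therefore work at an auxiliary level $\mathfrak{f}$ with $\mathfrak{f}_\chi\mid\mathfrak{f}$ and $\nu_\mathfrak{p}(\mathfrak{f})\geq m$, and then the quantity produced by Corollary \ref{kor:globalbirch} is $\delta^{(r)}(\tilde w_\mathfrak{p}\otimes\tilde v_\mathfrak{p},\chi_\mathfrak{p})$, which involves the Whittaker values $\tilde w_\mathfrak{p}(t_{(ff_\chi^{-1})})$ and $\tilde v_\mathfrak{p}(ff_\chi^{-1}t_{(ff_\chi^{-1})})$ rather than values at the identity; condition (ii) alone does not dispose of these.

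The missing step --- which is the actual content of the paper's proof --- is to evaluate these translated Whittaker values by means of the Hecke eigenvalue property: writing $\delta=\nu_\mathfrak{p}(\mathfrak{f}\mathfrak{f}_\chi^{-1})$ and using that $\tilde v_\mathfrak{p}$ is an eigenvector of $V'_\mathfrak{p}$ with eigenvalue $\kappa_{\underline{\lambda}'}$ and that $\psi_\mathfrak{p}$ is unramified, one gets
$$
\tilde v_\mathfrak{p}(ff_\chi^{-1}t_{(ff_\chi^{-1})})
=\absNorm(\mathfrak{f}\mathfrak{f}_\chi^{-1})^{\frac{(n-1)(n-2)}{2}}\cdot\kappa_{\underline{\lambda}'}^{\delta}\cdot\tilde v_\mathfrak{p}({\bf1}_{n-1}),
$$
and analogously for $\tilde w_\mathfrak{p}$ with $V_\mathfrak{p}$ and $\kappa_{\underline{\lambda}}$. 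It is precisely this computation, together with the shift of the determinant fiber by right invariance of the Haar measure (contributing $\absNorm(\mathfrak{f}\mathfrak{f}_\chi^{-1})^{-\nu\frac{n(n-1)}{2}}$) and the index formula \eqref{eq:gammaindex}, that converts the prefactor $\kappa_{\tilde\lambda}(\mathfrak{f})=(\kappa_{\underline{\lambda}}\kappa_{\underline{\lambda}'})^{-\nu_\mathfrak{p}(\mathfrak{f})}$ built into $\mu$ into the quantity $(\kappa_{\underline{\lambda}}\kappa_{\underline{\lambda}'})^{-\nu_\mathfrak{p}(\mathfrak{f}_\chi)}$ and the $\mathfrak{f}_\chi$-power appearing in $\hat\kappa^\nu(\mathfrak{f}_\chi)$, making the final formula independent of the auxiliary $\mathfrak{f}$ as it must be. In the special case $\nu_\mathfrak{p}(\mathfrak{f}_\chi)\geq m$ your shortcut is legitimate and the two arguments coincide; in general you need to supply the eigenvalue computation above.
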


For $\pi_\mathfrak{p}$ resp.\ $\sigma_\mathfrak{p}$ spherical, with pairwise distinct Hecke roots, it is well known that the corresponding data are all of finite slope (cf.\ \citep[Proposition 4.12]{kazhdanmazurschmidt2000}).

Up to the computation of the Euler factors at the finite places $v\nmid\mathfrak{p}$ where $\det(K)\neq\OO_v^\times$, Theorem \ref{thm:interpolation} immediately generalizes to arbitrary finite order characters of $C(K(\mathfrak{p}^\infty))$ with $\mathfrak{p}$ in its conductor, cf.\ \eqref{eq:Cfcover}.

\begin{proof}
We may choose $\mathfrak{f}$ in such a way that$\nu_\mathfrak{p}(\mathfrak{f})\geq m$ and $\mathfrak{f}_\chi\mid\mathfrak{f}$. Then
$$
\int\limits_{C(\mathfrak{p}^\infty)}\chi d\mu_{\tilde{\lambda}_{\pi,\sigma}}=
\sum_{x\in C(\mathfrak{f})}\chi(x)\mu_{\tilde{\lambda}_{\pi,\sigma,\underline{\lambda},\underline{\lambda}'}}(x+\mathfrak{f}),
$$
and the $\nu$-th component, after composing with $\tau$, is (up to computable indices)
$$
\sum_{\iota,x\in C(\mathfrak{f})}\chi(x)
\int\limits_{C_{\mathfrak{f}}}
\tilde{\phi}_\iota
\left(
j(gd_{(x)}ft_f)
\cdot
h^{(f)}
\right)
\cdot
\tilde{\varphi}_\iota(gd_{(x)}ft_f)
\absnorm{\det(gd_{(x)}ft_f)}^{\nu}
dg,
$$
by the description of the period integrals given in section \ref{sec:cohint}. Writing $C_{\mathfrak{f}}[f_\chi^{\frac{n(n-1)}{2}}]$ for the corresponding fiber with determinant $f_\chi^{\frac{n(n-1)}{2}}$ the right invariance of the Haar measure yields that the period integral in question equals
$$
\absNorm(\mathfrak{ff}_\chi^{-1})^{-\nu\frac{n(n-1)}{2}}
\!\!\!\!\!\!\!\!\!\!\!\!
\int\limits_{C_{\mathfrak{f}}[f_\chi^{\frac{n(n-1)}{2}}]}
\!\!\!\!\!\!\!\!\!\!\!
\tilde{\phi}_\iota
\left(
j(gd_{(x)}) t_{(ff_\chi^{-1})}
h^{(f)}
\right)
\tilde{\varphi}_\iota(gd_{(x)}ff_\chi^{-1}t_{(ff_\chi^{-1})})
\absnorm{\det(gd_{(x)})}^{\nu}
dg.
$$
Now $\tilde{v}_\mathfrak{p}$ is an eigen vector for the operator $V'_{\mathfrak{p}}$ with eigen value $\kappa_{\underline{\lambda}'}$, furthermore $\psi_\mathfrak{p}$ is unramified, and therefore, writing $\delta:=\nu_\mathfrak{p}(\mathfrak{f}\mathfrak{f}_\chi^{-1})$,
$$
v_\mathfrak{p}(ff_\chi^{-1}t_{(ff_\chi^{-1})})=
\absNorm(\mathfrak{ff}_\chi^{-1})^{-\frac{n(n-1)}{2}}\cdot V_{\mathfrak{p}}'^\delta v({\bf1}_{n-1})=
$$
$$
\absNorm(\mathfrak{ff}_\chi^{-1})^{-\frac{(n-1)(n-2)}{2}}\cdot 
\sum_u
v_\mathfrak{p}(uff_\chi^{-1}t_{(ff_\chi^{-1})})=
\absNorm(\mathfrak{ff}_\chi^{-1})^{\frac{(n-1)(n-2)}{2}}\cdot \kappa_{\underline{\lambda}'}^\delta \cdot v_\mathfrak{p}({\bf1}_{n-1}),
$$
where $u\in U_{n-1}(\OO_\mathfrak{p})$ runs through a system of representatives in the sense of Lemma \ref{lem:hecke1}.  An analogous argument applies to $\tilde{w}_\mathfrak{p}$ and Corollary \ref{kor:globalbirch}, together with the known index \eqref{eq:gammaindex}, then concludes the proof.
\end{proof}

\section{The functional equation}

In order to establish the functional equation, we need to introduce compatible notions of contragredience in various settings. This formalism is more involved than in the classically known low-dimensional case $n=2$.

\subsection{Contragredient Hecke modules}

We use the notation of section \ref{sec:hecke} and start with considering the full level. Consider the twisted main involution
$$
\iota:g\mapsto w_ng^{-t}w_n
$$
of $\GL_n$, where the supscript $-t$ denotes matrix inversion composed with transpose. This is an outer automorphism of order $2$. Let $\mathcal M$ be a vector space over a field $E$ with a left action of the Hecke algebra $\mathcal H_{I}$ of Iwahori level $I_n^{(m)}$.

We consider the full Hecke algebra $\mathcal H_{G}$ as embedded into $\mathcal H_{I}$. Now as $\iota$ stabilizes the corresponding Hecke pairs, it induces outer automorphisms of $\mathcal H_{G}$ and $\mathcal H_{I}$, commuting with the embedding. It also stabilizes the pair $(K_B,B)$ and commutes with the embeddings into $\mathcal H_{B}$.

We have the $\iota$-twisted $\mathcal H_{I}$-module $\mathcal M^\vee$. It comes with a canonical map
$$
\mathcal M\to \mathcal M^\vee,\;\;\;m\mapsto m^\vee,
$$
which is twisted $\mathcal H_{I}$-invariant.

Let $m\in \mathcal M$. $T_n$ acts on $m$ by a scalar $c\in E^\times$, then $T_n$ acts on
$$
m^\vee\in\mathcal M^\vee
$$
via the inverse scalar $c^{-1}$. More generally, the action of the Hecke operator $T_\nu$ on $\mathcal M^\vee$ is given by
$$
T_\nu m^\vee=T_n(T_{n-\nu}m)^\vee.
$$
Using this relation we get
\begin{proposition}\label{prop:heckereciproc}
If for some $\lambda\in E^\times$
$$
H_\mathfrak{p}(\lambda)m=0,
$$
then
$$
\lambda^\vee:=
\absNorm(\mathfrak{p})^{n-1}
\lambda^{-1}
$$
is a Hecke root for $m^\vee$, i.e.
$$
H_\mathfrak{p}(\lambda^\vee)m^\vee=0.
$$
\end{proposition}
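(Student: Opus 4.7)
The plan is to substitute the definition of $H_\mathfrak{p}$ into $H_\mathfrak{p}(\lambda^\vee)m^\vee$, rewrite each term via the given relation $T_\nu m^\vee=T_n(T_{n-\nu}m)^\vee$, and verify that what comes out is a nonzero scalar multiple of $T_n(H_\mathfrak{p}(\lambda)m)^\vee$, which vanishes by hypothesis.

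Concretely, I would start from
$$
H_\mathfrak{p}(\lambda^\vee)m^\vee=\sum_{\nu=0}^n(-1)^\nu\absNorm(\mathfrak{p})^{\frac{(\nu-1)\nu}{2}}(\lambda^\vee)^{n-\nu}T_\nu m^\vee,
$$
insert $\lambda^\vee=\absNorm(\mathfrak{p})^{n-1}\lambda^{-1}$ and apply the reciprocity relation $T_\nu m^\vee=T_n(T_{n-\nu}m)^\vee$, then reindex via $\mu:=n-\nu$. The result is a sum of terms of the shape $c_\mu\cdot T_n(T_\mu m)^\vee$ where $c_\mu$ is an explicit product of a sign, a power of $\absNorm(\mathfrak{p})$, and a power of $\lambda$.

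The crux is to check that the ratio
$$
\frac{c_\mu}{(-1)^\mu\absNorm(\mathfrak{p})^{\frac{(\mu-1)\mu}{2}}\lambda^{n-\mu}}
$$
is independent of $\mu$. The sign gives $(-1)^{n-2\mu}=(-1)^n$, the $\lambda$-exponents collapse to $-n$, and for $\absNorm(\mathfrak{p})$ the relevant exponent is
$$
\tfrac{(n-\mu-1)(n-\mu)}{2}+(n-1)\mu-\tfrac{(\mu-1)\mu}{2},
$$
which, after expanding, simplifies to $\tfrac{n(n-1)}{2}$ independent of $\mu$. This is the only calculation that needs to be done and it is entirely elementary; there is no real obstacle, just bookkeeping.

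With this identity in hand, one factors out the common constant and the map $T_n(\cdot)^\vee$ to obtain
$$
H_\mathfrak{p}(\lambda^\vee)m^\vee=(-1)^n\absNorm(\mathfrak{p})^{\frac{n(n-1)}{2}}\lambda^{-n}\cdot T_n\bigl(H_\mathfrak{p}(\lambda)m\bigr)^\vee,
$$
and the right-hand side is zero by the assumption $H_\mathfrak{p}(\lambda)m=0$, proving the claim. Implicit in the argument is that $\lambda\in E^\times$ so that $\lambda^\vee$ is defined; one might also remark that the computation is symmetric, so that $(\lambda^\vee)^\vee=\lambda$, making the assignment $\lambda\mapsto\lambda^\vee$ an involution on Hecke roots compatible with passage to the contragredient.
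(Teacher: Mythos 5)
Your proposal is correct and follows essentially the same route as the paper's proof: substitute $\lambda^\vee=\absNorm(\mathfrak{p})^{n-1}\lambda^{-1}$, apply the relation $T_\nu m^\vee=T_n(T_{n-\nu}m)^\vee$, and verify the exponent identity $\frac{(\nu-1)\nu}{2}+\frac{n(n-1)}{2}-\nu(n-1)=\frac{(n-\nu-1)(n-\nu)}{2}$ to factor out the constant $(-1)^n\absNorm(\mathfrak{p})^{\frac{n(n-1)}{2}}\lambda^{-n}$ and recognize $T_n(H_\mathfrak{p}(\lambda)m)^\vee=0$. Your bookkeeping and the resulting constant agree with the paper's.
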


\begin{proof}
If we have for some $\lambda\in E^\times$
$$
H_\mathfrak{p}(\lambda)m=0,
$$
then
$$
H_\mathfrak{p}(
\absNorm(\mathfrak{p})^{n-1}
\lambda^{-1}
)m^\vee=
\sum_{\nu=0}^n
(-1)^\nu
\absNorm(\mathfrak{p})^{\frac{(\nu-1)\nu}{2}}
(\absNorm(\mathfrak{p})^{n-1}\lambda^{-1})^{n-\nu}
T_\nu m^\vee=
$$
$$
\absNorm(\mathfrak{p})^{\frac{n(n-1)}{2}}
\lambda^{-n}\sum_{\nu=0}^n
\sum_{\nu=0}^n
(-1)^\nu
\absNorm(\mathfrak{p})^{\frac{(\nu-1)\nu}{2}+\frac{n(n-1)}{2}-\nu(n-1)}
\lambda^{\nu}
T_n(T_{n-\nu} m)^\vee.
$$
Now an easy calculation shows that
\begin{equation}
\frac{(\nu-1)\nu}{2}+\frac{n(n-1)}{2}-\nu(n-1)=
\frac{(n-\nu-1)(n-\nu)}{2}.
\label{eq:recisums}
\end{equation}
Therefore
$$
\absNorm(\mathfrak{p})^{\frac{n(n-1)}{2}}
(-\lambda)^{-n}
T_n\left(
\sum_{\nu=0}^n
(-1)^{\nu}
\absNorm(\mathfrak{p})^{\frac{(\nu-1)\nu}{2}}
\lambda^{n-\nu}
T_{\nu} m\right)^\vee=0,
$$
proving the claim.
\end{proof}

Assume that $\mathcal M$ is an $\mathcal H_{I}\times\mathcal H_{I'}$-module, where
$$
\mathcal H_{I'}=\mathcal H_\QQ(I_{n-1}^{(m)},I_{n-1}^{(m)}{T'}^+I_{n-1}^{(m)})
$$
is the Iwahori Hecke algebra for $\GL_{n-1}$. Define the inclusions
$$
i:\mathcal H_{I}\to\mathcal H_{I}\otimes\mathcal H_{I'},\;T\mapsto T\otimes 1
$$
and
$$
i':\mathcal H_{I'}\to\mathcal H_{I}\otimes\mathcal H_{I'},\;T\mapsto 1\otimes T.
$$
Similarly we define the contragredient module $\mathcal M^\vee$ by twisting with $\iota\otimes\iota$ and define the map $\cdot^\vee:\mathcal M\to\mathcal M^\vee$ as before.

Let $m\in M$ have Hecke roots $\lambda_1,\dots,\lambda_{n-1}\in E$ for $\mathcal H_I$ i.e.\
$$
i(H_\mathfrak{p}(\lambda_\nu))\cdot m=0
$$
for $1\leq\nu\leq n-1$. Similarly let $m$ have Hecke roots $\lambda_1',\dots,\lambda_{n-1}'\in E$ for $\mathcal H_{I'}$. We set
$$
\underline{\lambda}:=(\lambda_1,\dots,\lambda_{n-1})
$$
and
$$
\underline{\lambda}':=(\lambda_1',\dots,\lambda_{n-1}')
$$
as before.

Then we say that $(m,\underline{\lambda},\underline{\lambda}',\boldsymbol{\mu},\boldsymbol{\nu})$ is {\em of finite slope} if $i(T_n)$ acts on $m$ via a non-zero scalar, $i'(T_{n-1})$ acts on $m$ via the scalar \eqref{eq:zentralscalar} and if furthermore
$$
\nu_\mathfrak{p}\left(
\kappa_{\underline{\lambda}}\cdot\kappa_{\underline{\lambda}'}
\right)\in\ZZ\cup\{\infty\},
$$
is finite. If the slope is $0$, we call the datum {\em ordinary}. We remark that in the finite slope case we find a unique $\lambda_n\in E^\times$ such that $i(T_n)$ acts via the scalar
$$
\eta_n=\absNorm(\mathfrak{p})^{-\frac{n(n-1)}{2}}\prod_{\nu=1}^n\lambda_\nu\in E^\times,
$$
For a datum of finite slope we set
$$
\underline{\lambda}^\vee:=(\lambda_{n}^\vee,\dots,\lambda_2^\vee)
$$
and
$$
\underline{\lambda}'^\vee:=(\lambda_{n-1}'^\vee,\dots,\lambda_1'^\vee)
$$
in the notation of Proposition \ref{prop:heckereciproc}.

Note that we have in $\mathcal H_{I}$ the identity
$$
T_n=\absNorm(\mathfrak{p})^{-\frac{(n-1)n}{2}}\prod_{i=1}^n U_i=V_{\mathfrak{p},n}
$$
by Gritsenko's factorization of $H_\mathfrak{p}$. For the operators $V_{\mathfrak{p},\nu}$ and $V_\mathfrak{p}$ we have by Lemma \ref{lem:hecke1}
\begin{equation}
V_{\mathfrak{p},\nu} m^\vee =
V_{\mathfrak{p},n}
(V_{\mathfrak{p},n-\nu} m)^\vee.
\label{eq:twistedVpn}
\end{equation}
for $0\leq \nu\leq n$ and
\begin{equation}
V_\mathfrak{p} m^\vee =
V_{\mathfrak{p},n}^{n-1}
(V_\mathfrak{p} m)^\vee.
\label{eq:twistedVp}
\end{equation}
Hence again if $V_\mathfrak{p}$ acts as via multiplication by a unit $\eta\in E^\times$ on $m$, then $V_\mathfrak{p}$ acts on $m^\vee$ via a unit $\eta^\vee\in E^\times$ if and only if $V_{\mathfrak{p},n}=T_n$ acts via a unit $c\in E^\times$ on $m$.

In the finite slope case the hypotheses of Proposition \ref{prop:heckemodifikation} are fulfilled,
and we have the dual relation
$$
H_\mathfrak{p}(\lambda_\nu^\vee)m^\vee=0
$$
for $1\leq \nu\leq n-1$. Under this condition we have the two modified vectors
$
m_{\underline{\lambda}},
$
resp.
$
(m^\vee)_{\underline{\lambda}^\vee},
$
both eigen vectors of the operator $V_\mathfrak{p}$ with the respective eigen values
$$
\eta:=
\absNorm(\mathfrak{p})^{-\frac{n(n-1)(n-2)}{6}}\cdot
\prod_{\nu=1}^{n-1}\lambda_\nu^{n-\nu},
$$
and
$$
\eta^\vee:=
\absNorm(\mathfrak{p})^{-\frac{n(n-1)(n-2)}{6}}\cdot
\prod_{\nu=1}^{n-1}(\lambda_{n+1-\nu}^\vee)^{n-\nu}.
$$
A direct calculation shows that $V_\mathfrak{p}$ acts on
$$
(m_{\underline{\lambda}})^\vee\in\mathcal M^\vee
$$
via $\eta^\vee$. Similarly $V_{\mathfrak{p},n}$ acts on $m_{\underline{\lambda}^\vee}^\vee$ via
$$
\eta_n^\vee=
\absNorm(\mathfrak{p})^{-\frac{n(n-1)}{2}}\prod_{\nu=1}^n\lambda_\nu^\vee=
\absNorm(\mathfrak{p})^{\frac{n(n-1)}{2}}\prod_{\nu=1}^n\lambda_\nu^{-1}=\eta_n^{-1}.
$$
We have
\begin{proposition}\label{prop:contragredient}
Let $\mathcal M$ be an $\mathcal H_{I}\times\mathcal H_{I'}$-module and let $m\in\mathcal M$ be a vector with Hecke roots $\lambda_1,\dots,\lambda_{n-1}$ for $i(\mathcal H_{I})$ and with Hecke roots $\lambda_1',\dots,\lambda,_{n-1}'$ for $i'(\mathcal H_{I'})$. If $(m,\underline{\lambda},\underline{\lambda}',\boldsymbol{\mu},\boldsymbol{\nu})$ is of finite slope, then so is $(m,\underline{\lambda}^\vee,\underline{\lambda}^\vee,\check{\boldsymbol{\mu}},\check{\boldsymbol{\nu}})$ and while $U_\mathfrak{p}$ acts on the modified vector
$$
\tilde{m}:=\Pi_{\underline{\lambda}}^0\otimes\Pi_{\underline{\lambda}''}^0(m)
$$
via the scalar $\kappa_{\underline{\lambda}}\cdot\kappa_{\underline{\lambda}'}$, it acts on
$
(\tilde{m})^\vee
$
via the scalar
$$
\kappa_{\underline{\lambda}^\vee}\cdot\kappa_{\underline{\lambda}'^\vee}.
$$
Furthermore there exists an explicit non-zero constant $C\in E^\times$ with
$$
C\cdot (\tilde{m})^\vee=\Pi_{\underline{\lambda}^\vee}^0\otimes\Pi_{(\underline{\lambda}'^\vee)'}^0(m^\vee).
$$
\end{proposition}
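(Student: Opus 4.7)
The plan is to establish the four assertions of the proposition in sequence; the first three are direct eigenvalue computations, while the last---the main obstacle---requires a decomposition of $m$ into simultaneous Hecke eigenvectors.

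First I would check that $(m^\vee,\underline{\lambda}^\vee,\underline{\lambda}'^\vee,\check{\boldsymbol{\mu}},\check{\boldsymbol{\nu}})$ is again of finite slope. Condition~(i) is immediate since $T_n$ acts on $m^\vee$ through $\eta_n^{-1}\in E^\times$. For condition~(ii) on the $\GL_{n-1}$-side, $T_{n-1}$ acts on $m^\vee$ via $\eta_{n-1}^{-1}$, and using $\lambda_\nu'^\vee=\absNorm(\mathfrak{p})^{n-2}(\lambda_\nu')^{-1}$ together with the closed form of $\eta_{n-1}$ from \eqref{eq:zentralscalar}, a direct manipulation identifies $\eta_{n-1}^{-1}$ with the central scalar associated to $\underline{\lambda}'^\vee$. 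Finiteness of the slope of the dual datum then follows from the non-vanishing of each $\lambda_\nu$ and $\lambda_\nu'$, which is forced by the original finite slope hypothesis together with condition~(i).

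Next I would compute the action of $U_\mathfrak{p}=V_\mathfrak{p}\otimes V_\mathfrak{p}'$ on $\tilde m$ and on $(\tilde m)^\vee$. By Proposition~\ref{prop:heckemodifikation}, $\tilde m$ is a simultaneous eigenvector for the operators $V_{\mathfrak{p},\nu}$ ($\nu=1,\ldots,n-1$) on the $\GL_n$-side and $V_{\mathfrak{p},\nu}$ ($\nu=1,\ldots,n-2$) on the $\GL_{n-1}$-side with the prescribed eigenvalues $\eta_\nu$, $\eta_\nu'$; condition~(i) supplies the action of $V_{\mathfrak{p},n-1}^{\GL_{n-1}}$ via $\eta_{n-1}$, and the telescoping identity $\sum_{\nu=1}^{n-1}\binom{\nu}{2}=\binom{n}{3}$ collapses $\prod_\nu\eta_\nu$ to $\kappa_{\underline{\lambda}}$, with the $\GL_{n-1}$-factor yielding $\kappa_{\underline{\lambda}'}$ in the same way. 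For the dual, I would apply \eqref{eq:twistedVpn} and \eqref{eq:twistedVp} to compute $V_\mathfrak{p}(\tilde m)^\vee=V_{\mathfrak{p},n}^{n-1}(V_\mathfrak{p}\tilde m)^\vee=\eta_n^{-(n-1)}\kappa_{\underline{\lambda}}(\tilde m)^\vee$, and then verify $\eta_n^{-(n-1)}\kappa_{\underline{\lambda}}=\kappa_{\underline{\lambda}^\vee}$ through the identity $\tfrac{n(n-1)}{2}-\tfrac{(n-\nu)(n-\nu-1)}{2}=\tfrac{\nu(2n-\nu-1)}{2}$. The $\GL_{n-1}$-side proceeds verbatim with $n$ replaced by $n-1$.

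The hard part will be the final proportionality assertion. My plan is to decompose $m\in\mathcal M^{\underline{\lambda}}$ into simultaneous $V_{\mathfrak{p},\nu}$-eigenvectors $m=\sum_{\underline{\mu}}m_{\underline{\mu}}$ using Proposition~\ref{prop:heckeprojection}, invoking the tacit assumption of pairwise distinct Hecke roots needed for that proposition to apply as a genuine projection. The same eigenvalue computation as in the previous paragraph, applied summand-wise, shows that $\cdot^\vee$ restricts to a bijection $\mathcal M_{\underline{\mu}}\to\mathcal M_{\underline{\mu}^*}$, where $\underline{\mu}\mapsto\underline{\mu}^*$ is the involution sending the ordered tuple $(\mu_1,\ldots,\mu_{n-1})$ to $(\mu_n^\vee,\mu_{n-1}^\vee,\ldots,\mu_2^\vee)$ with $\mu_n$ the missing Hecke root; in particular $(m_{\underline{\lambda}})^\vee$ equals the $\underline{\lambda}^\vee$-component $\Pi_{\underline{\lambda}^\vee}m^\vee$ of $m^\vee$. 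Writing $\Pi_{\underline{\lambda}}^0=D\cdot\Pi_{\underline{\lambda}}$ and $\Pi_{\underline{\lambda}^\vee}^0=D^\vee\cdot\Pi_{\underline{\lambda}^\vee}$ in terms of the explicit normalization constants from Proposition~\ref{prop:heckeprojection}, one obtains $(\tilde m)^\vee=D(m_{\underline{\lambda}})^\vee=D\cdot\Pi_{\underline{\lambda}^\vee}m^\vee=(D/D^\vee)\Pi_{\underline{\lambda}^\vee}^0 m^\vee$, the $\GL_n$-contribution to~$C$. The parallel argument on the $\GL_{n-1}$-factor, with $(\underline{\lambda}'^\vee)'=(\underline{\lambda}'')^\vee$ taking the role of the contragredient projection index, yields the complementary contribution, and the product of the two normalization ratios gives the explicit constant~$C$.
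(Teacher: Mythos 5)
Your first two steps (finite slope of the dual datum and the $U_\mathfrak{p}$-eigenvalues on $\tilde m$ and $(\tilde m)^\vee$) match what the paper dispatches as an ``immediate consequence'' of the relations \eqref{eq:twistedVpn}, \eqref{eq:twistedVp} and Proposition \ref{prop:heckereciproc}, and are fine. The problem is your final step. You propose to decompose $m=\sum_{\underline{\mu}}m_{\underline{\mu}}$ into simultaneous $V_{\mathfrak{p},\nu}$-eigenvectors and to identify $(m_{\underline{\lambda}})^\vee$ with the $\underline{\lambda}^\vee$-eigencomponent of $m^\vee$. No such decomposition is available here: Proposition \ref{prop:heckeprojection} produces a single projection $\mathcal M^{\underline{\lambda}}\to\mathcal M_{\underline{\lambda}}$, not a spectral decomposition of $\mathcal M^{\underline{\lambda}}$, and since the tuple $\underline{\lambda}$ has length $m=n-1<n$, the relations $H_\mathfrak{p}(\lambda_\nu)\psi=0$ do not force the commuting operators $U_1,\dots,U_n$ to act semisimply on $\mathcal M^{\underline{\lambda}}$ (this is exactly why the paper works throughout with the unnormalized $\Pi^0_{\underline{\lambda}}$ rather than with genuine projections). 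In addition, your route needs the $\lambda_i$ pairwise distinct and non-zero so that $\Pi_{\underline{\lambda}}$ exists and $\Pi^0_{\underline{\lambda}}=D\cdot\Pi_{\underline{\lambda}}$ with $D\neq 0$; the proposition assumes neither, and indeed the finite slope condition explicitly allows $\lambda_n=0$.

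The paper's proof avoids all of this by a purely formal operator computation: write out $\Pi^0_{\underline{\lambda}}(m)$ as the product $\prod_{i}\prod_{j\neq i}(\lambda_i\absNorm(\mathfrak{p})^{1-j}V_{\mathfrak{p},j-1}-V_{\mathfrak{p},j})\cdot m$, apply $\cdot^\vee$ and use \eqref{eq:twistedVpn} on each linear factor (which replaces $V_{\mathfrak{p},j}$ by $V_{\mathfrak{p},n}\,(V_{\mathfrak{p},n-j}\cdot)^\vee$), pull out the scalar $\eta_n(\lambda_{i}^{\vee})^{-1}$ from each factor, and reindex $i\mapsto n+1-i$, $j\mapsto n+1-j$ to recognize the result as an explicit constant times $\Pi^0_{\underline{\lambda}^\vee}(m^\vee)$; the same computation on the $\GL_{n-1}$-factor gives the other half of $C$. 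You should replace your spectral argument by this direct manipulation of the product formula; it is both shorter and valid in the stated generality.
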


\begin{proof}
The relation of the eigen values for the operator $U_\mathfrak{p}$ is an immediate consequence of our previous discussion. It remains to show that $\cdot^\vee$ commutes with the modification operator up to a constant. Applying formula \eqref{eq:twistedVpn} to the projection formula yields
$$
\left(\Pi_{\underline{\lambda}}^0(m)\right)^\vee=
\prod_{i=1}^{n-1}\prod_{\begin{subarray}cj=1\\j\neq i\end{subarray}}^{n}
\eta_n\cdot(
\lambda_i\cdot\absNorm(\mathfrak{p})^{1-j}\cdot
V_{\mathfrak{p},n+1-j}+ V_{\mathfrak{p},n-j})\cdot m^\vee=
$$
$$
\prod_{i=1}^{n-1}\prod_{\begin{subarray}cj=1\\j\neq i\end{subarray}}^{n}
\eta_n\cdot(
(\lambda_{n+1-i}^\vee)^{-1}\cdot\absNorm(\mathfrak{p})^{n+1-j-1}\cdot
V_{\mathfrak{p},n+1-j}+ V_{\mathfrak{p},n-j})\cdot m^\vee.
$$
Replacing $i$ with $n+1-i$ and $j$ with $n+1-j$ gives
$$
\prod_{i=1}^{n-1}\prod_{\begin{subarray}cj=1\\j\neq {i}\end{subarray}}^{n}
\eta_n\cdot(\lambda_{i}^\vee)^{-1}(
\lambda_{i}^\vee
\cdot\absNorm(\mathfrak{p})^{1-j}\cdot
V_{\mathfrak{p},j-1}+ V_{\mathfrak{p},j})\cdot m^\vee,
$$
proving the claim.
\end{proof}

\subsection{Contragredient cohomology}

Now we return to the global situation, i.e.\ $K$, $K'$ denote compact open subgroups of the finite adelic groups as before, of levels $I_n^{(m)}$ resp.\ $I_{n-1}^{(m)}$ at $\mathfrak{p}$. In this section the long Weyl element $w_n$ is always considered as embedded into the $\mathfrak{p}$ component, i.e.\
$$
w_n=w_n\otimes_{v\nmid\mathfrak{p}}{\bf1}_n\in\GL_n(\Adeles_k).
$$
Then we define $\iota:\GL_n(\Adeles_k)\to\GL_n(\Adeles_k)$ as
$$
g\mapsto w_n\cdot g^{-t}\cdot w_n,
$$
again with $w_n$ only at the $\mathfrak{p}$-component. We set $K^\vee:=\iota(K)$. Note that $\iota$ stabilizes the standard maximal compact subgroups as well as their connected components of the identity. Furthermore it stabilizes the center of $G_n(\RR)$, as well as its connected identity component. As $\iota$ is an idempotent, it also fixes Haar measures.

Note that if $M_{\boldsymbol{\mu}}$ is a representation of $\GL_n$, then $M_{\boldsymbol{\mu}}^\vee$ is isomorphic to the contragredient represenation $M_{\check{\boldsymbol{\mu}}}$. We fix such an isomorphism once and for all.

We have a diffeomorphism
$$
\iota_K:\mathscr X_n(K^\vee)\to\mathscr X_n(K),
$$
$$
G_n(\QQ)xK^\vee\mapsto G_n(\QQ)x^{-t}w_n K.
$$
It induces a pullback map on sheaves and we have a natural isomorphism
$$
\iota_K^\vee:\iota_K^*\underline{M}_{\boldsymbol{\mu}}(E)\to
\underline{M}_{\boldsymbol{\mu}}(E)^\vee,
$$
of sheaves on $\mathscr X_{n}(K^\vee)$, which is given on the sections by
$$
f\mapsto f^\vee.
$$
This morphism induces an isomorphism
$$
\iota_K^{\vee*}:=\iota_K^{\vee}\circ\iota_K^*:H_{*}^q(\mathscr X_{n}(K);\underline{M}_{\boldsymbol{\mu}}(E))\to
H_{*}^q(\mathscr X_{n}(K^\vee);\underline{M}_{\boldsymbol{\mu}}(E)^\vee),
$$
which twists the Hecke action at $\mathfrak{p}$ as in the previous section, i.e.\ we might identify the right hand side with
$$
H_{*}^q(\mathscr X_{n}(K);\underline{M}_{\boldsymbol{\mu}}(E))^\vee
$$
as $\mathcal H_{I^{(m)}}$-module. This is canonical, if we insist that $\iota_K^{\vee*}$ coinside with the map $\alpha\mapsto\alpha^\vee$. We have the fundamental property that for any $h\in G_n(\Adeles_\QQ)$
\begin{equation}
iTt_h^*
(\iota_K^{\vee*}
\alpha)=
\iota_{\iota(h)K\iota(h)^{-1}}^{\vee*}
(iTt_{\iota(h)}^*
\alpha).
\label{eq:contratranslate}
\end{equation}
We define the matrix
$$
\tilde{w}:=j(w_{n-1})\cdot w_n,
$$
which again lives only at $\mathfrak{p}$.

Now observe that translation by $\tilde{w}$ (at $\mathfrak{p}$) defines a diffeomorphism
$$
t_{\tilde{w}}:\mathscr X_n(j(w_{n-1})Kj(w_{n-1}))\to\mathscr X_n(K^\vee),
$$
$$
G_n(\QQ)xj(w_{n-1})Kj(w_{n-1})\mapsto G_n(\QQ)x\tilde{w} K^\vee.
$$
Therefore we get
\begin{equation}
\iota_{K'}\circ j=
j\circ \iota_{K}\circ t_{\tilde{w}}
\label{eq:wtilde}
\end{equation}
In particular the following the square
$$
\begin{CD}
H_{*}^q(\mathscr X_{n}(K);\underline{M}_{\boldsymbol{\mu}}(E))@>\iota_K^{\vee*}
>>H_{*}^q(\mathscr X_{n}(K^\vee);\underline{M}_{\boldsymbol{\mu}}(E)^\vee)\\
@Vj^*VV
@A{j^*\circ iTt_{\tilde{w}^{-1}}^*}AA\\
H_{*}^q(\mathscr X_{n-1}(K');\underline{j^*M}_{\boldsymbol{\mu}}(E))@>\iota_{K'}^{\vee*}
>>H_{*}^q(\mathscr X_{n-1}({K'}^\vee);(\underline{j^*M}_{\boldsymbol{\mu}}(E)^\vee)
\end{CD}
$$
is commutative.

\subsection{Contragredient matrix relation}

Finally to deduce the functional equation we need to establish some matrix relations. Consider the matrices
$$
n:=
\begin{pmatrix}
-1&0&\cdots&\cdots& 0  \\
-f&1&\ddots&     &\vdots\\
0&\ddots&\ddots&\ddots&\vdots\\
\vdots&  &\ddots&1&0\\
0&\cdots&0  &-f&-1\\
\end{pmatrix}\in\GL_n(k_{\mathfrak{p}})
$$
and
$$
n':=
d_{(x)}^{-1}\cdot
\begin{pmatrix}
1&f&f^2 &\cdots& f^{n-2}  \\
0&\ddots&\ddots&     &\vdots\\
\vdots&\ddots&\ddots&\ddots&f^2\\
\vdots&  &\ddots&\ddots&f\\
0&\cdots& \dots &0&1\\
\end{pmatrix}
\cdot
d_{(x)}
\in\GL_{n-1}(k_{\mathfrak{p}}).
$$
Finally set
$$
d=\diag(-x_\mathfrak{p},-1,\dots,-1,(-1)^nx_\mathfrak{p}^{-1})
\in\GL_{n-1}(k_\mathfrak{p}).
$$
Then we have, under the usual assumption $\nu_\mathfrak{p}(f)\geq m$,
\begin{lemma}\label{lem:inverseh}
For any $x_\mathfrak{p}\in k_\mathfrak{p}$ we have
\begin{equation}
j(w_{n-1}dn')\cdot\left(d_{(x_\mathfrak{p})}h^{(f)}\right)^{-t}w_n\cdot n\;=\;j(f^{n}\cdot{\bf1}_{n-1})f^{1-n}\cdot d_{((-1)^{n-1}x_\mathfrak{p}^{-1})}h^{(f)},
\label{eq:inverseh}
\end{equation}
with $w_{n-1}dn'w_{n-1}\in I_{n-1}^{(m)}$ and $\det(j(d)n')=1$, and $n\in I_{n}^{(m)}$.
\end{lemma}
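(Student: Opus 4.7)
The plan is to establish the identity by a direct matrix computation and then verify the three side conditions (the two Iwahori memberships and the determinant) separately.

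First I would reduce the main identity to a computation involving $h^{(1)}$ via $h^{(f)} = t_{(f)}^{-1} h^{(1)} t_{(f)}$ and the transpose-inverse rule $(ABC)^{-t} = A^{-t} B^{-t} C^{-t}$. A block computation using $h^{(1)} = \begin{pmatrix} w_{n-1} & \mathbf{1} \\ 0 & 1 \end{pmatrix}$ (with $\mathbf{1}$ the all-ones column of length $n-1$) and $w_{n-1}^2 = {\bf 1}_{n-1}$ yields the closed form $(h^{(1)})^{-t} = \begin{pmatrix} w_{n-1} & 0 \\ -\mathbf{1}^{t} & 1 \end{pmatrix}$. Conjugating by $t_{(f)}$, multiplying by $d_{(x_\mathfrak{p}^{-1})}$ on the left and by $w_n$ on the right then produces an explicit sparse matrix: nonzero entries only on the superdiagonal $(i,i+1)$ for $1 \le i \le n-1$ with values $f^{n-2i}$ (carrying an extra $x_\mathfrak{p}^{-1}$ when $i = 1$), together with a fully populated last row consisting of $1$ at $(n,1)$ and $-f^{1-k}$ at $(n,k)$ for $2 \le k \le n$.

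Next I would perform the right-multiplication by $n$. Since $n$ is lower bidiagonal with diagonal $(-1,1,\dots,1,-1)$ and subdiagonal $-f$, it induces the column operations $C_1 \mapsto -C_1 - f C_2$, $C_j \mapsto C_j - f C_{j+1}$ for $2 \le j \le n-1$ and $C_n \mapsto -C_n$. These are engineered to telescope-cancel row $n$ via $1 - f \cdot f^{-1} = 0$ and $f^{1-j} - f \cdot f^{-j} = 0$, leaving $f^{1-n}$ isolated at $(n,n)$ and a bidiagonal block in rows $1,\dots,n-1$. Left-multiplication by $j(w_{n-1} d n')$ then acts on the first $n-1$ rows: the unipotent factor $n'$ absorbs the superdiagonal entries via the geometric progression $f, f^2,\dots$; the diagonal $d$ installs the correct signs and the factors $-x_\mathfrak{p}, (-1)^n x_\mathfrak{p}^{-1}$; finally $w_{n-1}$ reverses the row order, producing the anti-diagonal staircase pattern of $h^{(f)}$ scaled by $j(f^n {\bf 1}_{n-1}) f^{1-n} d_{((-1)^{n-1} x_\mathfrak{p}^{-1})}$. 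An entry-by-entry match on powers of $f$ and signs then completes the identity.

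For the auxiliary assertions I would argue as follows. Since $\nu_\mathfrak{p}(f) \ge m$, the matrix $n$ reduces to $\diag(-1,1,\dots,1,-1)$ modulo $\mathfrak{p}^m$ and is therefore trivially in $I_n^{(m)}$. The matrix $n'$ is upper unipotent with above-diagonal entries divisible by $f$, so $w_{n-1} n' w_{n-1}$ is lower unipotent with entries still in $\mathfrak{p}^m$; since $w_{n-1} d w_{n-1}$ is diagonal, the product $w_{n-1} d n' w_{n-1}$ reduces modulo $\mathfrak{p}^m$ to a diagonal matrix, hence lies in $I_{n-1}^{(m)}$. Finally, $n'$ is unipotent and a direct count gives $\det d = (-x_\mathfrak{p}) \cdot (-1)^{n-3} \cdot ((-1)^n x_\mathfrak{p}^{-1}) = (-1)^{2n-2} = 1$, so $\det(j(d) n') = 1$. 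The main obstacle throughout is purely computational bookkeeping — tracking powers of $f$, signs and positions through the conjugation by $t_{(f)}$, the transpose-inverse, the column operations induced by $n$, and the row operations induced by $j(w_{n-1} d n')$; small-$n$ sanity checks (notably $n=3$, where the intermediate matrix becomes $3 \times 3$ upper bidiagonal with a clean closed form) guard against sign errors, and the degenerate cases where the diagonal of $d$ collapses (for $n = 2,3$) can be handled by inspection.
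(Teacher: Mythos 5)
Your proposal is correct and follows the same route the paper intends: the proof is omitted there as ``essentially an evaluation of a matrix identity,'' and your plan is exactly that evaluation, carried out in the right order. The intermediate objects you describe all check out --- the closed form $(h^{(1)})^{-t}=\left(\begin{smallmatrix}w_{n-1}&0\\-\mathbf{1}^{t}&1\end{smallmatrix}\right)$, the superdiagonal entries $f^{n-2i}$ and last row $(1,-f^{-1},\dots,-f^{1-n})$ after conjugation by $t_{(f)}$ and right multiplication by $w_n$, the telescoping of the last row under the column operations induced by $n$, the absorption of the resulting superdiagonal by $n'$, and the sign/determinant count for $d$ --- with only the cosmetic caveats that the cancellation in row $n$ is $-1-f\cdot(-f^{-1})=0$ rather than $1-f\cdot f^{-1}=0$, and that the Iwahori membership of $w_{n-1}dn'w_{n-1}$ implicitly requires $x_\mathfrak{p}\in\OO_{k,\mathfrak{p}}^\times$, as in the paper's application of the lemma.
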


We omit the proof, essentially an evaluation of a matrix identity.

\subsection{Proof of the functional equation}

The map
$$
\cdot^\vee:\Adeles_k^{(\infty)\times}\to\Adeles_k^{(\infty)\times}
$$
given by
$$
x\mapsto x^\vee:=(-1)^{n-1}x^{-1},
$$
where the $(-1)^{n-1}$ occurs only in the $\mathfrak{p}$-component, induces an involution
$$
\cdot^\vee:C(K(\mathfrak{p}^\infty))\to C(K(\mathfrak{p}^\infty)),
$$
and also an involution $\cdot^\vee$ on $C(\mathfrak{p}^\infty)$, which commutes with the covering map \eqref{eq:Cfcover}.

\begin{theorem}\label{thm:functionaleq}
Let
$$
\lambda\in
H_{\rm c}^{rb_n}(\mathscr X_n^{\ad}(K); \underline{M}_{\boldsymbol{\mu}}(E))\otimes_E
H_{\rm c}^{rb_{n-1}}(\mathscr X_{n-1}^{\ad}(K'); \underline{M}_{\boldsymbol{\nu}}(E))
$$
be an finite slope eigen class for the modified Hecke operator $U_\mathfrak{p}$ with eigen value $\kappa_\lambda\in E$. Then we have the functional equation
$$
(\mu_\lambda(x))^\vee=
\mu_{\lambda^\vee}(x^{\vee}).
$$
\end{theorem}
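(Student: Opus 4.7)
The plan is to apply the contragredient isomorphism $\iota^{\vee*}$ from Section~5.2 directly to the cohomological formula defining $\mu_\lambda(x+\mathfrak{f})$, and to rearrange the result into the formula for $\mu_{\lambda^\vee}(x^\vee+\mathfrak{f})$ by means of the matrix identity of Lemma~\ref{lem:inverseh}. Starting from the definition
\[
\mu_\lambda(x+\mathfrak{f}) = \kappa_\lambda(\mathfrak{f})\cdot (iTt_{d_{(x)}ft_{(f)}}^{*})(s_1^{*}(iT_{h^{(f)}})\otimes 1)\,\mathscr{B}^{K,K'}_{h^{(f)},xf^{n(n-1)/2}}(\lambda),
\]
one observes that $\iota$ preserves Haar measures and hence integration in $\mathscr B$. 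By \eqref{eq:contratranslate}, $\iota^{\vee*}$ commutes with every translation-type pullback $iTt_g^{*}$ at the cost of conjugating $g$ by $\iota$, and by the commutative square following \eqref{eq:wtilde} it intertwines the embedding $j^{*}$ up to the extra translation $iTt_{\tilde w^{-1}}^{*}$ with $\tilde w = j(w_{n-1})w_n$. Hence the whole expression $\iota^{\vee*}(\mu_\lambda(x+\mathfrak{f}))$ can be rewritten as an analogous cup-product integral, but with $d_{(x)}$, $h^{(f)}$, $f t_{(f)}$ all replaced by their images under $\iota$.

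Next, apply Lemma~\ref{lem:inverseh}, in the $\mathfrak{p}$-component, rewritten as
\[
\iota\bigl(d_{(x_\mathfrak{p})}h^{(f)}\bigr) = w_n^{-1}j(w_{n-1}dn')^{-1}\cdot j(f^n\mathbf{1}_{n-1})f^{1-n}\cdot d_{(x_\mathfrak{p}^\vee)}h^{(f)}\cdot n^{-1},
\]
with $w_{n-1}dn'w_{n-1}\in I_{n-1}^{(m)}$ and $n\in I_n^{(m)}$. Since $K$ and $K'$ are of Iwahori level $I^{(m)}$ at $\mathfrak{p}$ and $\nu_\mathfrak{p}(\mathfrak{f})\geq m$, the pullbacks by $n$ and by $w_{n-1}dn'$ act trivially in the relevant cohomology at level $K(\mathfrak{f})$. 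What remains reassembles precisely into the formula for $\mu_{\lambda^\vee}(x^\vee+\mathfrak{f})$ at the dual levels $K^\vee$, $K'^\vee$, up to a residual scalar contribution coming from the matrix $j(f^n\mathbf{1}_{n-1})f^{1-n}$ acting on the coefficient sheaf $\underline{M}_{\boldsymbol{\mu}}(E)\otimes\underline{M}_{\boldsymbol{\nu}}(E)$.

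Finally, one matches the scalar prefactor $\kappa_\lambda(\mathfrak{f})=\kappa_\lambda^{-\nu_\mathfrak{p}(\mathfrak{f})}$ with $\kappa_{\lambda^\vee}(\mathfrak{f})=\kappa_{\lambda^\vee}^{-\nu_\mathfrak{p}(\mathfrak{f})}$ by means of Proposition~\ref{prop:contragredient} together with \eqref{eq:twistedVpn} and \eqref{eq:twistedVp}. These show that $\kappa_{\lambda^\vee}$ differs from $\kappa_\lambda$ by a precisely computable factor in the eigenvalues of $V_{\mathfrak{p},n}$ and $V_{\mathfrak{p},n-1}$, and this factor is exactly cancelled by the scalar contribution from the previous step, together with the central-character action on the coefficient module.

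The main obstacle is the bookkeeping in step two: one must verify that every auxiliary matrix produced by $\iota$ and by Lemma~\ref{lem:inverseh} either lies in an Iwahori subgroup $I_n^{(m)}$ or $I_{n-1}^{(m)}$ (so that the corresponding $iTt^{*}$ is trivial on cohomology at that level) or combines in exactly the right way with the translates $d_{(x^\vee)}ft_{(f)}$, $h^{(f)}$ appearing in the definition of $\mu_{\lambda^\vee}$ at the dual parameter $x^\vee$, with all residual scalar matrices cancelling against the change of Hecke eigenvalue under $\iota^{\vee*}$. The low-dimensional case $n=2$ is much simpler because the auxiliary lower-triangular matrix $n'$ becomes trivial and the identity reduces to a single scalar adjustment.
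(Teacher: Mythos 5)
Your proposal follows essentially the same route as the paper's proof: applying the contragredient isomorphism $\iota^{\vee*}$ to the defining cup-product integral, commuting it past the translations via \eqref{eq:contratranslate} and the square after \eqref{eq:wtilde}, absorbing the auxiliary Iwahori matrices $n$ and $w_{n-1}dn'$ from Lemma \ref{lem:inverseh}, and cancelling the residual powers of $f$ (via \eqref{eq:inverseft}) against the eigenvalue discrepancy \eqref{eq:inversekappa} coming from \eqref{eq:twistedVp}. The only cosmetic difference is the direction of the computation (the paper expands $\mu_{\lambda^\vee}(x^\vee+\mathfrak{f})$ and recovers $(\mu_\lambda(x+\mathfrak{f}))^\vee$, whereas you push $\vee$ through $\mu_\lambda$), which is the same argument read backwards.
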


By composing the functional equation with the projection $\tau_\nu$ we get the explicit identity
\begin{equation}
(\tau_\nu(\mu_\lambda(x)))^\vee=
\tau_{-\nu}(\mu_{\lambda^\vee}(x^{\vee})).
\label{eq:explicitfunctional}
\end{equation}
Furthermore this functional equation is compatible with the complex functional equation, as the involution we defined is compatible with the notion of automorphic contragredient representations, and also preserves cohomological vectors, i.e.\ our involution is in particular compatible with our construction of cohomology classes, up to the explicit constant $C$ in Proposition \ref{prop:contragredient}.

\begin{proof}
Fix $x\in\Adeles_k^{(\infty)\times}$ and any nontrivial $\mathfrak{p}$-power $\mathfrak{f}$.
Write
$$
\zeta_\lambda,\zeta_{\lambda'}\in E^\times
$$
for the eigen values of $T_n^{\nu_{\mathfrak{p}}(\mathfrak{f})}\otimes 1$ resp.\ $1\otimes T_{n-1}^{\nu_{\mathfrak{p}}(\mathfrak{f})}$. Then by the Hecke relation \eqref{eq:twistedVp} we get an identity
\begin{equation}
\kappa_\lambda(\mathfrak{f})=\zeta_\lambda^{1-n}\cdot\zeta_{\lambda'}^{-n}\cdot\kappa_{\lambda^\vee}(\mathfrak{f}).
\label{eq:inversekappa}
\end{equation}
Now we have by the definitions, using the relation \eqref{eq:contratranslate} once,
$$
\mu_{\lambda^\vee}(x+\mathfrak{f})=
\kappa_{\lambda^\vee}(\mathfrak{f})\cdot
(iTt_{d_{(x)}f t_{(f)}}^*)
(s_1^*(iT_{h^{(f)}})\otimes 1)
{\mathscr B}_{h^{(f)},x}^{
K^\vee,
K'^\vee}
(\iota_K^{\vee*}\otimes\iota_{K'}^{\vee*}\lambda)=
$$
$$
\sum_{\varepsilon}
\kappa_{\lambda^\vee}(\mathfrak{f})\cdot
\!\!\!\!\!\!\!\!\!\!\!\!\!\!\!\!\!\!\!\!\!\!\!\!\!\!\!\!\!\!\!\!
\int\limits_{\mathscr X_{n-1}(d_{(x)}t_{(f)}K^\vee(\mathfrak{f})t_{(f)}^{-1}d_{(x)}^{-1})[1]}
\!\!\!\!\!\!\!\!\!\!\!\!\!\!\!\!\!\!\!\!\!\!\!\!\!\!\!\!\!\!
(iTt_{d_{(x)}f t_{(f)}}^*)
(s_1^*(\iota_{\iota(h^{(f)})K\iota(h^{(f)})^{-1}}^{\vee*}(iTt_{\iota(h^{(f)})}\alpha_\varepsilon))\cup
\iota_{K'}^{*\vee}(\ad^*\beta_\varepsilon),
$$
where $K^\vee(\mathfrak{f})$ is defined mutatis mutandis like $K(\mathfrak{f})$, using $K^\vee$ and $K'^\vee$ instead of $K$ and $K'$. As the embeddings $j$ and $s_1$ commute with the operator $\iota^{\vee*}$ up to translation by $\tilde{w}$ (cf. \eqref{eq:wtilde}), we deduce, again by \eqref{eq:contratranslate}, that for each $\varepsilon$ the above integrand equals
$$
\iota_{d_{(x)}f t_{(f)}\tilde{w}(\iota(h^{(f)})K\iota(h^{(f)})^{-1}\tilde{w}^{-1})\cap K')(d_{(x)}f t_{(f)})^{-1}}^{\vee*}
(iTt_{\iota(d_{(x)}f t_{(f)})}^*)
(s_1^*(iTt_{\tilde{w}\iota(h^{(f)})}^*\alpha_\varepsilon)\cup
\ad^*\beta_\varepsilon).
$$
We have
$$
\iota(
d_{(x)}t_{(f)}K^\vee(\mathfrak{f})t_{(f)}^{-1}d_{(x)}^{-1})=
\iota(d_{(x)}t_{(f)})(j^{-1}(\tilde{w}\iota(h^{(f)})K\iota(h^{(f)})^{-1}\tilde{w}^{-1})\cap K')\iota(t_{(f)}^{-1}d_{(x)}^{-1}),
$$
and as $\iota$ fixes Haar measures, we conclude that
$$
\mu_{\lambda^\vee}(x+\mathfrak{f})=\iota_{\iota(t_{(f)}d_{(x)})(j^{-1}(\tilde{w}\iota(h^{(f)})K\iota(h^{(f)})^{-1}\tilde{w}^{-1})\cap K')\iota(d_{(x)}^{-1}t_{(f)}^{-1})}^{\vee*}
$$
$$
\sum_{\varepsilon}
\kappa_{\lambda^\vee}(\mathfrak{f})\cdot
\!\!\!\!\!\!\!\!\!\!\!\!\!\!\!\!\!\!\!\!\!\!\!\!\!\!\!\!\!\!\!\!\!\!\!\!\!\!\!\!\!\!\!\!\!\!\!\!\!\!\!\!\!\!\!\!\!\!\!\!\!\!
\int\limits_{\mathscr X_{n-1}(
\iota(t_{(f)}d_{(x)})(j^{-1}(\tilde{w}\iota(h^{(f)})K\iota(h^{(f)})^{-1}\tilde{w}^{-1})\cap K')\iota(d_{(x)}^{-1}t_{(f)}^{-1})
)[1]}
\!\!\!\!\!\!\!\!\!\!\!\!\!\!\!\!\!\!\!\!\!\!\!\!\!\!\!\!\!\!\!\!\!\!\!\!\!\!\!\!\!\!\!\!\!\!\!\!\!\!\!\!\!\!\!\!\!\!\!\!\!\!
(iTt_{\iota(t_{(f)}f d_{(x)})}^*)
(s_1^*(iTt_{\tilde{w}\iota(h^{(f)})}^*\alpha_\varepsilon)\cup
\ad^*\beta_\varepsilon).
$$
We have the matrix relation
\begin{equation}
\iota(t_{(f)}f)\cdot f^{n}=
w_{n-1}f^{-1}t_{(f)}^{-1}w_{n-1}\cdot f^{n}=
t_{(f)}f.
\label{eq:inverseft}
\end{equation}
Setting in the notation of Lemma \ref{lem:inverseh}
$$
k':=w_{n-1}j(dn')w_{n-1}\in I_{n-1}^{(m)},
$$
$$
k:=n\in I_n^{(m)},
$$
the identity \eqref{eq:inverseh} reads
$$
j(k')\cdot j(\iota(d_{(x)}))\tilde{w}\iota(h^{(f)})\cdot k=
j(f^n\cdot{\bf 1}_{n-1})\cdot d_{((-1)^{n-1}x^{-1})}h^{(f)}\cdot f^{1-n}.
$$
Therefore
$$
\alpha_\varepsilon=iTt_{k}^*\alpha_\varepsilon,
$$
and similarly for $\beta_\varepsilon$ and $k'^{-1}$, we deduce that
$$
\mu_{\lambda^\vee}(x+\mathfrak{f})=
\iota_{t_{(f)}d_{(x^\vee)}(j^{-1}(h^{(f)}K(h^{(f)})^{-1})\cap K')d_{(x^\vee)}^{-1}t_{(f)}^{-1}}^{\vee*}
$$
$$
\sum_{\varepsilon}
\kappa_{\lambda^\vee}(\mathfrak{f})\cdot
\!\!\!\!\!\!\!\!\!\!\!\!\!\!\!\!\!\!\!\!\!\!\!\!\!\!\!\!\!\!\!\!\!\!\!\!\!\!\!\!\!\!\!\!\!\!\!\!\!\!\!\!\!\!\!\!\!
\int\limits_{\mathscr X_{n-1}(
t_{(f)}d_{(x^\vee)}(j^{-1}(h^{(f)}K(h^{(f)})^{-1})\cap K')d_{(x^\vee)}^{-1}t_{(f)}^{-1}
)[1]}
\!\!\!\!\!\!\!\!\!\!\!\!\!\!\!\!\!\!\!\!\!\!\!\!\!\!\!\!\!\!\!\!\!\!\!\!\!\!\!\!\!\!\!\!\!\!\!\!\!\!\!\!\!\!\!\!
(iTt_{\iota(t_{(f)}f)f^{n} d_{(x^\vee)}}^*)
(s_1^*(iTt_{h^{(f)}f^{1-n}}^*\alpha_\varepsilon)\cup
\ad^*iTt_{f^{-n}}^*\beta_\varepsilon).
$$
By the matrix relation \eqref{eq:inverseft} and the definition of the Hecke operators $T_{n}\otimes 1$ and $1\otimes T_{n-1}$, relation \eqref{eq:inversekappa} shows the claim.
\end{proof}

\providecommand{\bysame}{\leavevmode\hbox to3em{\hrulefill}\thinspace}

$$
\underline{\;\;\;\;\;\;\;\;\;\;\;\;\;\;\;\;\;\;\;\;\;\;\;\;\;\;\;\;\;\;}
$$\ \\
Karlsruher Institut f\"ur Technologie, Fakult\"at f\"ur Mathematik, Institut f\"ur Algebra und Geometrie, Kaiserstra\ss{}e 89-93, 76133 Karlsruhe, Germany.\\
{januszewski@kit.edu}

\end{document}